\theoremstyle{definition}
\newtheorem{Def}{Definition}[section]
\newtheorem{Exa}[Def]{Examples}
\newtheorem{Rem}[Def]{Remark}
\theoremstyle{plain}
\newtheorem{Thm}[Def]{Theorem}
\newtheorem{Lem}[Def]{Lemma}
\newtheorem{Pro}[Def]{Proposition}
\newtheorem{Cor}[Def]{Corollary}
\newtheorem*{CBC/N}{The relative coarse Baum-Connes/Novikov Conjecture}
\newtheorem*{MCBC/N}{The maximal relative coarse Baum-Connes/Novikov Conjecture}
\newtheorem*{con}{Conjecture}
\newenvironment{cThm}[1]
  {\innercThm}
  {\endinnercThm}
\newenvironment{cCor}[1]
  {\innercCor}
  {\endinnercCor}
\newenvironment{cDef}[1]
  {\innercDef}
  {\endinnercDef}
\def\Box{{\bf Box}}\def\t{\tau}
\def\IN{\mathbb N}\def\IR{\mathbb R}\def\IC{\mathbb C}\def\IZ{\mathbb Z}
\def\A{\mathcal A}\def\C{\mathcal C}\def\B{\mathcal B}\def\K{\mathcal K}\def\L{\mathcal L}\def\H{\mathcal H}\def\S{\mathcal S}\def\D{\mathcal D}
\def\supp{\textup{supp}}
\def\prop{\textup{Prop}}
\def\Cl{\textup{Cliff}_{\IC}}
\def\ox{\otimes}
\def\wh{\widehat}
\def\wt{\widetilde}
\def\ox{\otimes}
\def\Ga{\Gamma}
\def\ICu{\mathbb C_{Y,\infty}}
\def\CauL{C^*_{L,\max,Y,\infty}}
\def\Cau{C^*_{\max,Y,\infty}}
\author[L.~Guo]{Liang Guo}
\address[L.~Guo]{Shanghai Institute for Mathematics and Interdisciplinary Sciences, Shanghai, 200433, P.~R.~China}
\email{liangguo@simis.cn}
\author[Q.~Wang]{Qin Wang}
\address[Q.~Wang]{Research Center for Operator Algebras, School of Mathematical Sciences, East China Normal University, Shanghai, 200241, P.~R.~China.}
\email{qwang@math.ecnu.edu.cn}
\author[C.~Zhang]{Chen Zhang}
\address[C.~Zhang]{Research Center for Operator Algebras, School of Mathematical Sciences, East China Normal University, Shanghai, 200241, P.~R.~China.}
\email{52275500018@stu.ecnu.edu.cn}
\title[Relative higher index theory and positive scalar curvature at infinity]{Relative higher index theory on quotients of Roe algebras and positive scalar curvature at infinity}
\date{\today}
\begin{document}

\begin{abstract}
In this paper, we employ quotients of Roe algebras as \emph{index containers} for elliptic differential operators to study the existence problem of Riemannian metrics with positive scalar curvature on non-compact complete Riemannian manifolds. The non-vanishing of such an index locates the precise direction at infinity of the obstructions to positive scalar curvature, and may be viewed as a refinement of the positive scalar curvature problem.
To achieve this, we formulate the \emph{relative coarse Baum-Connes conjecture} and the \emph{relative coarse Novikov conjecture}, together with their maximal versions, for general metric spaces as a program to compute the $K$-theory of the quotients of the Roe algebras relative to specific ideals. We show that if the metric space admits a \emph{relative fibred coarse embedding} into Hilbert space or an $\ell^p$-space, certain cases of these conjectures can be verified, which yield obstructions to the existence of uniformly positive scalar curvature metrics in specified directions at infinity.
As an application, we prove that the \emph{maximal coarse Baum-Connes conjecture} holds for finite products of certain expander graphs that fail to admit fibred coarse embeddings into Hilbert space.
%In this paper, we investigate the problem of the existence of Riemannian metrics with positive scalar curvature on non-compact complete Riemannian manifolds in terms of their behavior at infinity. The directions of this behavior can be specified by using the quotients of the Roe algebra by various ideals. To achieve this, we formulate the relative coarse Baum-Connes conjecture and the relative coarse Novikov conjecture, along with their maximal counterparts, for a general metric space relative to an ideal of the Roe algebra of the space. It turns out that some of these conjectures can be proven to be true if the metric space admits a relative version of fibered coarse embedding in Hilbert space. These results can be applied to establish the non-existence of a Riemannian metric with positive scalar curvature that is uniformly positive in certain directions at infinity. As a byproduct of our findings, we show that the maximal coarse Baum-Connes conjecture holds for finite products of specific sequences of expander graphs, which may not fibred coarsely embed into Hilbert space.
\end{abstract}

\maketitle

\tableofcontents

\section{Introduction}\label{sec1}

The question of which complete manifolds admit a Riemannian metric of positive scalar curvature (PSC) stands as one of the most fundamental problems in modern differential geometry. This inquiry forms a crucial bridge between the local, analytic nature of curvature and the global, invariant properties of topology, revealing deep constraints that topology imposes upon the fundamental shape of a space. Its significance extends beyond pure mathematics, touching upon the very fabric of theoretical physics.

Obstructions to the existence of metrics with positive scalar curvature often arise from the topological invariants of the manifold, and a very early obstruction comes from the Lichnerowicz formula and the Atiyah-Singer index theorem. Let $M$ be a compact spin manifold and $D$ the Dirac operator on $M$. The Atiyah-Singer index theorem \cite{ASI1968} establishes that $D$ is a Fredholm operator with $\text{Ind}(D) = \int_M \widehat{A}(M)$. According to the Lichnerowicz formula
\[D^2 = \nabla^*\nabla + \frac{\kappa}{4},\]
where $\kappa$ is the scalar curvature function of the Riemannian manifold $M$, if $\kappa$ is uniformly positive on $M$, then $D$ becomes an invertible operator whose index must be $0$. The non-vanishing of the $\wh A$-genus, as a topological invariant, thus provides an obstruction to the existence of positive scalar curvature.

M. Gromov and H. Lawson proved in \cite{GroLaw1983} that for a non-compact manifold $M$, if there exists a compact subset $Y \subseteq M$ and $\varepsilon_0 > 0$ such that $\kappa|_{Y^c} > \varepsilon_0 > 0$, then $D$ is still a Fredholm operator. But in general, when $M$ is non-compact, $D$ fails to be a Fredholm operator. In \cite{Roe1988}, J. Roe introduced a $C^*$-algebra $C^*(M)$ associated with $M$, later called the Roe algebra of $M$, and proved that $D$ becomes invertible modulo this algebra. Consequently, the ``index'' of $D$ can be interpreted as an element in $K_*(C^*(M))$, which is also called the \emph{higher index} of $D$. %The non-vanishing of higher indices also provides an obstruction to the existence of the positive scalar curvature.

From the relationship between the Dirac operator and the scalar curvature $\kappa$, the positivity of $\kappa$ typically implies the vanishing of the Dirac operator's index. Therefore, Gromov-Lawson's result can be interpreted as showing that the index information becomes concentrated on the subset $Y$, and the compactness of $Y$ ensures that $D$ is Fredholm.

This phenomenon has been further extended to higher indices. For example, in \cite{XY2014}, Z.~Xie and G.~Yu proved that if $\Ga$ is a countable discrete group with a proper isometric action on a Riemannian manifold $M$, and there exists a $\Ga$-compact subset $Y\subseteq M$ such that the scalar curvature of $M$ is uniformly positive outside $Y$, then the index of $D$ is also concentrated on $Y$, making it an element in $K_*(C^*_r\Ga)$. For the coarse setting, it is also point out in \cite{EW2025} by A.~Engle and C.~Wulff recently if there exists a subset $Y \subset M$ and $\varepsilon_0 > 0$ such that $\kappa_g|_{Y^c} > \varepsilon_0 > 0$, then the index of the Dirac operator $D$ can be interpreted as an element in $K_*(C^*(M, Y))\cong K_*(C^*(Y))$, where $C^*(M, Y)$ is the geometric ideal generated by $Y$. This idea is quite natural since one can expand $Y$ into an $R$-open neighborhood $\text{Pen}(Y, R)$ and $M$ also has positive scalar curvature in $\text{Pen}(Y, R)^c$. Due to the coarse invariance property of Roe algebras, $\text{Pen}(Y, R)$ is coarsely equivalent to $Y$ itself. Therefore, taking the index in the $K$-theory of $C^*(Y)$ and $C^*(\text{Pen}(Y, R))$ is essentially the same. The geometric ideal $C^*(M,Y)$ is defined as $C^*(M,Y) = \lim_{R\to\infty} C^*(\text{Pen}(Y,R))$, which contains all information about subspaces of $M$ that is coarsely equivalent to $Y$, and its $K$-theory is the same as $C^*(Y)$. Hence, interpreting the index as an element of $K_*(C^*(M, Y))$ is actually a more natural choice.
For more recent progress on using the $K$-theory of ideals of Roe algebras as the index containers, see \cite{Roe2016, BL2024, HdK2025}.

%Due to the coarse invariance property of Roe algebras, $\text{Pen}(Y, R)$ is coarsely equivalent to $Y$ itself, hence the interpretation in $C^*(Y)$. Moreover, since $C^*(M,Y) = \lim_{R\to\infty} C^*(\text{Pen}(Y,R))$, the interpretation in $C^*(M,Y)$ is indeed more natural.

These results can be summarized by the principle: \emph{Regions with positive scalar curvature do not carry Dirac index information}. Here we need to briefly explain what we mean by ``a region does not carry Dirac index information''. Consider the following commutative diagram:
\[\begin{tikzcd}
                          & {[D]\in K_*(M)} \arrow[ld, "\text{Ind}_Y"'] \arrow[d, "\text{Ind}"'] \arrow[rd,"\text{Ind}_{Y,\infty}"] &                       \\
{K_*(C^*(M,Y))} \arrow[r, "i_*"] & K_*(C^*(M)) \arrow[r, "\pi_*"]                           & {K_*(C^*(M)/C^*(M,Y))} 
\end{tikzcd}\]
By Roe's work, $\text{Ind}(D)\in K_*(C^*(M))$. And by \cite{EW2025}, we have that the index of $[D]$ actually comes from $K_*(C^*(M,Y))$, thus
$$\text{Ind}(D)=i_*(\text{Ind}_Y(D)),$$
where $\text{Ind}_Y(D)$ is its index in $K_*(C^*(M,Y))$. Since the lower row is exact, we conclude that
$$\text{Ind}_{Y,\infty}(D)=\pi_*(\text{Ind}(D))=\pi_*\circ i_*(\text{Ind}_Y(D))=0.$$
Here, $\text{Ind}_{Y,\infty}(D)$ is a \emph{relative version} of coarse index of $D$. Thus, we say the Dirac index information of $D$ is concentrated on a neighborhood of $Y$ if and only if $\text{Ind}_{Y,\infty}(D)=0$. From the above discussion, we conclude that the non-vanishing of $\text{Ind}_{Y,\infty}(D)$ provides an obstruction to the existence of positive scalar curvature on $Y^c$, thus an obstruction to the existence of positive scalar curvature on $M$. This represents a refined version of the positive scalar curvature existence problem.

Inspired by the above discussion, in this paper, we will employ the quotients of the Roe algebra as \emph{index containers} and study higher index theory for general non-compact metric spaces. This approach has two main advantages: First, the non-vanishing of such an index not only provides an obstruction to the existence of PSC metrics, but also gives a more refined characterization of where exactly this obstruction occurs, serving as a refinement of the original positive scalar curvature problem. On the other hand, for concrete index computations, the quotient index is often easier to compute than the original higher index. For operators whose behavior is difficult to capture directly, we can always quotient out the ideal they generate, after which the $K$-group of the remaining quotient algebra typically becomes much easier to compute.

For a metric space $X$ with bounded geometry and a subset $Y$, we consider a rather special quotient by modulo the \emph{ghostly ideal} associated to this subset, which was introduced in \cite{WZ2023}. The reason for selecting this particular ideal is that we can obtain a clearer view of the asymptotic structure of $Y^c$. Within the ghostly ideal generated by $Y$, there exist ghost operators that vanish at infinity in $Y^c$ yet remain non-zero in the quotient algebra modulo the geometric ideal. Quotienting by the ghostly ideal allows us to better focus on the structure of $Y^c$ at infinity.

As a complement to the above reasoning, we prove the following result:

\begin{cThm}{A}[Theorem \ref{pro: vanishing of the boundary index}]
Let $M$ be a spin, Riemannian manifold with bounded geometry and $Y\subseteq M$ a subspace of $M$. If there exists $\varepsilon_0>0$ such that
$$\liminf_{R\to\infty, x\in \text{Pen}(Y,R)^c}\kappa(x)\geq\varepsilon_0,$$
then for any ideal $I$ containing the geomertic ideal $C^*(M, Y)$, the relative higher index of $D$ with respect to $I$, determined by the composition of the following maps:
$$\text{Ind}_{I}:K_*(M)\xrightarrow{\text{Ind}} K_*(C^*(M))\xrightarrow{\pi_*}K_*(C^*(M)/I),$$
must be $0$, i.e., $\textup{Ind}_{I}([D])=0$.
\end{cThm}

Since ghostly ideal contains the geometric ideal, we have that the quotient algebra obtained by modulo ghostly ideals can still serve as an obstruction to positive scalar curvature. It is worth mentioning that in \cite{EW2025}, A.~Engel and C.~Wulff also utilized the vanishing of indices in quotient algebras to derive obstructions to positive scalar curvature of a submanifold, and their approach primarily involved quotienting by geometric ideals. %We shall call the quotient algebra of the Roe algebra by the ghostly ideal generated by $Y$ the \emph{Roe algebra at infinity of $X$ relative to $Y$}. 

\begin{cDef}{B}[Definition \ref{def: relative Roe algebras}]
The \emph{relative Roe algebra at infinity of $X$ w.r.t. $Y$} is defined to be the quotient algebra of the Roe algebra by the ghostly ideal generated by $Y$. In notation,
$$C^*_{Y,\infty}(X)=C^*(X)/I_G(Y).$$
\end{cDef}

We also introduce the maximal version of the relative Roe algebra. However, there are essential differences between the definitions of the maximal version and the reduced version. The most direct one is that since the maximal representation space is not so concrete, we often cannot write elements of the maximal Roe algebra as $X$-by-$X$ matrices. Therefore, defining ghost ideals in the maximal Roe algebra is not a wise choice. However, the maximal norm is convenient in that its universal property can avoid many troubles. Thus, in the maximal Roe algebra, we have the following definition:

\begin{cDef}{C}[Definition \ref{def: relative Roe algebras}]
The \emph{maximal relative Roe algebra at infinity of $X$ w.r.t. $Y$} is defined to be the quotient algebra modulo the geometric ideal generated by $Y$, i.e.,
$$C^*_{\max,Y,\infty}(X)=C^*_{\max}(X)/C^*_{\max}(X,Y).$$
\end{cDef}

%Thus, in the maximal Roe algebra, we only need to define the \emph{maximal relative Roe algebra} to be the quotient algebra modulo the maximal geometric ideal. One can similarly prove that non-zero higher index in the $K$-theory of the maximal relative Roe algebra also provides obstructions to PSC metrics.

To compute the $K$-theory group of (maximal) relative Roe algebras, we formulate the \textbf{relative coarse Baum-Connes conjecture for $(X,Y)$} and \textbf{relative coarse Novikov conjecture for $(X,Y)$}, together with their maximal counterparts. These assert that the assembly maps
\[\mu_{Y,\infty}: \lim_{d\to\infty} K_{*}(C^{*}_{L,Y,\infty}(P_{d}(X))) \to K_{*}(C^{*}_{Y,\infty}(X))\]
and
\[\mu_{\max,Y,\infty}: \lim_{d\to\infty} K_{*}(C^{*}_{L,\max,Y,\infty}(P_{d}(X))) \to K_{*}(C^{*}_{\max,Y,\infty}(X))\]
are isomorphisms (respectively, injective). Here, the left-hand side $C^*_{L, Y,\infty}(P_d(X))$ is the localization algebra at infinity of $P_d(X)$ relative to $Y$. Its $K$-theory is local so that one can use the Mayer-Vietoris argument to compute it. This provides a method to compute the quotient index of the Dirac operator. We prove that these two conjectures imply the \emph{Gromov-Lawson conjecture}, which claims that uniformly contractible manifolds do not admit a Riemannian metric with uniformly positive scalar curvature:

\begin{cThm}{D}[Theorem \ref{thm: FCE to PSC infty}]
Let $M$ be a uniformly contractible Riemannian manifold with bounded geometry and $Y\subseteq M$ a subspace of $M$. If the (maximal) relative coarse Novikov conjecture holds for $(M, Y)$, then
$$\liminf_{R\to\infty, x\in \text{Pen}(Y,R)^c}\kappa(x)$$
can never be positive. As a result, $M$ can never admit a uniformly positive scalar curvature metric.
\end{cThm}

As for the conjecture, we prove the following theorem:

\begin{cThm}{E}[Theorem \ref{thm: RCBC for RFCE spaces}]\label{thm: main theorem}
If $X$ admits a relative fibred coarse embedding into a Hilbert space with respect to $Y$, then both the relative coarse Baum-Connes conjecture for $(X,Y)$ and its maximal version hold, simultaneously.

Consequently, the canonical quotient map:
$$\pi_*: K_*(C^*_{\max,Y,\infty}(X))\to K_*(C^*_{Y,\infty}(X))$$
is an isomorphism.
%both the relative coarse Baum-Connes conjecture for $(X,Y)$ holds. the maximal relative coarse Baum-Connes conjecture of $(X,Y)$ holds.
\end{cThm}

The concept of \emph{fibred coarse embedding} was introduced by Chen, Wang, and Yu in \cite{CWY2013}, extending Gromov's notion of coarse embedding into Hilbert spaces. In this paper, we propose a concept of \emph{relative fibred coarse embedding} (Definition \ref{def: partial FCE}) as a natural generalization of this framework. This new approach focuses specifically on the fibred coarse embeddability of certain substructures rather than the entire space.

To prove Theorem~\ref{thm: main theorem}, we develop a conceptual framework for adapting the Dirac-dual-Dirac method to this setting. In essence, the proof requires the following two key ingredients:
\begin{itemize}
\item The existence of a \emph{coarsely proper algebra w.r.t. the relative fibred coarse embedding}, and
\item A family of \emph{uniformly flat Bott generators} on this algebra.
\end{itemize}
Under these conditions, we can establish that the relative coarse assembly map is injective. Furthermore, if the Bott generators admit inverses at the level of $K$-theory, then the assembly map becomes an isomorphism. As a direct application of this method, we also prove the following result concerning the relative coarse Novikov conjecture:

\begin{cThm}{F}[Corollary \ref{cor: RCNC for FCE into lp and Hadamard}]
Let $X$ be a metric space that is sparse relative to a subspace $Y\subseteq X$. If $X$ admits a relative fibred coarse embedding into an $\ell^p$-space (with $p\in[1,\infty)$) or a Hadamard manifold with respect to $Y$, then the relative coarse Novikov conjecture for $(X,Y)$ holds.
\end{cThm}

Here, a space being \emph{sparse with respect to $Y$} is a technical condition that roughly means the space is a coarse disjoint union of a sequence of spaces, and the subspace $Y$ embeds as a net into the disjoint union of the first finitely many components.

%In fact, we also prove the following result:

%\begin{cThm}{G}[Theorem \ref{thm: groupoid approach for PFCE}]
%Let $X$ be a metric space with bounded geometry that is sparse with respect to a subset $Y\subseteq X$. If $X$ admits a relative fibred coarse embedding into Hilbert space w.r.t. $Y$, then the groupoid $G(X)_{\partial_{\beta}X\backslash U_Y}$ is a-T-menable.
%\end{cThm}

%For the special class of spaces mentioned above, we can prove that the relative fibred coarse embedding into Hilbert space is equivalent to the a-T-menability of the restriction of the coarse groupoid to invariant closed subsets. This provides an analytic characterization of relative fibred coarse embeddings. Let $U_Y \subseteq \beta X$ be the invariant open set generated by the subspace $Y$, then the relative coarse Baum-Connes conjecture is actually equivalent to the groupoid Baum-Connes conjecture for the coarse groupoid on the complement $U_Y^c$. Using Tu's result, we can give a new proof of the relative coarse Baum-Connes conjecture for this special class of spaces.

The relative coarse Baum-Connes/Novikov conjecture can also be used to solve the global coarse Baum-Connes/Novikov conjecture for the whole space $X$:

\begin{cThm}{H}[Theorem \ref{thm: CBC for RFCE}]
Let $X$ be a metric space with bounded geometry which is sparse with respect to $Y\subseteq X$. \begin{itemize}
\item[(1)] If $Y$ satisfies the maximal coarse Baum-Connes conjecture and $X$ admits a relative fibred coarse embedding into Hilbert space w.r.t. $Y$, then the maximal coarse Baum-Connes conjecture holds for $X$.
\item[(2)] If $Y$ satisfies the coarse Novikov conjecture and $X$ admits a relative fibred coarse embedding into an $\ell^p$-space (with $p\in[1,\infty)$) or a Hadamard manifold w.r.t. $Y$, then the coarse Novikov conjecture holds for $X$.
\end{itemize}\end{cThm}

In the above theorem, when deducing the global coarse Baum-Connes conjecture from the relative version, there are differences between the maximal and reduced versions. We can only derive the global conjecture from the relative one in the maximal version, while in the reduced version, we can only deduce the global Novikov conjecture from the relative Novikov conjecture - there is no corresponding surjectivity part.

As byproducts, we can use this to prove the maximal coarse Baum Connes conjecture for the products of a class of expander graphs:

\begin{cCor}{I}[Corollary \ref{cor: FCEH times FCEH}]\label{cor: 1-FCEH times FCEH}
Let $(X_i)_{i=1}^N$ be a finite family of bounded geometry metric spaces that admit a fibred coarse embedding into Hilbert space. Then the maximal coarse Baum-Connes conjecture holds for $\prod_{i=1}^NX_i$.
\end{cCor}

\begin{cCor}{J}[Corollary \ref{cor: CNC for FCEtimesFCE}]
For each $i\in\{1,\cdots, N\}$, assume $X_i$ is a sparse space with bounded geometry, i.e., $X_i=\bigsqcup_{n\in\IN}X_{i,n}$. If $X_i$ admits a fibred coarse embedding into Hilbert space (or a Hadamard manifold, or an $\ell^p$-space with $1\leq p<\infty$). Then the coarse Novikov conjecture holds for $\prod_{i=1}^NX_i$.
\end{cCor}

Let $X=\bigsqcup_{n\in\IN}X_n$ and $Y=\bigsqcup_{m\in\IN}Y_m$ be coarse disjoint unions of finite spaces that admit fibred coarse embeddings into Hilbert space. It is proved in \cite{CWY2013} that the maximal coarse Baum-Connes conjecture holds for both $X$ and $Y$. However, one should notice that $X\times Y=(X_n\times Y_m)_{n,m\in\IN}$ may not admit a fibred coarse embedding into Hilbert space. Corollary \ref{cor: 1-FCEH times FCEH} provides many concrete examples that do not admit fibred coarse embedding into Hilbert space, while whose maximal coarse Baum-Connes conjecture still holds.

The paper is organized as follows: In Section 2, we briefly review the Roe algebra and the ideal structure of the Roe algebra. Using the ideal structure, we define the relative Roe algebra and prove that any quotients of the Roe algebra by an ideal containing a geometric ideal can provide a refined obstruction to positive scalar curvature.
In Section 3, we discuss the localization algebra at infinity and give a definition using exact sequences. With this, we formulate the relative (maximal) coarse Baum-Connes/Novikov conjecture and prove that any of these conjectures can imply the Gromov-Lawson conjecture.
In Section 4, we introduce the conceptual definition of coarsely proper algebras and use them to define the twisted coarse assembly map, proving that this map is an isomorphism. We conceptually introduce what the uniformly flat Bott generators are and prove Theorem \ref{thm: main theorem}. We also consider the relationship between groupoids and relative fibred coarse embeddings.
In Section 5, we explore applications of the relative coarse Baum-Connes/Novikov conjecture. Under certain conditions, it implies the global coarse Baum-Connes/Novikov conjecture for the whole space $X$. As a corollary, we prove that the product of finitely many spaces that admit fibred coarse embeddings into Hilbert space satisfies the maximal coarse Baum-Connes conjecture. We also discuss the sensitivity of choice of subsets.

\subsection*{Acknowledgement}
L.~Guo is partially supported by the Chinese Postdoctoral Science Foundation (No.~2025M773059).
Q.~Wang is partially supported by NSFC (No.~12571135)

\section{Quotients of Roe algebras and obstruction to PSC metrics}\label{sec: Roe algebras and their ideals}

In this section, we present some background on Roe algebras. Since our goal is to develop an index theory using the $K$-theory of quotients of Roe algebras as index containers, we will consequently discuss the dual picture by reviewing key results about the ideal structure of Roe algebras.

\subsection{Roe algebras}\label{section 2.1}

We will specify the notations in this subsection. The reader is referred to \cite{Roe2003} for background information on metric geometry and coarse geometry. 

In this paper, we shall use two distinct notations, $M$ and $X$, to denote proper metric spaces, where $X$ represents a \emph{countable discrete metric space}, and $M$ denotes a general proper metric space. Recall that a metric space $M$ is called \emph{proper} if every closed, bounded subset of $M$ is compact. For $\delta>0$, a \emph{$\delta$-net} $X$ of $M$ is a subset $X\subseteq M$ such that $d(x_1,x_2)\geq\delta$ for any $x_1,x_2\in X$ and $d(m, X)\leq \delta$ for any $m\in M$. We say $M$ has \emph{bounded geometry} if there exists a $\delta$-net $X$ such that $X$ has bounded geometry under the subspace metric, i.e.,
$$\sup_{x\in X}\#\left(B_M(x,R)\cap X\right)<\infty$$
for any $R\geq 0$.  Fix a $\delta$-net $X\subseteq M$. By the axiom of choice, one can take a Borel cover $\{B_x\}_{x\in X}$ of $M$ such that $x\in B_x\subseteq B_M(x,\delta)$ for each $x\in X$.

Let $A$ be a $C^*$-algebra. Take $Z_M\subseteq M$ as a countable \emph{dense} subset of $M$ and $\H=\ell^2(\IN)$ an infinite-dimensional, separable Hilbert space. Let 
$$\H_{M,A}=\ell^2(Z_M)\ox\H\ox A$$ be the Hilbert $A$-module whose $A$-valued inner product is defined by
$$\langle \delta_{z_1}\ox\delta_{n_1}\ox a_1, \delta_{z_2}\ox\delta_{n_2}\ox a_2\rangle=\langle\delta_{z_1},\delta_{z_2}\rangle\cdot\langle\delta_{n_1},\delta_{n_2}\rangle\cdot a_1^*a_2,$$
where $z_1,z_2\in Z_M$, $n_1,n_2\in \IN$ and $a_1,a_2\in A$. Denote $\L_A(\H_{M,A})$ the algebra of all adjointable $A$-module homomorphism on $\H_{M,A}$, it contains $\K_A(\H_{M,A})$, the algebra of all compact $A$-module homomorphism, as an ideal.

 Note that $\H_{M,A}$ admits a canonical representation of the algebra $B(M)$ of all bounded Borel functions on $M$, which is given by
$$f\cdot(\delta_{z_1}\ox\delta_{n_1}\ox a)=f(z_1)\cdot \delta_{z_1}\ox\delta_{n_1}\ox a,\quad\text{ for all }f\in B(M).$$
For each $x\in X$, we denote $\chi_{B_x}$ the characteristic function over $B_x$. For any $T\in\L_A(\H_{M,A})$ and $x,y\in X$, we can define the $(x,y)$-matrix entry of $T$ to be $\chi_{B_x}T\chi_{B_y}: \H_{M,A}\to \H_{M,A}$. In this way, we may view $T$ as a $X$-by-$X$ matrix $T=(T(x,y))_{x,y\in X}$.

\begin{Def}
Let $M$ be a proper metric space, $X\subseteq M$ a $\delta$-net with bounded geometry.
The \emph{algebraic Roe algebra with coefficient $A$}, denoted by $\IC[M,A]$, is defined to be the $*$-subalgebra of $\L_A(\H_{M,A})$ of matrics $T$ satisfying\begin{itemize}
\item[(1)] $T(x,y)\in\K_A(\H_{M,A})$ is compact for any $x,y\in X$;
\item[(2)] there exists $R>0$ such that $T(x,y)=0$ whenever $d(x,y)\geq R$.
\end{itemize}
For an element $T\in\IC[M,A]$, the \emph{propagation} of $T$ is defined to be
$$\prop(T)=\sup\{d(z_1,z_2)\mid \delta_{z_1}T\delta_{z_2}\ne 0, z_1,z_2\in Z_M\}.$$

The \emph{Roe algebra with coefficient $A$}, denoted by $C^*(M,A)$, is defined to be the $C^*$-completion of $\IC[M,A]$ in $\L_A(\H_{M,A})$, and the \emph{maximal Roe algebra with coefficient $A$}, denoted by $C^*_{\max}(X,A)$ is defined to be the $\IC[M,A]$ under the norm
$$\|T\|_{\max}=\sup\{\|\phi(T)\|\mid \phi:\IC[M,A]\to\B(\H_{\phi})\text{ is a $*$-representation}\}.$$
\end{Def}

It is clear that $\IC[M, A]$ is independent of the choice of the $\delta$-net $X$. In this paper, we shall mainly consider the case when $A=\IC$ (the non-trivial coefficient case will be considered when we define the twisted algebra in Section \ref{sec: coarsely proper algebra for FCE}). In this case, we denote $\H_M=\H_{M,\IC}$, and we shall call it the \emph{Roe algebra of $M$}, and denote it by $\IC[M]$, $C^*(M)$, $C^*_{\max}(M)$ for simplicity. Moreover, notice that
$$\big|\prop(T)-\sup\{d(x,y)\mid T(x,y)\ne 0\}\big|\leq 2\delta.$$

\begin{Def}\label{def: K-homology}
The \emph{algebraic localization algebra}, denoted by $\IC_L[M]$, is defined to be the $*$-algebra of all bounded, uniformly continuous functions $g:\IR_+=[0,\infty)\to\IC[M]$ with $\prop(g(t))\to 0$ as $t\to\infty$.

We then define the \emph{localization algebra} $C^*_L(M)$ associated with the norm $\|g\|=\sup_{t\in\IR_+}\|g(t)\|_{\B(\H_M)}$ and the \emph{maximal localization algebra} $C^*_{L,\max}(M)$ associated with the norm $\|g\|_{\max}=\sup_{t\in\IR_+}\|g(t)\|_{\max}$. The \emph{$K$-homology} of $M$ is defined to be
$$K_*(M):=K_*(C^*_L(M))\cong K_*(C^*_{L,\max}(M)).$$
\end{Def}

The localization algebra approach to $K$-homology is first introduced by G.~Yu in \cite{Yu1997}. It is known that the $K$-theory of the maximal localization algebra is isomorphic to that of the reduced localization algebra. They both are $K$-homology of $M$. The reader is also referred to \cite{CWY2013} and \cite{DGWY2025} for some relevant discussion.

The canonical evaluation map $\IC_L[M]\to\IC[M]$ defined by $g\mapsto g(0)$ can be extended to $C^*$-homomorphisms
$$C^*_L(M)\to C^*(M)\quad\text{and}\quad C^*_{L,\max}(M)\to C^*_{\max}(M).$$
These two maps further induce group homomorphisms on the level of $K$-theory
$$\text{Ind}: K_*(M)\to K_*(C^*(M))\quad\text{and}\quad \text{Ind}_{\max}: K_*(M)\to K_*(C^*_{\max}(M)),$$
which are called the \emph{higher index maps}.

In this paper, we shall mainly consider a countable discrete metric space $X$ with \emph{bounded geometry}, which means
$$\sup_{x\in X}\# B(x,R)<\infty$$
for any $R\geq 0$. For such a space, we denote $P_d(X)$ to be the \emph{Rips complex} of $X$ at scale $d$ equipped with \emph{semi-simplicial metric}, which is a proper metric space coarsely equivalent to $X$ for any $d\geq 0$ (see \cite[Section 7.2]{WY2020}).  Since $X$ is a net of $P_d(X)$, we can always view an element in $C^*(P_d(X))$ as an $X$-by-$X$ matrix as above. Moreover, Roe algebra is invariant under coarse equivalence. Thus, the higher index maps defined above lead to the \emph{assembly map} and the \emph{maximal assembly map} as $d$ tends to infinity
$$\mu:\lim_{d\to\infty}K_*(P_d(X))\to K_*(C^*(X))\quad\text{and}\quad \mu_{\max}:\lim_{d\to\infty}K_*(P_d(X))\to K_*(C^*_{\max}(X)).$$
Here comes the \emph{(maximal) coarse Baum-Connes/Novikov conjecture}.

\begin{con}
The coarse Baum-Connes conjecture (coarse Novikov conjecture, resp.) claims the assembly map $\mu$ is an isomorphism (injection, resp.).\\
The maximal coarse Baum-Connes conjecture (maximal coarse Novikov conjecture, resp.) claims the maximal assembly map $\mu_{\max}$ is an isomorphism (injection, resp.).

\end{con}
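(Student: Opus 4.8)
The final displayed statement is the \emph{formulation} of the (maximal) coarse Baum-Connes and coarse Novikov conjectures, not a theorem, so strictly speaking there is nothing to prove: the conjecture is open in general, and the reduced coarse Baum-Connes conjecture is known to fail — for a coarse disjoint union of expander graphs the reduced assembly map $\mu$ is not surjective. What one can sketch is the standard program for \emph{verifying} these assertions for suitable classes of spaces $X$, which is exactly what the later sections of this paper carry out in the relative setting; here I describe the non-relative template.

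The plan is to run the Dirac-dual-Dirac method in the localization-algebra picture. Since $K_*(P_d(X)) \cong K_*(C^*_L(P_d(X)))$ and $\mu$ is, in the limit $d\to\infty$, induced by the evaluation homomorphism $C^*_L(P_d(X)) \to C^*(P_d(X))$, $g\mapsto g(0)$, it suffices to understand this map on $K$-theory. One introduces an auxiliary \emph{twisted} Roe algebra, namely a Roe-type algebra with coefficients in the $C^*$-algebra $\mathcal{A}$ of a Hilbert space into which $X$ coarsely embeds — the Higson-Kasparov-Trout algebra built from $\mathcal{S}=C_0(\IR)$ and complex Clifford algebras — together with its localized version. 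A \emph{Bott} homomorphism $\beta$ maps the untwisted localization algebra into the twisted one, and a \emph{Dirac} homomorphism $\alpha$ goes back; it is precisely the coarse embedding that makes $\beta$ well defined at the level of $C^*$-algebras. A finite-dimensional Bott-periodicity computation, carried out piece by piece over a controlled cover of $P_d(X)$ and then assembled by a Mayer-Vietoris argument, shows that $\alpha_*\beta_*$ is the identity; this yields \emph{injectivity} of $\mu$, i.e.\ the coarse Novikov conjecture. To obtain surjectivity — the full coarse Baum-Connes conjecture — one must also produce a genuine inverse to the Bott element at the level of $K$-theory (a dual-Dirac element), typically by a cutting-and-pasting argument along a decomposition of $X$ (finite asymptotic dimension, or an exhaustion compatible with a fibred coarse embedding) that reduces the statement to bounded pieces, where it is clear.

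The main obstacle is the surjectivity step: there is in general no dual-Dirac element for the \emph{reduced} Roe algebra — this is exactly the mechanism behind the failure of the reduced conjecture for expanders — so one cannot expect $\mu$ to be an isomorphism without genuine geometric input such as coarse or fibred coarse embeddability. The \emph{maximal} version is more robust because the universal property of the maximal norm lets the twisted-algebra constructions and the Bott-Dirac comparison go through even when the underlying coarse groupoid is non-exact; the delicate point there is to keep every homomorphism and homotopy bounded in the maximal norm and to check that the twisted maximal assembly map is still an isomorphism. Carrying this out in a relative form, for a pair $(X,Y)$, is the contribution developed in the remaining sections.
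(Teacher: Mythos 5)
You correctly recognize that the displayed statement is merely the \emph{formulation} of the (maximal) coarse Baum-Connes/Novikov conjectures and carries no proof obligation; indeed the reduced conjecture is known to fail for expander graphs, as the paper itself exploits. Your sketch of the Dirac-dual-Dirac program via twisted Roe and localization algebras, with the maximal norm used to sidestep non-exactness, accurately matches the template the paper instantiates in the relative setting in Sections 4--5.
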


\subsection{Coarse groupoid and ideals of Roe algebras}

In \cite{STY2002}, Skandalis, Tu, and Yu introduced the notion of \emph{coarse groupoids} for discrete metric spaces with bounded geometry. They proved that the Roe algebra is in fact isomorphic to the crossed product algebra associated to the coarse groupoid. This work established a groupoid approach to the coarse Baum-Connes conjecture. Subsequent research (such as \cite{CW2004, CW2006}) has proved the utility of coarse groupoids in classifying ideals of Roe algebras. 

In this paper, we investigate quotients of Roe algebras, which are dual to ideals of Roe algebras. Naturally, our approach also employs coarse groupoids as a fundamental tool. The reader is referred to \cite{Ren1980, Tu2000} for more details on groupoid crossed product and the Baum-Connes conjecture for groupoids.

Let $X$ be a metric space with bounded geometry. For any $R\geq 0$, the \emph{$R$-diagonal} is defined by
$$\Delta_R=\{(x,y)\in X\times X\mid d(x,y)\leq R\}\subseteq X\times X$$
A subset of a $R$-diagonal is called a \emph{entourage}. The set of all entourages is called the coarse structure of $X$ associated with the metric. Then an operator $T=(T(x,y))_{x,y\in X}\in\mathbb{C}[X]$ has \emph{finite propagation} if and only if the non-zero entries in this matrix form an entourage. Denote by $\beta X$ the Stone-\v{C}ech compactification of $X$, which is the Gelfand spectrum of $\ell^{\infty}(X)$. The \emph{Stone-\v{C}ech boundary} is defined by $\partial_\beta X=\beta X\backslash X$. The \emph{coarse groupoid} of $X$, denoted by $G(X)$, is defined to be
$$G(X)=\bigcup_{R\geq 0}\overline{\Delta_R}\subseteq\beta(X\times X).$$
It was shown in \cite{STY2002} that the coarse groupoid $G(X)$ is locally compact, Hausdorff, $\acute{e}$tale and principal and the space $\beta X$ is exactly the unit space of $G(X)$. The projection onto the first (second, resp.) coordinate $p_1: X\times X\to X$, ($p_2: X\times X\to X$, resp.) extends continuously to the range map $r: G(X)\to\beta X$ (the source map $s: G(X)\to\beta X$, resp.). Readers can refer to \cite{STY2002} or \cite[Chapter 10]{Roe2003} for additional information about coarse groupoids.

The following proposition can be found in \cite[Lemma 4.4]{STY2002}.

\begin{Pro}[\cite{STY2002}]\label{Roe algebra isomorphic to groupoid}
There exists a canonical $*$-isomorphism
$$\theta: C_c(G(X),r^*(\ell^{\infty}(X,\K)))\to \IC[X]$$
which extends to $C^*$--somorphisms
$$\Theta: \ell^{\infty}(X,\K)\rtimes_r G(X)\rightarrow C^{*}(X)\quad\text{and}\quad \Theta_{\max}: \ell^{\infty}(X,\K)\rtimes G(X)\rightarrow C^{*}_{\max}(X).$$
\end{Pro}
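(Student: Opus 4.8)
The statement to prove is Proposition \ref{Roe algebra isomorphic to groupoid}, which asserts that there is a canonical $*$-isomorphism $\theta: C_c(G(X), r^*(\ell^\infty(X,\K))) \to \IC[X]$ extending to $C^*$-isomorphisms on both the reduced and maximal completions. This is essentially \cite[Lemma 4.4]{STY2002}, so the plan is to recall and assemble that argument.

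\medskip

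The plan is to first set up the correspondence at the algebraic level. An element of $C_c(G(X), r^*(\ell^\infty(X,\K)))$ is a compactly supported continuous section; since $G(X) = \bigcup_{R\geq 0}\overline{\Delta_R}$ and each $\overline{\Delta_R}$ is compact open, such a section is supported in some $\overline{\Delta_R}$. The key observation is that a continuous $\K$-valued function on $\overline{\Delta_R}$ is the continuous extension of a bounded function on $\Delta_R \subseteq X\times X$ (because $\overline{\Delta_R}$ is a quotient of a clopen subset of $\beta(X\times X)$ and bounded geometry makes $\Delta_R$ a disjoint union of finitely many ``partial translation'' graphs), and such a function is exactly the data of an $X$-by-$X$ matrix $(T(x,y))$ with $T(x,y)\in\K$, uniformly bounded, and vanishing for $d(x,y) > R$. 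This gives the vector-space isomorphism $\theta$; one then checks it intertwines the groupoid convolution product and involution with matrix multiplication and adjoint on $\IC[X]$. This is a direct computation using that $G(X)$ is étale and principal, so the Haar system is counting measure on the fibres and convolution becomes $(f*g)(x,y) = \sum_{z} f(x,z)g(z,y)$.

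\medskip

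Next I would pass to completions. For the maximal case, this is immediate from the universal properties: both $C^*_{\max}(X)$ and $\ell^\infty(X,\K)\rtimes G(X)$ are defined as completions of the same $*$-algebra (identified via $\theta$) with respect to the supremum over all $*$-representations, so $\theta$ extends to an isometric $*$-isomorphism $\Theta_{\max}$. For the reduced case, one must check that the reduced crossed product norm, coming from the regular representation of the groupoid on $L^2$ of the fibres with coefficients, agrees with the norm $\IC[X]$ inherits as operators on $\H_X = \ell^2(Z_X)\otimes\H$. Here one uses that the regular representation of $G(X)$ at the unit $x\in X$ realizes $\ell^\infty(X,\K)\rtimes_r G(X)$ on $\ell^2(s^{-1}(x))\otimes\K$, and since $G(X)$ is principal with $s^{-1}(x) \cong X$ (as sets, via $r$), this is unitarily equivalent to the standard Roe-algebra representation; taking the supremum over $x\in X$ (which is dense in $\beta X$) gives equality of norms. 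Therefore $\theta$ extends to $\Theta$.

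\medskip

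The main obstacle, and the only genuinely subtle point, is the correct identification of continuous $\K$-valued sections over $\overline{\Delta_R} \subseteq \beta(X\times X)$ with uniformly bounded matrix entries on $\Delta_R$ — in particular verifying that $\overline{\Delta_R}$ really is the Stone-\v{C}ech-type compactification of $\Delta_R$ relative to which bounded functions extend, and that continuity of the section forces the right compatibility. This is exactly where bounded geometry enters (it ensures $\Delta_R$ decomposes into finitely many graphs of partial bijections, each a clopen copy of a subset of $X$), and where one must be careful that the coefficient bundle $r^*(\ell^\infty(X,\K))$ restricts correctly. I would handle this by citing \cite{STY2002} for the structural facts about $G(X)$ and then spelling out the matrix dictionary; the rest is routine bookkeeping with convolution algebras. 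Since the statement is quoted verbatim from \cite[Lemma 4.4]{STY2002}, a proof at the level of a careful sketch with the reference suffices.
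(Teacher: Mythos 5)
Your sketch is correct and recapitulates the standard argument from \cite[Lemma 4.4]{STY2002}, which is exactly what the paper relies on (it cites the result and gives no proof of its own). The only caveats worth noting: the sections of $r^*(\ell^\infty(X,\K))$ over $\overline{\Delta_R}$ have fibers that are \emph{not} $\K$ at boundary points, so the clean identification with uniformly bounded matrices really does hinge on the decomposition of $\overline{\Delta_R}$ into clopen bisections $\overline{\Gamma_{t_i}} \cong \overline{D_i}$ (over which sections are literally $\ell^\infty(D_i,\K)$), as you anticipate; and for the reduced norm, the regular representation at any single $x\in X$ is already unitarily equivalent to the Roe-algebra representation (all $\pi_x$, $x\in X$, are mutually unitarily equivalent since $G(X)$ acts transitively on $X$), so the supremum over $X$ is a no-op and the content is that lower semicontinuity of $\omega\mapsto\|\pi_\omega(a)\|$ prevents boundary points from increasing the norm. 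Neither point is a gap.
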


This result provides another viewpoint on the Roe algebra based on coarse groupoids. From this perspective, X.~Chen and Q.~Wang conducted extensive research on the ideals of (uniform) Roe algebras, see \cite{CW2004, CW2006}. It turns out that an ideal of (uniform) Roe algebra must correspond to an invariant open set of $\beta X$.Recall that a subset $U\subseteq \beta X$ is $G(X)$-\emph{invariant}, (or invariant for simplicity) if, for any element $\gamma\in G(X)$, it holds that $r(\gamma)\in U$ if and only if $s(\gamma)\in U$.  However, for a given invariant open set, the corresponding ideal may not be unique. For the purposes of this paper, we will primarily focus on ideals associated with invariant open sets generated by a subspace. Moreover, we restrict our consideration to only two types of ideals on such invariant open sets: \emph{geometric ideals} and \emph{ghostly ideals}. 

Recall that a subset $U\subseteq \beta X$ is \emph{invariant} if for any element $\gamma\in G(X)$, one has that $r(\gamma)\in U$ if and only if $s(\gamma)\in U$. Notice that if $U$ is invariant in $\beta X$, the complement $U^c$ is also invariant. Say $Y$ is a subset of $X$, the invariant open set generated by $Y$ is defined to be
$$U_Y=\bigcup_{R\geq 0}\overline{\text{Pen}(Y,R)}\subseteq \beta X.$$
When $Y$ is bounded, the invariant open set generated by $Y$ is exactly $X$. Otherwise, one has that $X\subseteq U_Y\subseteq \beta X$. As a result, $U_Y^c$ is an invariant closed subset of $\beta X$ which is contained in $\partial_\beta X$. For an invariant subset $U\subseteq\beta X$, the restriction of $G(X)$ on $U$ is defined to be
$$G(X)_U=r^{-1}(U),$$
which is a subgroupoid of $G(X)$.

The definition of geometric ideal in Roe algebras is first introduced in \cite{HRY1993}.

\begin{Def}\label{geometric ideal}
The \emph{algebraic geometric ideal} of $\IC[X]$ generated by $Y$, denoted by $\IC[X,Y]$, is the subalgebra of $\IC[X]$ consisting of all operators $T$ whose support is contained in $\text{Pen}(Y,R)\times \text{Pen}(Y,R)$ for some $R>0$.

The operator norm closure of $\IC[X,Y]$ within $C^{*}(X)$ (resp. $C^*_{\max}(X)$) is called \emph{the (maximal) geometric ideal generated by $Y$} and denoted by $C^{*}(X,Y)$ (resp. $C^{*}_{\max}(X,Y)$). 
\end{Def}

The geometric ideal can also be realized in the language of the crossed product of groupoids. For fixed $Y\subseteq X$, we define
$$A_Y=\left\{\xi\in\ell^{\infty}(X,\K) \ \Big|\  \lim_{R\to\infty}\sup_{x\in \text{Pen}(Y,R)^c}\|\xi(x)\|=0\right\}.$$
It is direct to see that $A_Y$ is an ideal of $\ell^\infty(X,\K)$. One has a groupoid description of geometric ideal, which is proved in the uniform Roe algebra setting in \cite{CW2004}. With a similar argument, we also have the crossed product counterpart as the following proposition: 

\begin{Pro}[\cite{CW2004}]\label{geometric ideal and reduced groupoid algebra}
With the definition given above, the $*$-isomorphism $\theta$ in Proposition \ref{Roe algebra isomorphic to groupoid} restricts to a $*$-isomorphism 
$$\theta_Y: C_c(G(X)|_{U_Y},r^*(A_Y))\to \IC[X,Y]$$
which extends to $C^*$-isomorphisms
$$\Theta_Y: A_Y\rtimes_r G(X)|_{U_Y}\to C^*(X,Y)\quad\text{and}\quad \Theta_{Y,\max}: A_Y\rtimes G(X)|_{U_Y}\to C^*_{\max}(X,Y).$$
\qed
\end{Pro}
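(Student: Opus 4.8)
## Proof Proposal for Proposition \ref{geometric ideal and reduced groupoid algebra}

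The plan is to transport the known uniform Roe algebra statement from \cite{CW2004} to the Roe algebra setting by carefully checking that the isomorphism $\theta$ of Proposition \ref{Roe algebra isomorphic to groupoid} respects the relevant subalgebras, and then completing in norm. First I would verify on the dense algebraic level that $\theta$ maps $C_c(G(X)|_{U_Y}, r^*(A_Y))$ onto $\IC[X,Y]$. Concretely, an element $f \in C_c(G(X), r^*(\ell^\infty(X,\K)))$ supported on some $\overline{\Delta_R}$ gets sent by $\theta$ to the operator with matrix entries built from $f$ along controlled diagonals; restricting the support of $f$ to $G(X)|_{U_Y} = r^{-1}(U_Y)$ and requiring the coefficient function to lie in $r^*(A_Y)$ forces the resulting operator to be supported (as an $X$-by-$X$ matrix) in $\text{Pen}(Y,R') \times \text{Pen}(Y,R')$ for a suitable $R'$ depending on $R$ — because $U_Y \cap X = \bigcup_{S} \text{Pen}(Y,S)$, an entourage of size $R$ meeting $U_Y$ in both coordinates is already confined to a penumbra of $Y$, and the $A_Y$-decay of coefficients is exactly the condition that lets one approximate such operators. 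Conversely, any $T \in \IC[X,Y]$ with support in $\text{Pen}(Y,R) \times \text{Pen}(Y,R)$ pulls back under $\theta^{-1}$ to a compactly supported section over $G(X)|_{U_Y}$ with coefficients in $A_Y$. This establishes the algebraic isomorphism $\theta_Y$.

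Next I would argue that $\theta_Y$ extends to the two stated $C^*$-isomorphisms. For $\Theta_Y$, both sides carry norms inherited as closures inside larger algebras for which we already know $\theta$ extends to an isomorphism $\Theta$: the reduced crossed product $A_Y \rtimes_r G(X)|_{U_Y}$ sits inside $\ell^\infty(X,\K) \rtimes_r G(X)$ as (the closure of) an ideal in the restricted-groupoid picture, and $C^*(X,Y)$ is by definition the closure of $\IC[X,Y]$ inside $C^*(X)$. Since $\Theta$ is an isometric $*$-isomorphism carrying the dense subalgebra $C_c(G(X)|_{U_Y}, r^*(A_Y))$ onto the dense subalgebra $\IC[X,Y]$, it restricts to an isometric isomorphism of their closures, which is exactly $\Theta_Y$. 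The only subtlety here is confirming that the norm on $A_Y \rtimes_r G(X)|_{U_Y}$ coming from the restricted groupoid coincides with the subspace norm from $\ell^\infty(X,\K) \rtimes_r G(X)$; this is standard for étale groupoids — the reduced norm of a crossed product by an open (hence étale) subgroupoid agrees with the restriction of the ambient reduced norm on the corresponding ideal, since regular representations restrict compatibly along the inclusion $r^{-1}(U_Y) \hookrightarrow G(X)$. For $\Theta_{Y,\max}$, I would use the universal property of the maximal norm: any $*$-representation of $\IC[X,Y]$, transported via $\theta_Y^{-1}$, is a $*$-representation of $C_c(G(X)|_{U_Y}, r^*(A_Y))$, and conversely, so the maximal norms match on the nose and the completions are isomorphic. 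Here one does \emph{not} need to worry about whether the maximal norm on the subalgebra equals the restriction of the maximal norm on the big algebra — the statement is only about the intrinsic maximal completion of $\IC[X,Y]$, matching the intrinsic maximal crossed product.

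The main obstacle I anticipate is the bookkeeping in the first step: making precise the correspondence between "support in an entourage meeting $U_Y$ in both coordinates" and "support in a penumbra of $Y$", and verifying that the coefficient condition defining $A_Y$ — decay of $\|\xi(x)\|$ off $\text{Pen}(Y,R)$ — is genuinely the translate of the closure condition defining $C^*(X,Y)$ rather than something slightly weaker or stronger. One has to be careful that the Stone-\v{C}ech closure $\overline{\text{Pen}(Y,R)}$ and the invariant set $U_Y = \bigcup_R \overline{\text{Pen}(Y,R)}$ interact correctly with the entourages $\overline{\Delta_R}$: an element $\gamma \in \overline{\Delta_R}$ with $r(\gamma) \in U_Y$ automatically has $s(\gamma) \in U_Y$ by invariance, but one must check the quantitative version, namely that $\gamma \in \overline{\text{Pen}(Y,S) \times \text{Pen}(Y,S)}$ for $S = S(R)$. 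This is essentially the observation that $\text{Pen}(\text{Pen}(Y,S),R) \subseteq \text{Pen}(Y,S+R)$ together with continuity of the relevant maps to $\beta X$, but it should be spelled out. Once this geometric translation is in hand, everything else follows the template of \cite{CW2004} verbatim, with $\ell^\infty(X,\K)$ in place of $\ell^\infty(X)$ and compact-operator-valued coefficients causing no new difficulty.
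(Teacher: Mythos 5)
The paper does not write out a proof of this proposition; it cites \cite{CW2004} for the analogous uniform Roe algebra statement and appeals to ``a similar argument,'' so your attempt has to stand on its own. Your algebraic and reduced-norm steps are essentially sound, though the mechanism in the algebraic step is slightly misdescribed. A compact $K\subseteq G(X)|_{U_Y}$ has $r(K)$ compact in $U_Y=\bigcup_{S}\overline{\text{Pen}(Y,S)}$, an increasing union of clopen subsets of $\beta X$, so $r(K)\subseteq\overline{\text{Pen}(Y,S)}$ for a single $S$; intersecting with some $\overline{\Delta_R}$ then confines the support, within $X\times X$, to $\text{Pen}(Y,S+R)\times\text{Pen}(Y,S+R)$. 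This, together with the observation that the fibres of $A_Y$ over points of $U_Y$ coincide with those of $\ell^\infty(X,\K)$, is what makes $\theta$ restrict to a bijection onto $\IC[X,Y]$ \emph{exactly}. Your remark that ``the $A_Y$-decay of coefficients is exactly the condition that lets one approximate such operators'' is off-target: at the algebraic level it is the compact-support condition, not coefficient decay, that does the work, and the correspondence is an equality, not an approximation.

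The real gap is in the maximal case. You assert that one need not check whether the intrinsic maximal $C^*$-norm on $\IC[X,Y]$ coincides with the norm it inherits as a subalgebra of $C^*_{\max}(X)$, ``because the statement is only about the intrinsic maximal completion of $\IC[X,Y]$.'' This misreads Definition~\ref{geometric ideal}: $C^*_{\max}(X,Y)$ is defined there as the \emph{closure of $\IC[X,Y]$ inside $C^*_{\max}(X)$}, not its intrinsic maximal completion. So the claim that $\Theta_{Y,\max}$ is an isomorphism is precisely the statement that the canonical $*$-homomorphism $A_Y\rtimes G(X)|_{U_Y}\to\ell^\infty(X,\K)\rtimes G(X)$ is injective, equivalently that the full crossed product norm on $C_c(G(X)|_{U_Y},r^*(A_Y))$ agrees with the subspace norm from $C^*_{\max}(X)$. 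That fact is true --- it is exactly the injectivity at the left end of sequence~\eqref{crossed product sequence}, which the paper attributes to an argument in the style of \cite[Lemma 2.10]{MRW1996} --- but it is a non-trivial property of full crossed products by invariant ideals, and your proposal explicitly declines to address it. Invoke that injectivity and the maximal case closes.
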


For any ideal $I$ of the Roe algebra $C^*(X)$, one can detect the corresponding open invariant subset in the following way. For any $\varepsilon>0$ and $T\in I$, the \emph{$\varepsilon$-support} of $T$ is defined by
$$\supp_\varepsilon(T)=\{(x,y)\in X\times X\mid \|T(x,y)\|\geq \varepsilon\}.$$
It is direct to see that $\supp_{\varepsilon}(T)$ is an entourage. Set
$$U(I)=\bigcup_{T\in I,\varepsilon>0}\overline{r(\supp_{\varepsilon}(T)}).$$
If $I=C^*(X,Y)$, the geometric ideal generated by $Y$, then $U(C^*(X,Y))=U_Y$.

The geometric ideal $C^*(X,Y)$ is not the unique ideal corresponding to $U_Y$. The concept of \emph{ghost operators} was named by G.~Yu, which corresponds to counterexamples to the coarse Baum-Connes conjecture. Recently, the \emph{ghostly ideal} was defined by Q.~Wang and J.~Zhang in \cite{WZ2023} for uniform Roe algebras, which can be understood as a local version of ghost operators:

\begin{Def}
Let $X$ be a metric space with bounded geometry, and $Y\subseteq X$ a subset. We define the \emph{ghostly ideal} of the Roe algebra $C^*(X)$ generated by $Y$ to be
$$I_G(Y)=\left\{T\in C^*(X)\ \big|\ \text{for any }\varepsilon>0,\ r(\supp_\varepsilon(T))\subseteq \text{Pen}(Y,R)\text{ for some }R>0\right\}.$$
\end{Def}

It is straightforward to verify that $I_G(Y)$ forms an ideal in the Roe algebra, satisfying $U(I_G(Y)) = U_Y$. From its definition, one can immediately observe that the ghostly ideal $I_G(U_Y)$ is the \emph{largest} among all ideals of the Roe algebra corresponding to the invariant open set $U_Y$. For our purpose of studying quotients of Roe algebras, we note that quotienting by a larger ideal yields a simpler quotient algebra (particularly in terms of its $K$-theory). This explains our particular interest in this ideal.

\begin{Rem}
For uniform Roe algebras, it has been proved in \cite{WZ2023} that the geometric ideal generated by $Y$ is the minimal ideal whose corresponding invariant open set is $U_Y$. However, this does not hold in general for the case of Roe algebras. In \cite{CW2006}, X.~Chen and Q.~Wang show that an ideal of a Roe algebra is determined by a \emph{rank distribution} function on $X\times X$. The rank distribution not only consists of the information of the corresponding open set, but also the rank behavior for each entry. A geometric ideal does not have any restriction on the rank behavior. Therefore, by imposing additional constraints on the rank distribution, we can identify a smaller ideal within the geometric ideal. For example, the uniform rank algebra $UC^*(X)$ introduced by J.~\v{S}pakula is an ideal whose corresponding invariant set is $\beta X$.
\end{Rem}

Fix the invariant open subset $U_{Y}\subseteq\beta X$ generated by $Y$. Denote by $G(X)_{U^{c}_{Y}}:=G(X)\cap s^{-1}(U^{c}_{Y})$ and $G(X)$ can be devided as follows:
\[G(X)=G(X)_{U_{Y}}\cup G(X)_{U^{c}_{Y}}.\]
We have the following short exact sequence:
$${0} \to {C_{c}(G(X)_{U_{Y}},r^{*}(A_{Y}))} \to {C_{c}(G(X),r^{*}(\ell^{\infty}(X,\K)))} \to {C_{c}(G(X)_{U_{Y}^{c}},r^{*}(\ell^{\infty}(X,\K)/A_{Y}))} \to {0}$$

We may complete the sequence in the following two cases:

\textbf{Case 1.} Based on the results discussed above, we have the following sequence of maximal crossed product algebras:

\begin{equation}\label{crossed product sequence}
{0} \to {A_{Y}\rtimes G(X)_{U_{Y}}} \xrightarrow{i} {\ell^{\infty}(X,\K)\rtimes G(X)} \xrightarrow{j} {(\ell^{\infty}(X,\K)/ A_{Y})\rtimes G(X)_{U^{c}_{Y}}} \to {0},
\end{equation}
where the map $i$ is an inclusion and the map $j$ is a restriction. By a similar argument as in \cite[Lemma 2.10]{MRW1996}, one can show that \eqref{crossed product sequence} is \emph{exact}. 

\textbf{Case 2.} Similarly, we can consider the reduced case and obtain the following sequence:
\begin{equation}\label{the reduced sequence}
{0} \to {A_{Y}\rtimes_r G(X)_{U_{Y}}} \xrightarrow{i_r} {\ell^{\infty}(X,\K)\rtimes_r G(X)} \xrightarrow{j_r} {(\ell^{\infty}(X,\K)/ A_{Y})\rtimes_r G(X)_{U^{c}_{Y}}} \to {0},
\end{equation}
By construction, $i_{r}$ is injective, $j_{r}$ is surjective, and $j_{r}\circ i_{r}=0$. However, this sequence is not exact at the middle term in general. The reason lies in that the kernel of $j_{r}$ is usually not the geometric ideal $C^*(X,Y)$, but the ghostly ideal $I_{G}(Y)$, i.e.,
$$\ker(j_{r})=I_G(Y).$$
Readers can refer to \cite{FSN2014} for the proof when the selected subspace $Y$ is bounded, also see \cite{WZ2023} in the context of the uniform Roe algebra.

The notion of relative Roe algebra is introduced in \cite[Definition 6.1]{EW2025}. We shall introduce a similar version of the relative Roe algebra as follows.

\begin{Def}\label{def: relative Roe algebras}
The \emph{(maximal) relative Roe algebra of $X$ w.r.t. $Y$}, denoted by $C^{*}_{Y,\infty}(X)$ (resp. $C^{*}_{\max,Y,\infty}(X)$), is defined to be the quotient algebra
$$C^{*}_{Y,\infty}(X):= (\ell^{\infty}(X,\K)/ A_{Y})\rtimes_r G(X)_{U^{c}_{Y}}\quad\text{and}\quad C^{*}_{\max,Y,\infty}(X):=(\ell^{\infty}(X,\K)/ A_{Y})\rtimes G(X)_{U^{c}_{Y}}.$$
\end{Def}

Together with Proposition \ref{Roe algebra isomorphic to groupoid} and Proposition \ref{geometric ideal and reduced groupoid algebra}, the sequence (\ref{crossed product sequence}), we have the following exact sequences:
$$0 \to {C^{*}_{\max}(X,Y)} \xrightarrow{i}  {C^{*}_{\max}(X)} \xrightarrow{\pi}  {C^{*}_{\max,Y,\infty}(X)} \to 0$$
and
$$0 \to I_G(Y) \xrightarrow{i}  {C^{*}(X)} \xrightarrow{\pi}  {C^{*}_{Y,\infty}(X)} \to {0}.$$
For any $T\in C^*_{Y,\infty}(X)$, we define the propagation of $T$ by
$$\prop(T)=\inf\{\prop(S)\mid S\in C^*(X)\text{ and }\pi(S)=T\}.$$
The maximal case is defined similarly. Parallelly, we shall write $\IC_{Y,\infty}[X]=\IC[X]/\IC[X,Y]$. It is direct to see that $\IC_{Y,\infty}[X]$ is a dense subalgebra of both $C^{*}_{Y,\infty}(X)$ and $C^{*}_{\max,Y,\infty}(X)$.

\begin{Rem}
We should mention that our relative Roe algebra is different from the version of A.~Engel and C.~Wulff in \cite{EW2025}. In their paper, the relative Roe algebra is defined to be the quotient of the Roe algebra $C^*(X)$ by the geometric ideal $C^*(X, Y)$. In this paper, we shall consider instead the quotient by the ghostly ideal rather than the geometric ideal.
\end{Rem}

\subsection{Relative higher index and positive scalar curvature obstruction}

The notion of relative index was first proposed in \cite{GroLaw1983} by Gromov and Lawson. It was originally about two manifolds that are isometric outside a compact set. Then, a gluing technique can be used to define a relative index of the two Dirac operators on a glued manifold as an integer number. If these two Dirac operators are Fredholm, then the relative index of these two manifolds equals the difference between their indices, also see \cite{Roe1991}.

One of the most common ways to guarantee the Fredholmness of the Dirac operator on a non-compact manifold is to assume the manifold has uniform positive scalar curvature outside a compact subset. In \cite{GroLaw1983}, M.~Gromov and H.~Lawson prove that if $M$ is spin and has positive scalar curvature outside a compact subset, then the Dirac operator on $M$ is Fredholm even if $M$ is not compact. This result is generalized by Z.~Xie and G.~Yu in \cite{XY2014} to the equivariant case. If $M$ with a proper $\Ga$-action has positive scalar curvature outside a cocompact set, then the higher index of the Dirac operator can be taken as an element in $K_*(C^*_r\Ga)$. For the coarse geometry case, if we replace the compact set in the theorem of Gromov-Lawson by a unbounded subset $Y$, then the index of the Dirac operator is in the $K$-theory of the geometric ideal $K_*(C^*(X, Y))$, the reader is referred to \cite{EW2025} for more discussions on the coarse case.

Actually, we can generalize the above results to the following theorem:

%From the above examples, we can see that if a manifold has regions with positive scalar curvature, the Dirac operator does not provide index information in those regions. More precisely, if $D$ is the Dirac operator on a spin manifold $M$ and the manifold has uniformly positive scalar curvature on $Y^c$, then the index of $D$ lies in $K_*(C^*(X,Y))$. The relative higher index map we defined in the previous section is completely different from the traditional relative index map. Our definition is actually the complement of the traditional one. This can be seen from the following proposition:

\begin{Thm}\label{pro: vanishing of the boundary index}
Let $M$ be a spin, Riemannian manifold with bounded geometry and $Y\subseteq M$ a subspace of $M$. If there exists $\varepsilon_0>0$ such that
$$\liminf_{R\to\infty, x\in \text{Pen}(Y,R)^c}\kappa(x)\geq\varepsilon_0,$$
Then for any ideal $I$ containing $C^*(M, Y)$, the relative higher index of $D$ with respect to $I$, determined by the composition of the following maps:
$$\text{Ind}_{I}:K_*(M)\xrightarrow{\text{Ind}} K_*(C^*(M))\xrightarrow{\pi_*}K_*(C^*(M)/I),$$
must be $0$, i.e., $\textup{Ind}_{I}([D])=0$.
\end{Thm}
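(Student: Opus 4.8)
The plan is to show that the higher index $\mathrm{Ind}(D) \in K_*(C^*(M))$ already lifts to $K_*(C^*(M,Y))$, so that its image under $\pi_*: K_*(C^*(M)) \to K_*(C^*(M)/I)$ vanishes for \emph{any} ideal $I$ containing the geometric ideal $C^*(M,Y)$. Indeed, once we know that $\mathrm{Ind}(D) = i_*(\alpha)$ for some $\alpha \in K_*(C^*(M,Y))$, where $i: C^*(M,Y) \hookrightarrow C^*(M)$, then factoring the inclusion $C^*(M,Y) \hookrightarrow I \hookrightarrow C^*(M)$ and using that the composition $I \hookrightarrow C^*(M) \xrightarrow{\pi} C^*(M)/I$ is zero on $K$-theory, we get $\mathrm{Ind}_I([D]) = \pi_* \circ i_*(\alpha) = 0$. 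So the whole theorem reduces to the ``concentration'' statement: positivity of $\kappa$ outside every penumbra $\mathrm{Pen}(Y,R)$ forces the coarse index of $D$ into the geometric ideal.

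The main step, then, is to prove this concentration. Here I would follow the now-standard Gromov--Lawson--Roe localization argument adapted to the coarse setting, essentially as in \cite{EW2025}. First, fix a $\delta$-net $X \subseteq M$ and work with $C^*(M) \cong C^*(X)$. The hypothesis $\liminf_{R\to\infty,\ x\in\mathrm{Pen}(Y,R)^c}\kappa(x) \geq \varepsilon_0$ means: for every $\varepsilon < \varepsilon_0$ there is $R_\varepsilon > 0$ with $\kappa|_{\mathrm{Pen}(Y,R_\varepsilon)^c} \geq \varepsilon$. Using the Lichnerowicz formula $D^2 = \nabla^*\nabla + \tfrac{\kappa}{4}$, on the region $\mathrm{Pen}(Y,R_\varepsilon)^c$ the operator $D$ is bounded below, so functional calculus applied to a normalizing function $\chi$ (odd, $\chi(\pm\infty) = \pm 1$) produces the index class in the form $\mathrm{Ind}(D) = [\,p_\chi\,] - [\,e\,]$ where $p_\chi$ is built from $\chi(D)$ and $e$ is a trivial idempotent. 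The key analytic fact is that because the spectral gap of $D$ opens up outside $\mathrm{Pen}(Y,R_\varepsilon)$, one can choose $\chi$ (with Fourier transform of small support, so $\chi(D)$ has small propagation by finite-propagation-speed for the wave operator) so that $\chi(D)^2 - 1$ — equivalently the relevant idempotent minus the trivial one — is supported, up to arbitrarily small norm error, in $\mathrm{Pen}(Y, R_\varepsilon + \rho) \times \mathrm{Pen}(Y, R_\varepsilon + \rho)$ for a controlled $\rho$ depending on the propagation. Letting $\varepsilon \uparrow \varepsilon_0$ and taking the corresponding limit/approximation, the idempotent difference representing $\mathrm{Ind}(D)$ can be approximated in norm by elements of $\bigcup_R \mathbb{C}[\mathrm{Pen}(Y,R)] = \mathbb{C}[M,Y]$, hence lies in $C^*(M,Y)$, giving the desired lift $\alpha \in K_*(C^*(M,Y))$ with $i_*(\alpha) = \mathrm{Ind}(D)$.

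I would organize the write-up as: (i) recall the construction of $\mathrm{Ind}(D)$ via the localization algebra / the Yu-style picture, or directly via $\chi(D)$; (ii) state the finite propagation speed estimate for $\cos(tD)$ and deduce propagation control for $\chi(D)$ when $\widehat{\chi}$ is compactly supported; (iii) use the uniform lower bound $\kappa \geq \varepsilon$ on $\mathrm{Pen}(Y,R_\varepsilon)^c$ together with Lichnerowicz to show $\psi(D)$ is supported (mod small norm) near $\mathrm{Pen}(Y,R_\varepsilon)$ for suitable $\psi$ vanishing near the spectral gap; (iv) assemble these into the statement that $\mathrm{Ind}(D) \in \mathrm{image}(i_*: K_*(C^*(M,Y)) \to K_*(C^*(M)))$; (v) conclude via the trivial diagram chase above that $\mathrm{Ind}_I([D]) = 0$ for every ideal $I \supseteq C^*(M,Y)$.

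The main obstacle I anticipate is purely bookkeeping rather than conceptual: making the ``supported near $\mathrm{Pen}(Y,R_\varepsilon)$ up to small norm'' statement precise and uniform. Since $Y$ is unbounded the spectral gap $\varepsilon_\varepsilon$ depends on how far out one goes, and $\mathrm{Pen}(Y,R_\varepsilon)$ grows as $\varepsilon \uparrow \varepsilon_0$; one must check that the propagation $\rho$ needed stays bounded (it does, since $\chi$ can be fixed once $\varepsilon$ is fixed, and letting $\varepsilon \uparrow \varepsilon_0$ only enlarges the penumbra) and that the norm errors can be absorbed into the $C^*$-closure. Bounded geometry of $M$ is what keeps all the operator-norm estimates uniform in the net points. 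Once these uniformity points are handled — and they are handled essentially verbatim as in the cocompact case of \cite{XY2014} and the coarse case of \cite{EW2025} — the argument goes through. In fact, since $C^*(M,Y) \subseteq I$, one does not even need the sharper ghostly-ideal refinement here; the geometric ideal suffices, which is why the statement holds for \emph{every} such $I$.
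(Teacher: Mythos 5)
Your proposal is correct but takes a genuinely different route. You reduce the theorem to the ``concentration'' statement $\mathrm{Ind}(D)\in\mathrm{im}\bigl(K_*(C^*(M,Y))\to K_*(C^*(M))\bigr)$, which you establish via Gromov--Lawson--Roe propagation control together with the Lichnerowicz formula (as in Engel--Wulff), and then you close with the trivial observation that $\pi_*\circ\iota_*=0$ once $C^*(M,Y)\subseteq I$; this is precisely the argument the paper's introduction sketches informally. The paper's actual proof of this theorem, however, does not re-establish or cite the concentration result. It first proves a quotient isomorphism: for any open $N\subseteq M$ with $N\cup Y$ coarsely equivalent to $M$, one shows $C^*(N)+I=C^*(M)$ (finite-propagation approximants near $Y$ land in $C^*(M,Y)\subseteq I$, then take closures), whence $C^*(M)/I\cong C^*(N)/(C^*(N)\cap I)$ by the second isomorphism theorem. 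Taking $N=\overline{\text{Pen}(Y,R)}^c$ with $R$ large enough that $\kappa\geq\varepsilon_0/2$ on $N$, the restricted operator $D_N$ satisfies $\mathrm{Ind}([D_N])=0$ in $K_*(C^*(N))$ by Lichnerowicz since PSC holds globally on $N$, and a short diagram chase through the quotient isomorphism gives $\mathrm{Ind}_I([D])=0$. Your route proves the stronger lifting statement (of independent interest) and stays on the complete manifold $M$ throughout, so no completeness issues arise for the Dirac operator, but you pay in the propagation-control bookkeeping you yourself flag; the paper's route trades all propagation estimates for a one-paragraph $C^*$-algebraic identity plus the global PSC vanishing on $N$, at the cost of restricting the Dirac operator to the possibly non-complete open submanifold $N$, a point your approach sidesteps.
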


\begin{proof}
The proof is split into the following two steps.

\noindent\emph{Step 1.} Let $N\subseteq M$ be an open subset of $M$ such that $N\cup Y$ is coarsely equivalent to $M$. Then $C^*(M)/I\cong C^*(N)/C^*(N)\cap I$.

Set $Z$ to be a net of $M$ which has bounded geometry such that $Z_N\cup Z_Y=Z$, where $Z_N=Z\cap N$ and $Z_Y=Z\cap Y$. Such a net exists since $M$ is coarsely equivalent to $N\cup Y$, and one can always find such a net in $N\cup Y$ by bounded geometry. Since Roe algebras are coarsely invariant, we shall identify $C^*(M)$ (resp., $(C^*(M,N)), C^*(M,Y)$) with $C^*(Z)$ (resp., $(C^*(Z,Z_N)), C^*(Z,Z_Y)$).

By the isomorphism theorem, we have that
$$\frac{C^*(N)+I}{I}\cong\frac{C^*(N)}{C^*(N)\cap I}$$
Thus, it suffices to show that $C^*(N)+I=C^*(M)$. For any $T\in C^*(M)$, we choose a sequence $\{S_n\}_{n\in\IN}\subseteq\IC[M]$ such that $\|T-S_n\|\to 0$. Define $S_{N,n}=\chi_{Z_N}S_n\chi_{Z_N}\in\IC[N]$. Since $Z_N\cup Z_Y=Z$, we conclude that $\supp(S-S_{N,n})\subseteq B(Z_Y,\prop(S_n))$. Thus $S_n-S_{N,n}\in C^*(M,Y)$. To sum up, we have that $C^*(N)+C^*(M,Y)=C^*(M)$. Let $n$ tend to $\infty$. We conclude that
$$\chi_{Z_N}T\chi_{Z_N}=\lim_{n\to\infty}\chi_{Z_N}S_n\chi_{Z_N}\in C^*(N)\quad\text{and}\quad T-\chi_{Z_N}T\chi_{Z_N}\in C^*(M,Y),$$
i.e., $C^*(N)+C^*(M,Y)=C^*(M)$. Since $I$ contains $C^*(M,Y)$, we have that $C^*(N)+I=C^*(M)$.

\noindent{\emph{Step 2.}}
Since $$\liminf\limits_{R\to\infty, x\in \text{Pen}(Y,R)^c}\kappa(x)\geq\varepsilon_0,$$ there exists $R>0$ such that $\kappa_g(x)\geq\frac{\varepsilon_0}2>0$ for all $x\in \text{Pen}(Y,R)^c$. Set $N=\overline{\text{Pen}(Y,R)}^c$. It is clear to see that $N$ is an open subset satisfying that $N\cup Y$ is coarsely equivalent to $M$. Denote by $[D]$ the $K$-homology class determined by $D$ in $K_*(M)$. Then one can restrict $D$ on $N$ and we denote by $[D_N]$ the corresponding $K$-homology class in $K_*(N)$. With Step 1, we then have the following commuting diagram:
$$\begin{tikzcd}
K_*(N) \arrow[r, "\textup{Ind}"]                  & K_*(C^*(N)) \arrow[d, "ad_{V_i}*"] \arrow[rd] \\
K_*(M) \arrow[r, "\textup{Ind}"] \arrow[u, "i^*"] & K_*(C^*(M)) \arrow[r]                         & {K_*(C^*(M)/I),}
\end{tikzcd}$$
where $i: N\to M$ is the canonical inclusion, which is both smooth and coarse, and $i^*$ is the induced $K$-homology map, and $V_i$ is the covering isometry for $i$. Since the metric on $N$ has strictly positive scalar curvature, $\textup{Ind}([D_N])=0$ by the Lichnerowicz formula. By definition, we have that $i^*([D])=[D_N]$. Then the proposition followes by a diagram chasing argument.
\end{proof}

%Under our definition, \textbf{if the scalar curvature is uniformly positive outside $Y$, then our relative index with respect to $Y$ must be 0}. Our index essentially ignores the $Y$ part and focuses on the region outside $Y$, so the uniformly positive scalar curvature outside $Y$ indicates that the Dirac operator will not provide index information there, leading to the above proposition. In fact, we will prove later in this paper that the converse of this proposition also holds in certain cases.

\begin{Rem}
Note that for any ideal $I$ of the Roe algebra, we can always push forward the index of the Dirac operator to the quotient algebra via the composition of maps
$$\text{Ind}_{I}: K_*(M)\xrightarrow{\text{Ind}} K_*(C^*(M))\xrightarrow{\pi_*}K_*(C^*(M)/I).$$
As seen from the above proposition, if the ideal contains a geometric ideal $C^*(M,Y)$, then the non-vanishing of the quotient index corresponds to an obstruction to the existence of uniformly positive scalar curvature in $Y^c$ at infinity.
\end{Rem}

In the previous subsection, we have defined the relative Roe algebra as the quotient of the Roe algebra by the ghostly ideal. For a subset $Y\subseteq M$, the ghostly ideal $I_G(Y)$ is maximal and contains the corresponding geometric ideal $C^*(X, Y)$. According to the above proposition, the non-vanishing of the index in the relative Roe algebra can reflect obstructions to positive scalar curvature on $Y^c$, which is also an obstruction to positive scalar curvature on the whole $M$. This index is called the \emph{relative index associated with $Y$}
$$\text{Ind}_{Y,\infty}: K_*(M)\xrightarrow{\text{Ind}} K_*(C^*(M))\xrightarrow{\pi_*}K_*(C^*_{Y,\infty}(M)).$$
On the other hand, quotienting by the ghostly ideal is a more practical choice for index computations. This is because the calculation of $K$-theory for ghostly ideals is typically much more challenging than for geometric ideals. For instance, when $Y$ is bounded, the geometric ideal reduces to the compact operator algebra whose $K$-theory is well-known, while the $K$-theory of the ghostly ideal in this case is considerably more complicated if $M$ does not have Property A (see \cite{WZ2023}). Therefore, by quotienting out the more complex object, we obtain a quotient algebra whose $K$-theory is relatively easier to compute.

\section{Relative higher index theory}

In this section, we present a method for computing the $K$-groups of relative Roe algebras through the \emph{assembly maps}. The underlying idea is rather straightforward. Indeed, in the previous section we have already realized the relative Roe algebra as a certain groupoid crossed product algebra. According to \cite{Tu2000}, we can always define a groupoid Baum-Connes assembly map for boundary groupoids, thereby transforming the computation of the $K$-theory of Roe algebras into computations of certain algebraic-topological invariants, namely the $K$-homology of classifying spaces.
From our perspective, while the groupoid language is powerful, we prefer to understand Roe algebras through the more concrete metric space viewpoint rather than the abstract groupoid $KK$-theory framework. Therefore, we introduce a notion of \emph{relative $K$-homology} by using localization algebras, which then leads to the construction of a \emph{relative coarse assembly map}.

\subsection{Relative coarse assembly maps}

We start with some discussion on the localization algebras.

\begin{Def}
Let $M$ be a proper metric space, $Y\subseteq M$ a subset. \emph{The localization algebra of $M$ localized near $Y$}, denoted by $C^{*}_{L, Y}(M)$, is defined to be the subalgebra of $C^*_L(M)$ of all bounded and uniformly norm-continuous functions $g:\IR_+\to C^*(M,Y)$.

The \emph{$K$-homology of $M$ localized near $Y$} is defined to be
$$R_YK_*(M)=K_*(C^*_{L,Y}(M)).$$
\end{Def}

Since the maximal and reduced localization algebras are equivalent at the $K$-theory level, one can also define the localization algebra localized near $Y$ in the maximal norm, denoted by $C^*_{L,\max,Y}(M)$, which will give the same $K$-homology. Moreover, one can also replace the geometric ideal by the ghostly ideal, denoted by $C^*_{L,Y}(M,I_G(Y))\subseteq C_{ub}(\IR_+,I_G(Y))$. As we need the propagation tend to $0$ in localization algebras, the asymptotic behavior of $C^*_{L,Y}(M)$ and $C^*_{L,Y}(M,I_G(Y))$ would be the same since the finite propagation part of the geometric ideal coincides with that of the ghostly ideal. After taking $K$-theory, they are the same. From the definition, it is easy to see that $R_YK_*(M)$ is local in $M$ and coarse in $Y$, More precisely, if there is a pair $(M',Y')$ and a Lipschitz (more generally, continuous coarse) map $f: M'\to M$ such that $f(Y')\subseteq Y$, then it induces a group homomorphism
$$f_*: R_{Y'}K_*(M')\to R_YK_*(M).$$
If $f$ is a strong Lipschitz homotopy equivalence and $f(Y')$ is coarsely equivalent to $Y$, then $f_*$ is an isomorphism.

\begin{Lem}\label{lem: representable K-homology}
Let $X$ be a metric space with bounded geometry, and $Y$ a subspace of $X$. Then for any $d\geq 0$, we have that $R_YK_*(P_d(X))$ is isomorphic to $\lim\limits_{R\to\infty}K_*(P_d(\text{Pen}(Y,R)))$.
\end{Lem}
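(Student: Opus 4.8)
The plan is to exhibit an explicit ``expanding'' direct system of localization algebras whose direct limit is $C^*_{L,Y}(P_d(X))$ and whose $K$-theory computes $\lim_R K_*(P_d(\mathrm{Pen}(Y,R)))$. First I would fix a $\delta$-net $Z$ of $P_d(X)$ adapted to $Y$ (so that $Z_Y=Z\cap Y$ is a net of $Y$), and for each $R\geq 0$ consider the subspace $P_d(\mathrm{Pen}(Y,R))\subseteq P_d(X)$, equipped with the net $Z\cap\mathrm{Pen}(Y,R)$. The geometric ideal $C^*(P_d(X),Y)$ is, by Definition \ref{geometric ideal}, the closure of $\bigcup_R C^*(\mathrm{Pen}(Y,R))$ inside $C^*(P_d(X))$; I would first verify that this union is directed (since $\mathrm{Pen}(Y,R)\subseteq\mathrm{Pen}(Y,R')$ for $R\leq R'$ gives an inclusion of Roe algebras after composing with the covering isometry for the inclusion map, and these inclusions are compatible up to the ambiguity of covering isometries, which does not affect $K$-theory) so that $C^*(P_d(X),Y)=\varinjlim_R C^*(P_d(\mathrm{Pen}(Y,R)))$ as a $C^*$-algebra.

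Next I would promote this to the localization level. Since an element of $C^*_{L,Y}(P_d(X))$ is a bounded uniformly continuous path $g:\mathbb{R}_+\to C^*(P_d(X),Y)$, and such a path has $\prop(g(t))\to 0$, for any $\varepsilon>0$ the whole path eventually lies in a single $C^*(\mathrm{Pen}(Y,R))$ up to $\varepsilon$ — but this $R$ need not be uniform in $t$. The correct statement is that $C^*_{L,Y}(P_d(X))$ is the closure of $\bigcup_R C^*_{L}(P_d(\mathrm{Pen}(Y,R)))$, i.e.\ $C^*_{L,Y}(P_d(X))=\varinjlim_R C^*_L(P_d(\mathrm{Pen}(Y,R)))$, because a path landing in the ideal $\varinjlim_R C^*(\mathrm{Pen}(Y,R))$ can be approximated in sup-norm by a path landing in a fixed $C^*(\mathrm{Pen}(Y,R))$ (cut down $g(t)$ by $\chi_{Z\cap \mathrm{Pen}(Y,R)}$ on both sides; the error is supported near $Z\setminus\mathrm{Pen}(Y,R)$ and is controlled uniformly in $t$ once $\prop(g(t))$ is uniformly small, which it is since $g$ is uniformly continuous and $\prop(g(t))\to 0$). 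I would spell out this cutting-down estimate carefully — it is the technical heart. Then, using continuity of $K$-theory under direct limits (and the fact that $K$-theory does not see the covering-isometry ambiguity in the connecting maps),
$$K_*(C^*_{L,Y}(P_d(X)))\cong\varinjlim_R K_*(C^*_L(P_d(\mathrm{Pen}(Y,R))))=\varinjlim_R K_*(P_d(\mathrm{Pen}(Y,R))).$$

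The main obstacle I anticipate is the bookkeeping around the connecting maps of the direct system: the inclusions $\mathrm{Pen}(Y,R)\hookrightarrow\mathrm{Pen}(Y,R')$ induce maps of Roe algebras only after a choice of covering isometry, so the system is a priori a system in the $K$-theory category rather than an honest inductive system of $C^*$-algebras on the nose. I would handle this the standard way — either by noting that all covering isometries for a fixed map induce the same map on $K$-theory (an Eilenberg-swindle / homotopy argument, as in \cite{WY2020}), so the $K$-theory system is well-defined and the limit is unambiguous; or by choosing the nets and covering isometries compatibly from the start so that the inclusions are literally inclusions of subalgebras. The second route is cleaner here because $\mathrm{Pen}(Y,R)$ sits inside $P_d(X)$ and one can use the ambient Hilbert module $\mathcal H_{P_d(X)}$ restricted to $\ell^2(Z\cap\mathrm{Pen}(Y,R))$, making every connecting map an honest corner inclusion; with this setup the argument reduces to continuity of $K_*$ under increasing unions of $C^*$-subalgebras, which is routine. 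A secondary minor point to check is that the maximal/reduced and geometric/ghostly variants all give the same answer, but this is already granted by the remarks preceding the lemma (the finite-propagation parts coincide and propagations tend to $0$).
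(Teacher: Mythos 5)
There is a genuine gap in the proposed proof, and it sits exactly where you flag ``the technical heart.'' The claimed algebra identity
$$C^*_{L,Y}(P_d(X))=\overline{\bigcup_R C^*_L(P_d(\text{Pen}(Y,R)))}$$
is false, and the cutting-down estimate you want does not hold. Small propagation controls the \emph{support} of $g(t)$ relative to the cut-off, not the \emph{norm} of the cut-off error. An element $g(t)\in C^*(P_d(X),Y)$ is merely a norm limit of operators supported in $\text{Pen}(Y,R)\times\text{Pen}(Y,R)$; the element itself can have matrix entries of size comparable to $\|g(t)\|$ arbitrarily far from $Y$. Concretely, take $X=\mathbb N$ (with bounded geometry), $Y=\{0\}$, $q$ a rank-one projection on $\H$, and let
$$g(t)=\sum_{n\in\mathbb N}e^{-(t-n)^2}\,e_{nn}\otimes q.$$
This is diagonal (propagation $0$), bounded, uniformly norm-continuous, and $g(t)\in\K=C^*(X,Y)$ for every $t$, so $g\in C^*_{L,Y}(X)$. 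But for any finite $N$ and any path $h$ with values supported in $\{0,\dots,N\}$ one has $\|g(N+1)-h(N+1)\|\geq 1$, so $g$ cannot be approximated in sup-norm by elements of $C^*_L(\text{Pen}(Y,R))$ for any fixed $R$. Since the algebra-level direct limit is not $C^*_{L,Y}(P_d(X))$, ``continuity of $K_*$ under increasing unions'' does not give the conclusion.

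The paper takes a genuinely different route precisely to avoid this. It sets up the canonical map $i_*:\lim_R K_*(C^*_L(P_d(\text{Pen}(Y,R))))\to K_*(C^*_{L,Y}(P_d(X)))$, then observes both sides admit Mayer--Vietoris sequences (citing Yu's cutting-and-pasting for localization algebras), which reduces the isomorphism to the $0$-skeleton. On the $0$-skeleton both sides are computed explicitly: the left becomes $\lim_R\prod_{x\in\text{Pen}(Y,R)}K_*(C_{ub}(\mathbb R_+,\K(\H_x)))$, the right becomes $K_*\bigl(C_{ub}(\mathbb R_+,\lim_R\prod_{x\in\text{Pen}(Y,R)}\K(\H_x))\bigr)$, and quasi-stability (via \cite[Lemma 12.4.3]{WY2020}) lets one pull the $\lim_R\prod_x$ through $K_*$ on the right, matching the two. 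The $K$-theory identification thus holds even though the underlying algebras do not coincide; your approach could be salvaged by replacing the false algebra identity with this Mayer--Vietoris reduction. (Your remarks about covering isometries, directedness, and the geometric/ghostly variants are fine and not the issue.)
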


\begin{proof}
We shall identify $C^*(P_d(X),Y)$ with $\lim_{R\to\infty}C^*(P_d(\text{Pen}(Y,R)))$ as in \cite{HRY1993}. Thus, there is a canonical inclusion $C^*_L(P_d(\text{Pen}(Y,R)))\to C^*_{L,Y}(P_d(X))$ for any $R\geq 0$, which induces an inclusion
\begin{equation}\label{eq: localization algebras}i_*: \lim_{R\to\infty}K_*(C^*_L(P_d(\text{Pen}(Y,R))))\to K_*(C^*_{L,Y}(P_d(X))).\end{equation}
We should mention that both sides admit the Mayer-Vietoris sequence. For the left side, the proof is similar to \cite[Proposition 9.4.13]{WY2020}. For the right side, as we discussed above, $R_YK_*(M)$ is local in $M$. The detailed proof is similar to \cite[Proposition 3.11]{Yu1997}. Thus, using a cutting and pasting argument as in \cite[Theorem 3.2]{Yu1997}, it suffices to prove it for the $0$-skeleton. In this case, the left side becomes
$$K_*\left(\lim_{R\to\infty}\prod_{x\in \text{Pen}(Y,R)}C^*_{ub}(\IR_+,\K(\H_x))\right)=\lim_{R\to\infty}\prod_{x\in \text{Pen}(Y,R)}K_*\left(C^*_{ub}(\IR_+,\K(\H_x))\right),$$
where $\H_x=\delta_x\ox \H\subseteq\ell^2(X)\ox \H$. For the right-hand side, it becomes
$$K_*\left(C^*_{ub}\left(\IR_+,\lim_{R\to\infty}\prod_{x\in \text{Pen}(Y,R)}\K(\H_x)\right)\right)=\lim_{R\to\infty}\prod_{x\in \text{Pen}(Y,R)}K_*\left(\K(\H_x))\right).$$
It is direct to see that $\lim\limits_{R\to\infty}\prod\limits_{x\in \text{Pen}(Y,R)}\K(\H_x)$ is quasi-stable and the identity above follows from \cite[Lemma 12.4.3]{WY2020}. As a result, the map \eqref{eq: localization algebras} is an isomorphism restricting to $0$-skeleton. This finishes the proof.
\end{proof}

As a direct corollary, if $Y$ is bounded, then $R_YK_*(P_d(X))$ is isomorphic to the representable $K$-homology $RK_*(P_d(X))$.

\begin{Def}\label{def: localization algebra at infinity}
\emph{The localization algebra at infinity of $M$ relative to $Y$}, denoted by $C^*_{L,Y,\infty}(M)$, is defined to be the quotient algebra $C^*_L(M)/C^*_{L,Y}(M,I_G(Y))$.

The maximal version is defined to be $C^*_{L,\max,Y,\infty}(M)=C^*_{L,\max}(M)/C^*_{L,\max,Y}(M)$.
\end{Def}

As we discussed above, $C^*_{L,Y,\infty}(M)$ and $C^*_{L,\max,Y,\infty}(M)$ has the same $K$-theory. By definition, we also have the following six-term exact sequence
$$\begin{tikzcd}
R_YK_0(M) \arrow[r] & K_0(M) \arrow[r] & K_0(C^*_{L,Y,\infty}(M)) \arrow[d] \\
K_1(C^*_{L,Y,\infty}(M)) \arrow[u] & K_1(M) \arrow[l] & R_YK_1(M) \arrow[l]
\end{tikzcd}$$

An element in $C^*_{L,Y,\infty}(M)$ can be seen as a bounded uniformly continuous function $f:\IR_+\to C^*_{Y,\infty}(M)$. Thus, there exists a canonical evaluation map
$$ev:C^*_{L,Y,\infty}(M)\to C^*_{Y,\infty}(M),\quad\text{defined by } f\mapsto f(0)$$
which induces a relative index map
$$\text{Ind}_{Y,\infty}: K_*(C^*_{L,Y,\infty}(M))\to K_*(C^*_{Y,\infty}(M)).$$
Now, we can formulate the relative version of the coarse Baum-Connes conjecture. Let $X$ be a metric space with bounded geometry, $Y\subseteq X$ a subspace. Then for any $d\geq 0$, we can define the relative index map
$$\text{Ind}_{Y,\infty}: K_*(C^*_{L,Y,\infty}(P_d(X)))\to K_*(C^*_{Y,\infty}(X)).$$
The relative coarse assembly map is defined by pushing $d$ to infinity:
$$\mu_{Y,\infty}: \lim_{d\to\infty}K_*(C^*_{L,Y,\infty}(P_d(X)))\to K_*(C^*_{Y,\infty}(X)).$$

\begin{CBC/N}
Let $X$ be a metric space with bounded geometry, $Y$ a subspace of $X$. \begin{itemize}
\item The \textbf{relative coarse Baum-Connes conjecture} for $(X,Y)$: the relative coarse assembly map $\mu_{Y,\infty}$ is an isomorphism;
\item The \textbf{relative coarse Novikov conjecture} for $(X,Y)$: the relative coarse assembly map $\mu_{Y,\infty}$ is injective.
\end{itemize}\end{CBC/N}

One can similarly define the maximal relative coarse assembly map
$$\mu_{\max, Y,\infty}: \lim_{d\to\infty}K_*(C^*_{L,\max,Y,\infty}(P_d(X)))\to K_*(C^*_{\max,Y,\infty}(X)).$$
Thus, we have the following maximal analogue of the relative coarse Baum-Connes conjecture.

\begin{MCBC/N}
Let $X$ be a metric space with bounded geometry, $Y$ a subspace of $X$. \begin{itemize}
\item The \textbf{maximal relative coarse Baum-Connes conjecture} for $(X,Y)$: the maximal relative coarse assembly map $\mu_{\max, Y,\infty}$ is an isomorphism;
\item The \textbf{maximal relative coarse Novikov conjecture} for $(X,Y)$: the maximal relative coarse assembly map $\mu_{\max, Y,\infty}$ is an injection.
\end{itemize}\end{MCBC/N}

The relative coarse Baum-Connes conjecture has two special cases. When the subset $Y = \emptyset$, the relative coarse Baum-Connes conjecture reduces to the global coarse Baum-Connes conjecture. When $Y$ is coarsely equivalent to $X$, the relative Roe algebra becomes trivial, making the relative coarse Baum-Connes conjecture vacuously true. When $Y$ is bounded, the relative coarse Baum-Connes conjecture coincides with the boundary coarse Baum-Connes conjecture introduced by M.~Finn-Sell and N.~Wright \cite{FSN2014}.

\subsection{Existence of positive scalar curvature at infinity}

The non-existence of positive scalar curvature (abbreviated as the PSC problem) is a classic problem in differential geometry. This problem first originated with the torus $\mathbb{T}^n$ - whether there exists a Riemannian metric $g$ on $\mathbb{T}^n$ with uniformly positive scalar curvature bounded below by some $\varepsilon_0 > 0$. In \cite{SY1979}, for dimensions $n \leq 7$, Schoen and Yau used the minimal surface method to prove that no such metrics exist on $\mathbb{T}^n$. Later in \cite{GroLaw1983}, Gromov and Lawson extended this to all dimensions using higher index theory. Recently, Schoen and Yau removed the dimensional restrictions using minimal surface methods in \cite{SY2022}. These results led to the Gromov-Lawson conjecture \cite{Rosenberg1983}, which states that no positive scalar curvature metrics exist on aspherical manifolds. Notably, the coarse Baum-Connes conjecture implies this conjecture. The reader is referred to \cite{Rosenberg2007} for a survey on the development of the PSC problem.

\begin{Thm}\label{thm: FCE to PSC infty}
Let $M$ be a uniformly contractible Riemannian manifold with bounded geometry and $Y\subseteq M$ a subspace of $M$. If the (maximal) relative coarse Novikov conjecture holds for $(M,Y)$, then
$$\liminf_{R\to\infty, x\in \text{Pen}(Y,R)^c}\kappa(x)$$
can never be strictly positive. As a result, $M$ can never admit a uniformly positive scalar curvature metric.
\end{Thm}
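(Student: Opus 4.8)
The plan is to argue by contradiction, exactly parallel to the classical Gromov--Lawson--Rosenberg scheme, but carried out on the relative level. Suppose $\liminf_{R\to\infty,\,x\in\text{Pen}(Y,R)^c}\kappa(x)=\varepsilon_0>0$. Then, as in Step~2 of the proof of Theorem~\ref{pro: vanishing of the boundary index}, there is an $R>0$ with $\kappa(x)\geq\varepsilon_0/2$ on $\text{Pen}(Y,R)^c$, so setting $N=\overline{\text{Pen}(Y,R)}^c$ we have a complete metric on $N$ with uniformly positive scalar curvature, and $N\cup Y$ coarsely equivalent to $M$. The Dirac operator $D$ on $M$ restricts to a Dirac operator $D_N$ on $N$, and by the Lichnerowicz formula together with the finite-propagation functional calculus, the \emph{localized} index class $[D_N]\in K_*(N)$ maps to $0$ in $K_*(C^*_{L}(N))$ — more precisely, the local index of $D_N$ vanishes in every degree because $D_N$ is uniformly invertible. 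I would then show that the composition $K_*(N)\to R_YK_*(M)\to K_*(M)\to K_*(C^*_{L,Y,\infty}(P_d(M)))$ factors through this vanishing local index, hence the image of $[D_N]$ in $\lim_{d\to\infty}K_*(C^*_{L,Y,\infty}(P_d(M)))$ is zero, while simultaneously the relative coarse assembly map sends that image to $\text{Ind}_{Y,\infty}([D])\in K_*(C^*_{Y,\infty}(M))$.

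The second half of the argument produces the contradiction by showing that $\text{Ind}_{Y,\infty}([D])\neq 0$ when $M$ is uniformly contractible. Here I would invoke uniform contractibility precisely as in the standard proof that the coarse Baum--Connes conjecture implies the Gromov--Lawson conjecture: for a uniformly contractible manifold of bounded geometry, the assembly-map source $\lim_{d\to\infty}K_*(P_d(M))$ is canonically identified with $K_*^{lf}(M)$, and the $K$-homology fundamental class $[D]$ is a generator detected by the higher index; more to the point, the \emph{relative} index $\text{Ind}_{Y,\infty}([D])$ is the image of $\text{Ind}([D])\in K_*(C^*(M))$ under $\pi_*$, and one checks using uniform contractibility that the fundamental class does not lie in the image of $R_YK_*(M)\to K_*(M)$ (equivalently, its image in $K_*(C^*_{L,Y,\infty}(P_d(M)))$ is nonzero for $d$ large) — because $Y^c$ at infinity carries genuine $K$-homological content that cannot be absorbed into a neighborhood of $Y$. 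Combined with the injectivity of $\mu_{Y,\infty}$ (or $\mu_{\max,Y,\infty}$) furnished by the hypothesis, this forces $\text{Ind}_{Y,\infty}([D])\neq 0$, contradicting the vanishing established in the first paragraph. The final sentence of the theorem then follows: if $M$ admitted a uniformly positive scalar curvature metric everywhere, the same argument with $Y=\emptyset$ (or directly with $R=0$) gives the contradiction.

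The main obstacle, and the step that needs the most care, is making precise the claim that for a uniformly contractible $M$ the relative index $\text{Ind}_{Y,\infty}([D])$ is nonzero — i.e. that the fundamental $K$-homology class genuinely survives in $K_*(C^*_{L,Y,\infty}(P_d(M)))$ rather than being killed by the subgroup $R_YK_*(M)$ coming from a neighborhood of $Y$. This requires knowing that $D_N$, as a Dirac operator on the non-compact piece $N=\overline{\text{Pen}(Y,R)}^c$, still defines a nonzero local $K$-homology class in $K_*(C^*_{L,Y,\infty}(P_d(M)))$; uniform contractibility of $M$ is exactly what guarantees this via the identification of coarse $K$-homology with locally finite $K$-homology and the non-triviality of the relative fundamental class. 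I would handle this by the six-term exact sequence relating $R_YK_*(M)$, $K_*(M)$, and $K_*(C^*_{L,Y,\infty}(M))$ displayed just before the conjecture, tracking $[D]$ through it and using that $[D]$ restricted to $N$ generates a direct summand that is not in the image of $R_YK_*$. Everything else — the finite-propagation Lichnerowicz vanishing, the coarse invariance identifications, and the diagram chase — is routine given Theorem~\ref{pro: vanishing of the boundary index} and the hypothesis that $\mu_{Y,\infty}$ is injective.
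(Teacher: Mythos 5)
Your overall contradiction scheme matches the paper's: combine $\text{Ind}_{Y,\infty}([D])=0$ (from uniform invertibility of $D_N$ and Step~1 of Theorem~\ref{pro: vanishing of the boundary index}) with injectivity of $\mu_{Y,\infty}$ and the non-degeneracy of the fundamental class. However, the first paragraph contains a slip that needs repair: you claim ``the localized index class $[D_N]\in K_*(N)$ maps to $0$ in $K_*(C^*_L(N))$ \ldots\ because $D_N$ is uniformly invertible.'' This is backwards. The algebra $C^*_L(N)$ computes the $K$-homology of $N$, in which $[D_N]$ is a \emph{nonzero} class (the fundamental class of the open manifold $N$). What vanishes, by Lichnerowicz, is the higher index $\text{Ind}([D_N])\in K_*(C^*(N))$, i.e.\ the image under the evaluation map $C^*_L(N)\to C^*(N)$. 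You also write a composition $K_*(N)\to R_YK_*(M)\to K_*(M)$, but the first map does not exist: $R_YK_*(M)$ is localized \emph{near} $Y$, while $N$ is the complement of a neighborhood of $Y$. The correct chain is simply $K_*(N)\to K_*(M)\to K_*(C^*_{L,Y,\infty}(M))$ followed by $\mu_{Y,\infty}$, together with the commuting square with $\text{Ind}:K_*(N)\to K_*(C^*(N))$ from the coarse index map; this is how the paper gets $\mu_{Y,\infty}(\pi([D]))=0$.

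The more serious issue is that your ``main obstacle'' is exactly where the paper does its real work, and your proposed handling (``using that $[D]$ restricted to $N$ generates a direct summand that is not in the image of $R_YK_*$'') is not a proof, nor is it clear a splitting exists. What the paper does is concrete: after injectivity of $\mu_{Y,\infty}$ yields $\pi([D])=0$ and the six-term sequence places $[D]\in\mathrm{im}(R_YK_*(M)\to K_*(M))$, Lemma~\ref{lem: representable K-homology} is used to realize $R_YK_*(M)\cong\lim_S K_*(\text{Pen}(Y,S))$, so $[D]$ comes from some finite penumbra $\text{Pen}(Y,S)$. One then chooses an open set $U\subset\text{Pen}(Y,S)^c$ homeomorphic to $\mathbb{R}^n$ and a nonzero class $\alpha\in K^0(M)$ compactly supported in $U$: the pairing $\langle[D],\alpha\rangle$ is nonzero because $[D]$ restricts to the generator of $K_0(U)$ (this is where the manifold structure and uniform contractibility are genuinely used), yet zero because the supports are disjoint. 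That disjoint-support pairing argument is the missing step you need to supply; the ``direct summand'' phrasing and the six-term bookkeeping alone do not deliver it.
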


We should remark here that Theorem \ref{thm: FCE to PSC infty} provides a refinement of the Gromov-Lawson conjecture on the existence problem of positive scalar curvature on a non-compact complete Riemannian manifold. Here, we do not consider the scalar curvature of the entire manifold, but only consider the asymptotic behavior of the scalar curvature towards infinity at some chosen directions. The validity of Theorem \ref{thm: FCE to PSC infty} tells us that the vanishing of the \emph{relative index} of the Dirac operator not only provides an obstruction to the Gromov-Lawson conjecture, but also can precisely identify from which direction at infinity this obstruction arises.

\begin{proof}[Proof of Theorem \ref{thm: FCE to PSC infty}]
Here, we only prove for the reduced case, the maximal case follows from a similar argument. Take $X\subseteq M$ to be a net with bounded geometry and $Z=X\cap Y$. We claim that if $M$ is uniformly contractible, then $K_*(C^*_{L,Y,\infty}(M))$ is isomorphic to $\lim_{d\to\infty}K_*(C^*_{L,Z,\infty}(P_d(X)))$. The idea of the proof of the above claim is similar with \cite[Theorem 7.3.6]{WY2020}, which proves that for any uniformly contractible metric space, $K_*(M)$ is isomorphic to $\lim_{d\to\infty}K_*(P_d(X))$. By the six-term exact sequence, it suffices to show that $R_YK_*(M)$ is isomorphic to $\lim_{d\to\infty}R_ZK_*(P_d(X))$.

As proved in \cite[Theorem 7.3.6]{WY2020}, one can always find a continuous map $f: P_d(X)\to M$ for any $d\geq 0$ which restricts a coarse equivalence between $Z$ and $Y$. On the other side, there also exists a sufficiently large $d$ such that one can find a continuous map $g: M\to P_d(X)$ such that the composition $f\circ g$ is homotopy equivalent to the identity map. Moreover, $g\circ f: P_d(X)\to P_d(X)\xrightarrow{i}P_{d'}(X)$ is homotopy equivalent to the canonical inclusion map $i: P_d(X)\to P_{d'}(X)$ for sufficiently large $d'$. As we discussed before, $f$ and $g$ induce group homomorphisms which are inverse to each other:
$$f_*: R_YK_*(M)\to\lim_{d\to\infty}R_ZK_*(P_d(X))\quad\text{and}\quad g_*:\lim_{d\to\infty}R_ZK_*(P_d(X))\to R_YK_*(Y).$$
This proves the claim.

Assume for a contradiction that one can find a Riemannian metric $g$ on $M$ such that there exists $\varepsilon_0$ such that
$$\liminf_{R\to\infty, x\in \text{Pen}(Y,R)^c}\kappa_g(x)>2\varepsilon_0>0.$$
Here $\text{Pen}(Y,R)$ is the \emph{closed} $R$-neighbourhood of $Y$. Then there exists $R\geq 0$ such that $\kappa_g(x)\geq\varepsilon_0$ for all $x\in \text{Pen}(Y,R)^c$. Take $D$ to be the Dirac operator on $M$ associated with the spin structure and the Riemannian structure of $M$, then by Lichnerowicz's formula
\begin{equation}\label{eq: Lichnerowicz}D^*D=\nabla^*\nabla+\frac{\kappa_g}4,\end{equation}
where $\nabla^*\nabla$ is the Bochner-Laplacian operator, which is positive. Denote $M_R=\text{Pen}(Y,R)^c$ for simplicity, which is an open submanifold of $M$, then the Dirac operator $D$ restricts to the Dirac operator $D_R$ on $M_R$. Since $\nabla^*\nabla$ is positive and $\kappa_g$ is strictly positive on $M_R$, $D_R$ is invertible, whose higher index should be $0$. Notice that $[D_R]\in K_*(M_R)$ naturally descends to an element $\pi([D_R])\in K_*(C^*_{L,Y,\infty}(M))$ whose index in $K_*(C^*_{Y,\infty}(M))$ should also be $0$. Since the relative coarse assembly map
$$\mu_{Y,\infty}: K_*(C^*_{L,Y,\infty}(M))\to K_*(C^*_{Y,\infty}(M))$$
is an isomorphism, this implies that $\pi([D_R])$ on the $K$-homology side should be $0$. By Step 1 of Proposition \ref{pro: vanishing of the boundary index}, we have that $[D_R]\in K_*(M_R)$ and $[D]\in K_*(M)$ have the same relative index.

By the six-term exact sequence, one has that $[D]\in K_*(M)$ is actually in the image of $R_YK_*(M)$. By Lemma \ref{lem: representable K-homology}, there exists $S>0$ such that $[D]$ is in the image of $K_*(\text{Pen}(Y,S))$. Take a open subset $U$ in $\text{Pen}(Y,S)^c$ which is homeomorphic to $\IR^n$ and some non-zero $K$-theory element $\alpha$ in $K^0(M)$ whose support lies in $U$, then the pairing of $[D]$ and $\alpha$ should never be $0$ since $[D]$ restricts to the generator of $K_0(U)$. However, since $[D]$ is in the image of $K_*(\text{Pen}(Y,S))$, the support of $[D]$ has empty intersection with the support of $\alpha$. This leads to a contradiction.
\end{proof}

As a corollary, we have the following result.

\begin{Cor}\label{cor: scalar curvature problem}
Let $M$ be a closed spin aspherical manifold, $\wt M$ the universal cover of $M$. Then the (maximal) relative coarse Novikov conjecture of $(\wt M, Y)$ for any $Y\subseteq \wt M$ which is not coarse equivalent to $\wt M$ implies the Gromov-Lawson conjecture of $M$.
\end{Cor}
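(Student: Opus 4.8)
The plan is to deduce this directly from Theorem \ref{thm: FCE to PSC infty}. First I would check that the pair $(\wt M, Y)$ satisfies the standing hypotheses of that theorem. Since $M$ is a closed manifold, equip $\wt M$ with the Riemannian metric lifted from any metric on $M$ via the covering $p:\wt M\to M$; then $\wt M$ is complete and has bounded geometry, because every local geometric quantity on $\wt M$ is the pullback along $p$ of the corresponding quantity on the compact manifold $M$, hence is uniformly bounded. Asphericity of $M$ means that $\wt M$ is contractible, and since $\pi_1(M)$ acts on $\wt M$ freely, properly, cocompactly and by isometries, contractibility upgrades to \emph{uniform} contractibility by a standard fundamental-domain argument (cf.\ the discussion around \cite[Theorem~7.3.6]{WY2020}). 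Finally, the spin structure on $M$ lifts to $\wt M$, so $\wt M$ is spin. The remaining hypothesis, that $Y$ is not coarsely equivalent to $\wt M$, is exactly the one imposed in the statement; I would note that this is the condition ensuring $\text{Pen}(Y,R)^c$ stays unbounded for every $R$, so that $\liminf_{R\to\infty,\,x\in\text{Pen}(Y,R)^c}\kappa(x)$ is a genuine ``curvature at infinity'' quantity and Theorem \ref{thm: FCE to PSC infty} is not vacuous (when $Y$ is coarsely equivalent to $\wt M$ the relative Roe algebra is trivial and the relative coarse Novikov conjecture holds for empty reasons).

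Granting this, the argument is short. Assume the (maximal) relative coarse Novikov conjecture holds for $(\wt M, Y)$. Theorem \ref{thm: FCE to PSC infty} then tells us that $\wt M$ admits no Riemannian metric of uniformly positive scalar curvature. I would now argue by contradiction that $M$ carries no metric of positive scalar curvature, which is precisely the Gromov-Lawson conjecture for the closed aspherical manifold $M$. Suppose $g$ were a positive scalar curvature metric on $M$. Compactness of $M$ forces $\kappa_g\geq\varepsilon_0$ for some $\varepsilon_0>0$. The covering map $p$ is a local isometry for the pulled-back metric $\wt g=p^*g$, and scalar curvature is a local isometry invariant, so $\kappa_{\wt g}(x)=\kappa_g(p(x))\geq\varepsilon_0$ for every $x\in\wt M$; thus $\wt g$ is a uniformly positive scalar curvature metric on $\wt M$, contradicting the previous sentence. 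Hence $M$ admits no positive scalar curvature metric.

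I do not expect any real obstacle here: the analytic substance is entirely contained in Theorem \ref{thm: FCE to PSC infty}, and this corollary is a formal packaging of it together with the two classical facts about universal covers of closed manifolds. The only steps needing (routine) care are the passage from contractibility to \emph{uniform} contractibility of $\wt M$ using the cocompact isometric $\pi_1(M)$-action, and recording that positive scalar curvature on the compact base lifts to uniformly positive scalar curvature on the cover; beyond that, the hypothesis on $Y$ serves only to keep Theorem \ref{thm: FCE to PSC infty} meaningful.
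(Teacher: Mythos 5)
Your proof is correct and is essentially the intended deduction; the paper states Corollary~\ref{cor: scalar curvature problem} without an explicit argument precisely because it is meant to follow from Theorem~\ref{thm: FCE to PSC infty} in the way you lay out. You correctly verify the three hypotheses on $\wt M$ (bounded geometry from compactness of $M$, uniform contractibility from asphericity plus the cocompact isometric $\pi_1(M)$-action, spin from the lift of the spin structure) and then observe that any PSC metric on the compact $M$ lifts to a uniformly PSC metric on $\wt M$, contradicting the conclusion of Theorem~\ref{thm: FCE to PSC infty}. The one point worth stating explicitly, which you leave implicit, is that when you run the contradiction with the lifted metric $p^*g$ you are implicitly using that $p^*g$ is quasi-isometric to the reference metric on $\wt M$ by the Milnor--\v{S}varc lemma, so the coarse structure, and hence the Roe algebra and the relative coarse Novikov conjecture for $(\wt M, Y)$, is unchanged; this is what licenses applying Theorem~\ref{thm: FCE to PSC infty} to the same space $\wt M$ after swapping out the Riemannian metric.
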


\subsection{A remark on general ghostly ideals}

Although this paper primarily focuses on ghostly ideals generated by a single subspace $Y$, we should mention that for general ghostly ideals in Roe algebras, one can still replicate the preceding constructions to develop the corresponding relative higher index theory.

Let $U\subseteq \beta X$ be an invariant open subset in the coarse groupoid $G(X)$. The ghostly ideal of $C^*(X)$ associated with $U$ has been defined as
$$I_G(U)=\{T\in C^*(X)\mid \text{for any $\varepsilon$, }\overline{r(\supp_{\varepsilon}(T))}\subseteq U\}.$$
The localization algebra localized in $U$ of $P_d(X)$ can be similarly defined, denoted by $C^*_{L,U}(P_d(X),I_G(U))$, to be the subalgebra of $C^*_L(P_d(X))$ of all functions that only take values in $I_G(U)$. Thus, we also have the relative coarse assembly map between the two quotient algebras similar to before:
$$\mu_{U,\infty}: \lim_{d\to\infty}K_*\left(\frac{C^*_L(P_d(X))}{C^*_{L,U}(P_d(X),I_G(U))}\right)\to K_*\left(\frac{C^*(X)}{I_G(U)}\right).$$
We shall call $\mu_{U,\infty}$ the coarse relative assembly map for $(X,U)$.

As discussed in \cite[Definition 6.1]{CW2004}, the notion of \emph{ideals $\bf L$ of a space} $X$ is defined as a family of subsets of $X$ satisfying that\begin{itemize}
\item if $Y\in\textbf{L}$ and $Z\subseteq Y$; then $Z\in\textbf{L}$;
\item if $Y,Z\in\textbf{L}$, then $Y\cup Z\in\textbf{L}$;
\item if $Y\in\textbf{L}$, then $\text{Pen}(Y,R)\in\textbf{L}$.
\end{itemize}An invariant open set $U$ determines such an ideal by
$$\textbf{L}(U)=\{Y\subseteq X\mid \overline{Y}\subseteq U\subseteq \beta X\},$$
see \cite[Proposition 6.2]{CW2004}. We define a \emph{filter} $\omega_U$ on the set $X$ by taking the complement of $\textbf{L}(U)$, i.e.,
$$Y\in\omega_U\iff Y^c\in \textbf{L}(U).$$
It is direct from the definition that $\omega_U$ is a filter. Moreover, this filter is \emph{coarse} in the sense of \cite[Section 4.2]{Georgescu}, i.e.,
$$Y\in\omega_U\iff Y^{(r)}\in \omega_U\text{ for any $r>0$},$$
where $Y^{(r)}=\{x\in X\mid d(x,Y^c)> r\}$.

Let $M$ be a uniformly contractible manifold and $X\subseteq M$ a net with bounded geometry. For a filter $\omega$ on $X$, we define the its close extension $\overline{\omega}$ on $M$ to be
$$\overline{\omega}=\{Z\subseteq M\mid Z\text{ is close, }Z\cap X\in \omega\}.$$
For a real function $f: M\to\IR$ and, we define the limit inferior of $f$ associated with $\overline{\omega}$ to be
$$\liminf_{x\to\overline{\omega}}f(x)=\sup_{Y\in\overline{\omega}}\inf_{x\in Y}f(x).$$
One should notice that, for a subset $Y\subseteq X$,
$$\liminf_{R\to\infty, x\in \text{Pen}(Y,R)^c}f(x)=\liminf_{x\to\overline{\omega_{U_Y}}}f(x).$$
Let $U\subseteq \beta X\subseteq G(X)$ be an invariant open subset. One can similarly with Theorem \ref{thm: FCE to PSC infty} prove that if the relative coarse assembly map $\mu_{U,\infty}$ is injective, then $\liminf_{x\to\overline{\omega_{U}}}\kappa(x)$ can never be positive.

In fact, the conclusions presented later in the whole of this paper can also be generalized to this more general setting. However, the case of invariant open sets generated by a single set $Y$ offers stronger intuitiveness and is already sufficiently enlightening. Therefore, in the subsequent discussion, we will only consider invariant open sets generated by a single set $Y$.

\section{Relative fibred coarse embeddings}\label{sec: coarsely proper algebra for FCE}

In this section, we introduce a relative version of \emph{fibred coarse embedding} as a generalization of the fibred coarse embedding. Additionally, we define the concept of \emph{coarsely proper algebras} and study the \emph{twisted assembly map} with coefficients in these algebras. Our main technical result establishes that the twisted assembly map is always an isomorphism. This construction will provide a framework for proving the relative coarse Baum-Connes conjecture. Moreover, we also provide a conceptual description of \emph{uniformly flat Bott generators} for coarsely proper algebras. Under this framework, we prove the relative coarse Baum-Connes/Novikov conjecture in several cases.

\subsection{Relative fibred coarse embedding}\label{section 3.1}
In this subsection, we will introduce the concept of relative fibred coarse embedding, which serves as a generalization of the fibred coarse embedding into Hilbert space in \cite{CWY2013}.

\begin{Def}[\cite{CWY2013}]\label{def: FCE}
A  metric space $(X,d)$ is said to admit a fibred coarse embedding into Hilbert space $H$ if there exist \par 
$\bullet$ a field of Hilbert spaces $(H_{x})_{x\in X}$ over $X$;\par
$\bullet$ a section $s:X\rightarrow\bigsqcup_{x\in X}H_{x}$ ($\text{i.e.}\ s(x)\in H_{x}$);\par 
$\bullet$ two non-decreasing functions $\rho_{1}$ and $\rho_{2}$ from $\mathbb{R}_{+}$ to $\mathbb{R}_{+}$ with $\lim\limits_{r\rightarrow\infty}\rho_{i}(r)=\infty\ (i=1,2)$\\
such that for any $r>0$ there exists a bounded subset $K\subset X$ for which there exists a ``trivialization"
$t_{C}:(H_{x})_{x\in C}\rightarrow 
C\times H$ for each subset $C\subset X\setminus K$ of diameter less than $r$, i.e. a map from $(H_{x})(x\in C)$ to the constant field $C\times H$ over $C$ such that the restriction of $t_{C}$ to the fiber $H_{x}(x\in C)$ is an affine isometry $t_{C}(x):H_{x}\rightarrow H$, satisfying
\begin{enumerate}
\item [(1)] for any $x,y\in C$, $\rho_{1}(d(x,y))\leqslant\|t_{C}(x)(s(x))-t_{C}(y)(s(y))\|\leqslant \rho_{2}(d(x,y))$;
\item [(2)] for any subsets $C_{1},C_{2}\subset X\setminus K$ of diameter less than $r$ with $C_{1}\cap C_{2}\neq \emptyset$, there exists an affine isometry $t_{C_{1}C_{2}}: H\rightarrow H$ such that $t_{C_{1}}(x)\circ t^{-1}_{C_{2}}(x)=t_{C_{1}C_{2}}$ for all $x\in C_{1}\cap C_{2}$.
\end{enumerate}
\end{Def}

A metric space that admits a fibred coarse embedding into Hilbert space is abbreviated as an FCE space for simplicity. It was proved in \cite{CWY2013, GLWZ2024} that the (maximal) boundary coarse Baum-Connes conjecture holds for FCE spaces. As a result, the maximal coarse Baum-Connes conjecture and the coarse Novikov conjecture hold for FCE spaces. One can also replace the Hilbert space in Definition \ref{def: FCE} with any other metric space as a model space, including Hadamard manifolds, and $\ell^p$-spaces with $1\leq p <\infty$ as typical examples.

To deal with the relative coarse Baum-Connes conjecture, we shall refine the notion of fibred coarse embedding to the following version.

\begin{Def}\label{def: partial FCE}
Let $M$ be a metric space that serves as a model space (such as a Hilbert space, Hadamard manifolds, or \(\ell^p\)-spaces with \(1 \leq p < \infty\)). A metric space $X$ is said to admit a \emph{relative fibred coarse embedding into $M$ w.r.t. a subspace $Y\subseteq X$} if there exist\par
$\bullet$ a field of spaces $(M_x)_{x\in X}$ over $X$ such that each $M_x$ is isometric to $M$;\par
$\bullet$ a section $s:X\to\bigsqcup_{x\in X}M_x$, i.e. $s(x)\in M_x$ for each $x\in X$;\par
$\bullet$ two non-decreasing functions $\rho_-, \rho_+: \IR_+\to \IR_+$ with $\lim_{r\to\infty}\rho_{\pm}(r)=\infty$\\
such that, for any $R>0$, there exists $S>0$ such that there exists a trivialization
$$t_{x,R}:(M_z)_{z\in B(x,R)}\to B(x,R)\times M$$
for each $x\in X\backslash \text{Pen}(Y,S)$ satisfying the following conditions:
\begin{enumerate}
    \item [(1)] for any $z_1,z_2\in B(x,R)$, we have that
$$\rho_-(d(z_1,z_2))\leq d_M(t_{x,R}(z_1)(s(z_1)),t_{x,R}(z_2)(s(z_2)))\leq \rho_+(d(z_1,z_2));$$
\item [(2)] for any $x,y\in X\backslash \text{Pen}(Y,S)$ with $B(x,R)\cap B(y,R)\ne \emptyset$, there exists isometry $t_{xy,R}:M\to M$ such that $t_{x,R}(z)\circ t^{-1}_{y,R}(z)=t_{xy,R}$ for all $z\in B(x,R)\cap B(y,R)$.
\end{enumerate}
\end{Def}

Note that if the selected subset $Y$ is bounded, then a relative fibred coarse embedding of $X$ into $M$ w.r.t. $Y$ recovers to a fibred coarse embedding of $X$ into $M$. To get more intuition for the relative fibred coarse embedding structure, we give the following example.

Let $\{X_i\}_{i\in\{1,\cdots, N\}}$ be \emph{sparse} metric spaces, i.e., each $X_i=\bigsqcup_{n\in\IN}X_{i,n}$ is a coarse disjoint union of finite metric spaces. The product space
$$X=\prod_{i=1}^NX_i=\bigsqcup_{n_1,\cdots,n_N\in\IN}X_{1,n_1}\times\cdots\times X_{N,n_N},$$
endowed with the $\ell^2$-product metric, is actually a disjoint union of the countable sequence $(X_{1,n_1}\times\cdots\times X_{N,n_N})_{n_1,\cdots,n_N}$. For simplicity, we shall denote
$$X_{n_1,\cdots,n_N}=X_{1,n_1}\times\cdots\times X_{N,n_N}.$$
For each $i\in\{1,\cdots, N\}$ and $M\in\IN$, define
$$F^{(i)}_M=\bigcup_{n_i=1}^M\left(\bigcup_{n_1,\cdots,n_{i-1},n_{i+1},\cdots, n_N\in\IN}X_{n_1,\cdots,n_N}\right)$$
Such $F^{(i)}_M$ is called the \emph{$i$-th face with thickness $M$} of $X$. In particular, $F^{(i)}_1$ is called the \emph{$i$-th face} of $X$ for simplicity. Define $$F_M=\bigcup_{i=1}^NF^{(i)}_M,$$ which is called the $M$\emph{-boundary} of $X$. 

\begin{Pro}\label{pro: finite product of FCE spaces}
Let $(X_i)_{i=1}^N$ be a finite sequence of bounded geometric spaces that admits a fibred coarse embedding into Hilbert space. Assume that each $X_i$ is a coarse disjoint union of finite spaces as discussed above. Then the product space $X=\prod_{i=1}^NX_i$ admits a relative fibred coarse embedding into Hilbert space w.r.t. its $1$-boundary $F_1$.
\end{Pro}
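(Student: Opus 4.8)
The plan is to construct, for the product space $X = \prod_{i=1}^N X_i$, a field of Hilbert spaces, a section, and trivializations over balls centered outside a large penumbra of the $1$-boundary $F_1$, by tensoring together the fibred coarse embedding data of the individual factors $X_i$. The key observation is that a point $x = (x^{(1)}, \dots, x^{(N)})$ lying far from $F_1$ must have \emph{every} coordinate $x^{(i)}$ lying in a component $X_{i, n_i}$ with $n_i$ large; more precisely, if $x \notin \text{Pen}(F_1, S)$ then $d(x^{(i)}, F^{(i)}_1) > S/\sqrt{N}$ roughly, which forces $x^{(i)}$ to sit outside a bounded subset $K_i \subseteq X_i$ whose size grows with $S$. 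This is precisely the regime in which each $X_i$, being an FCE space, supplies local trivializations of its own Hilbert field $(H^{(i)}_{z})_{z \in X_i}$.

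First I would fix $R > 0$. For each factor $X_i$, the fibred coarse embedding of $X_i$ gives a bounded set $K_i \subseteq X_i$ and trivializations $t^{(i)}_C$ over every subset $C \subseteq X_i \setminus K_i$ of diameter less than $R$ (taking $r = R$ in Definition \ref{def: FCE}). Choose $S$ large enough that $x \notin \text{Pen}(F_1, S)$ implies each coordinate projection of $B(x,R)$ is a subset of $X_i \setminus K_i$ of diameter less than $R$ — this is possible because the components $X_{i,n}$ are coarsely disjoint, so a ball of radius $R$ in $X$ centered far from $F_1$ projects into a single far-out component $X_{i,n_i}$ in each coordinate, and its projection has diameter $\le R$. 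Then I define the field over $X$ by $M_x = \bigwox_{i=1}^N H^{(i)}_{x^{(i)}}$ (Hilbert space tensor product, all fibres isometric to $H^{\wox N} \cong H$), the section by $s(x) = \bigotimes_i s^{(i)}(x^{(i)})$ — here one uses the affine/exponential trick turning an affine-isometric section into a genuine vector, or more simply works with the affine structure directly as in \cite{CWY2013} — and the trivialization $t_{x,R}$ over $B(x,R)$ as the tensor product $\bigotimes_i t^{(i)}_{C_i}$ of the coordinate trivializations over the coordinate projections $C_i$ of $B(x,R)$. Condition (1) of Definition \ref{def: partial FCE} follows from the $\ell^2$ Pythagorean relation: $d_M(t_{x,R}(z_1)s(z_1), t_{x,R}(z_2)s(z_2))^2 = \sum_i \|t^{(i)}_{C_i}(z_1^{(i)}) s^{(i)}(z_1^{(i)}) - t^{(i)}_{C_i}(z_2^{(i)}) s^{(i)}(z_2^{(i)})\|^2$, which is squeezed between $\sum_i \rho_1^{(i)}(d(z_1^{(i)}, z_2^{(i)}))^2$ and $\sum_i \rho_2^{(i)}(d(z_1^{(i)}, z_2^{(i)}))^2$; combining with $d(z_1,z_2)^2 = \sum_i d(z_1^{(i)}, z_2^{(i)})^2$ produces the required new control functions $\rho_\pm$. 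Condition (2), the cocycle/compatibility condition on overlaps, follows coordinatewise: if $B(x,R) \cap B(y,R) \neq \emptyset$ then each coordinate overlap $C_i^x \cap C_i^y \neq \emptyset$, and the coordinate cocycles $t^{(i)}_{C_i^x C_i^y}$ tensor together to give $t_{xy,R} = \bigotimes_i t^{(i)}_{C_i^x C_i^y}$.

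The main obstacle I anticipate is purely bookkeeping rather than conceptual: one must verify carefully that a ball $B(x,R)$ in the $\ell^2$-product metric, when $x$ is far from $F_1$, really does project in each coordinate into a \emph{single} coarse component $X_{i,n_i}$ and with diameter $\le R$ — this uses the fact that distinct components of $X_i$ are eventually arbitrarily far apart, so for $R$ fixed and $n_i$ large no radius-$R$ ball can straddle two components. There is also a minor subtlety in that the functions $\rho_1^{(i)}, \rho_2^{(i)}$ may differ across $i$; one takes $\rho_-(t) = \min_i \rho_1^{(i)}(t/\sqrt N)$ (adjusted so it is still non-decreasing and proper) and $\rho_+(t) = (\sum_i \rho_2^{(i)}(t)^2)^{1/2}$, and checks these are non-decreasing with limit $\infty$. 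A further routine point is passing from affine-isometric trivializations to the linear-isometry tensor product framework; this is handled exactly as in \cite{CWY2013} (the collection of affine isometries of a Hilbert space tensors naturally, or one linearizes). None of these steps presents a genuine difficulty, so the proof is essentially a structured verification that the FCE data of the factors assembles, away from $F_1$, into relative FCE data for the product.
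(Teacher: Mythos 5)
Your overall strategy — assembling the factor‑level fibred coarse embedding data into relative fibred coarse embedding data for the product away from the $1$-boundary — matches the paper's proof. The step where you show that a radius-$R$ ball centered far from $F_1$ projects, in each coordinate, into a single far-out component of $X_i$, and the resulting choice of the new control functions, are also in line with what the paper does (the paper actually works with the larger box $\prod_i B(x_i,R) \supseteq B(x,R)$, but this is a cosmetic difference).

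However, there is a concrete error in your construction: you put $M_x = \bigwox_i H^{(i)}_{x^{(i)}}$ and $s(x) = \bigotimes_i s^{(i)}(x^{(i)})$, a Hilbert space \emph{tensor} product, and then invoke the ``Pythagorean relation''
\[
\Bigl\|\textstyle\bigotimes_i u_i - \bigotimes_i v_i\Bigr\|^2 = \sum_i \|u_i - v_i\|^2.
\]
This identity is \emph{false} for tensor products: for $N=2$ it reads $\|u_1\|^2\|u_2\|^2 + \|v_1\|^2\|v_2\|^2 - 2\mathrm{Re}\langle u_1,v_1\rangle\langle u_2,v_2\rangle$, which is not $\|u_1-v_1\|^2 + \|u_2-v_2\|^2$ (take $u_1 = v_1$ with $\|u_1\| \neq 1$). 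The identity you want is precisely the one for the \emph{direct sum} $\bigoplus_i H^{(i)}_{x^{(i)}}$, and this is what the paper uses. The direct sum is also what is needed structurally: the trivializations $t_C^{(i)}(x)$ are \emph{affine} isometries, and affine isometries assemble naturally under direct sum ($\bigoplus_i A_i$ is again affine isometric) but do not tensor in the affine category — one would need the exponential/Fock-space trick you allude to, which changes the norm of a difference of image vectors nonlinearly and would force you to rederive the control functions from scratch. The fix is simply to replace every $\otimes$ in your proposal by $\oplus$ (so $M_x = \bigoplus_i H^{(i)}_{x^{(i)}}$, $s(x) = (s^{(1)}(x^{(1)}),\dots,s^{(N)}(x^{(N)}))$, $t_{x,R} = \bigoplus_i t^{(i)}_{C_i}$), after which your Pythagorean computation, your choice of $\rho_\pm$, and your verification of the cocycle condition all go through and coincide with the paper's argument up to minor bookkeeping.
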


First, we consider the case of the product of two FCE spaces. Assume that $X=(X_{n})_{n\in\IN}$ and $Y=(Y_{m})_{m\in\IN}$ are two sequences of metric spaces that admit a fibred coarse embedding into Hilbert space. Note that the product space $X\times Y$ does not necessarily admit a fibred coarse embedding into Hilbert space. It can be understood as follows. Let $\Ga$ be a residually finite group, and let $\{\Ga_n\}_{n\in\IN}$ be a \textit{filtration} of $\Ga$, which means that
$$\min\{|\gamma|\mid \gamma\ne e,\gamma\in \Ga_n\}\to\infty\text{ as }n\to\infty,$$
where $|\cdot|:\Ga\to\IR_+$ is a proper length function on $\Ga$. 
By \cite[Theorem 1.1]{CWW2013}, the box space $$\Box_{\{\Ga_n\}}\Ga=\bigsqcup_{n\in\IN}\Ga/\Ga_n$$ admits a fibred coarse embedding into Hilbert space if and only if $\Ga$ is a-T-menable. Now, consider $G$ to be another residually finite, a-T-menable group, and let $\{G_m\}_{m\in\IN}$ be a filtration of $G$. Then, the product of box spaces
$$\Box_{\{\Ga_n\}}\Ga\times \Box_{\{G_m\}}G=\bigsqcup_{n,m\in\IN}(\Ga\times G)/(\Ga_n\times G_m)$$
is a coarse disjoint union of finite quotient groups of $\Ga\times G$. It is straightforward to see that $\Ga\times G$ remains a-T-menable. However, the collection $\{\Ga_n\times G_m\}$ does not constitute a filtration of $\Ga\times G$, as there are infinitely many indices $(n,m)\in\IN^2$ for which $$\min\{|(\gamma,g)|\mid (\gamma,g)\ne e,\gamma\in \Ga_n, g\in G_m\}<R$$ for any $R>0$. This illustrates why the product of two fibred coarsely embeddable spaces may not permit a fibred coarse embedding into Hilbert space.

\begin{proof}[Proof of Proposition \ref{pro: finite product of FCE spaces}]
Without loss of generality, we assume that the family $(X_i)_{i=1}^N$ fibred coarsely embeds into the same Hilbert space $\H$ with the same controlling function $\rho_{\pm}$. We then have that
\begin{enumerate}
\item [(1)] a field of Hilbert space $(\H_{x_i})_{x_i\in X_i}$ over $X_i$ such that each $\H_{x_i}$ is isometric to $\H$;
\item [(2)] a section $s_i:X_i\to\bigsqcup_{x_i\in X_i}\H_{x_i}$, i.e. $s_i(x_i)\in \H_{x_i}$ for each $x_i\in X_i$. 
\end{enumerate}
Set $\wt\H=\bigoplus_{i=1}^N\H$. For each $x=(x_1,\cdots,x_N)\in X$, set
$$\wt\H_x=\bigoplus_{i=1}^N\H_{x_i},$$
and $s:X\to\bigsqcup_{x\in X}\wt\H_{x}$ to be
$$s(x)=(s_1(x_1),\cdots,s_N(x_N)).$$
For each $R>0$, there exists $M_i>0$ for each $i$ such that there exists a trivialization
$$t_{x_i,R}: (\H_{z_i})_{z_i\in B(x_i,R)}\to B(x_i,R)\times \H$$
for each $x_i\in \bigsqcup_{n\geq M_i}X_{i,n}$. Take $M=\max\{M_i\mid i=1,\cdots,N\}$, then the $M$-boundary $F_M$ forms a bounded neighborhood of $F_1$. For each $x=(x_1,\cdots,x_N)\in X\backslash F_M$, we define the trivialization
$$t_{x,R}: (\wt\H_{z})_{z\in \prod_{i=1}^NB(x_i,R)}\to \prod_{i=1}^NB(x_i,R)\times \wt\H$$
to be
$$(t_{x,R}(z))(v)=\left((t_{x_1,R}(z_1))(v_1),\cdots,t_{x_N,R}(z_N))(v_N)\right),$$
where $z=(z_1,\cdots,z_N)$, and $v=(v_1,\cdots,v_N)$. Since $B(x, R)$ is contained in $\prod_{i=1}^NB(x_i, R)$, we here replace the ball neighborhood with the square neighborhood. We then check that this trivialization satisfies the two conditions in Definition \ref{def: partial FCE}.

For any $z=(z_1,\cdots,z_N), z'=(z_1',\cdots,z_N')\in B(x,R)$, one can check that
\[\|t_{x,R}(z)(s(z))-t_{x,R}(z')(s(z'))\|^2=\sum_{i=1}^N\|t_{x_i,R}(z_i)(s_i(z_i))\|^2\leq N\cdot\rho^2_+(d(z,z')).\]
On the other hand, there exists at least one pair $(z_i,z_i')$ such that $d(z_i,z_i')\geq d(z,z')/N$. Thus we also have that
\[\sum_{i=1}^N\|t_{x_i,R}(z_i)(s_i(z_i))\|^2\geq \rho_-^2(d(z,z')/N).\]
Just take $\wt\rho_-(t)=\rho_-(t/N)$ and $\wt\rho_+(t)=\sqrt N\cdot \rho_+(t)$, and we have that
\[\wt\rho_-(d(z,z'))\leq \|t_{x,R}(z)(s(z))-t_{x,R}(z')(s(z'))\|\leq \wt\rho_+(d(z,z')).\]
This proves the first one.

For the second one, for any $x,y\in X\backslash F_M$ with $B(x,R)\cap B(y,R)\ne\emptyset$, we can check that $t_{x,R}(z)\circ t_{y,R}^{-1}(z)$ is equal to $t_{x_i,R}(z_i)\circ t_{y_i,R}^{-1}(z_i)=t_{x_iy_i,R}$ when we restrict it on the $i$-th direct sum component of $\wt\H=\bigoplus\H$. Therefore, 
\[t_{x,R}(z)\circ t_{y,R}^{-1}(z)=\bigoplus_{i=1}^Nt_{x_iy_i,R},\]
which is an isometry on $\wt\H$ independent on the choice of $z$. This finishes the proof.
\end{proof}

As illustrated in Figure \ref{fig: products of two FCE spaces}, $F_{1}$ corresponds precisely to the first row and the first column (the yellow region) and is referred to as the ``$1$-boundary" of $X$.

\begin{figure}[h]
\centering
\includegraphics[width=0.6\linewidth]{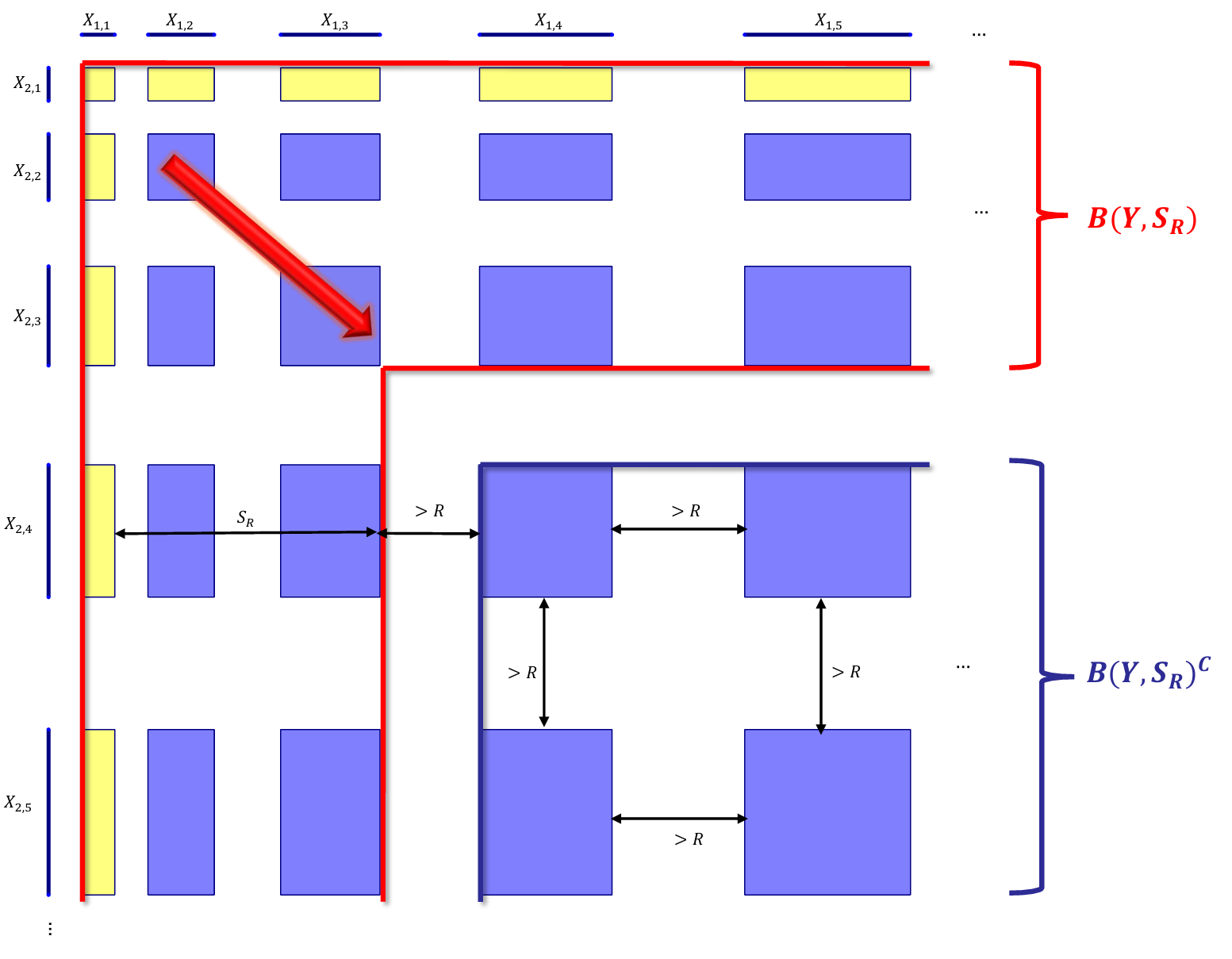}
\caption{Products of two FCE spaces}
\label{fig: products of two FCE spaces}
\end{figure}

\begin{Rem}
Note that the model space in Proposition \ref{pro: finite product of FCE spaces} can be substituted with a Hadamard manifold (i.e., a simply connected, complete, non-positively curved Riemannian manifold). The proof would remain unchanged.
Moreover, one can replace the model space with an $\ell^p$ space ($p\in[1,\infty)$). In this case, one should equip it with the $\ell^p$ product metric, which is coarsely equivalent to the $\ell^2$ product metric, given that there are only finitely many components in the product. Additionally, the model space should be the $\ell^p$ direct sum.

More generally, if we remove the assumption that $X_{1}$ and $X_{2}$ can each be expressed as a coarse disjoint union of finite metric spaces, the product space $X$ will admit a relative fibred coarse embedding into Hilbert space w.r.t.  $X_{1}\times\{x_2\}\cup X_{2}\times \{x_1\}$, where $x_1\in X_1$, $x_2\in X_2$ are based points. The core idea of the proof remains the same.
\end{Rem}

The following theorem is one of the main results in this paper. We shall use the remaining part of this section to prove it.

\begin{Thm}\label{thm: RCBC for RFCE spaces}
If $X$ admits a relative fibred coarse embedding into Hilbert space w.r.t. $Y\subseteq X$, then the relative coarse Baum-Connes conjecture for $(X,Y)$ and its maximal version hold, simultanesouly, i.e., the relative coarse assembly maps $\mu_{Y,\infty}$ and $\mu_{\max,Y,\infty}$ are both isomorphisms.

As a consequence, the canonical quotient map:
$$\pi_*: K_*(C^*_{\max,Y,\infty}(X))\to K_*(C^*_{Y,\infty}(X))$$
is an isomorphism.
\end{Thm}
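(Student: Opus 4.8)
The plan is to run a Dirac--dual-Dirac machine adapted to the relative setting, exactly as announced in the introduction: produce a \emph{coarsely proper algebra} $A$ associated to the relative fibred coarse embedding and a family of \emph{uniformly flat Bott generators} on it, then compare the untwisted relative assembly map to a twisted one with coefficients in $A$ via Bott and Dirac maps. I would organise the argument along the following steps.

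\emph{Step 1 (Twisted algebras).} Using the relative fibred coarse embedding of $X$ into $H$ with respect to $Y$, I would build a $C^*$-algebra $\mathcal{A}$ of $C_0(H)$-type (built fibrewise over the $H_x$, cut down by the trivializations away from $\mathrm{Pen}(Y,S)$, and with a Clifford/Bott grading as usual) and form the twisted Roe algebra $C^*_{Y,\infty}(P_d(X),\mathcal{A})$ and twisted localization algebra $C^*_{L,Y,\infty}(P_d(X),\mathcal{A})$, in both the reduced and maximal norms. The key structural point, which I would isolate as a lemma, is that since the embedding is ``genuine'' only outside $\mathrm{Pen}(Y,S)$, and the ghostly/geometric ideals used to form the relative algebras precisely kill the part supported near $Y$, the twisted objects are insensitive to the ambiguity near $Y$; this is where the relative (as opposed to global) nature of the hypothesis is essential, and it is the place the maximal and reduced pictures are forced to agree (cf.\ the coincidence of the finite-propagation parts of $C^*(X,Y)$ and $I_G(Y)$ noted after Definition~\ref{def: localization algebra at infinity}).

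\emph{Step 2 (Twisted assembly is an isomorphism).} I would prove that the twisted relative assembly map
\[
\mu^{\mathcal A}_{Y,\infty}\colon \lim_{d\to\infty}K_*\bigl(C^*_{L,Y,\infty}(P_d(X),\mathcal A)\bigr)\longrightarrow K_*\bigl(C^*_{Y,\infty}(X,\mathcal A)\bigr)
\]
is an isomorphism, and likewise maximally. This is the analogue of the ``the assembly map with coefficients in a coarsely proper algebra is always an isomorphism'' statement promised in the section's introduction; it follows from a Mayer--Vietoris/cutting-and-pasting argument reducing to uniformly bounded pieces, where on each piece the coefficient algebra is a genuine $C_0(\text{Euclidean ball}\subseteq H)$ and the Bott periodicity/local index computation trivializes the assembly map. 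Here one also shows the reduced and maximal twisted assembly maps agree because the coefficient algebra, being locally $C_0$ of a finite-dimensional affine space, makes the relevant crossed products nuclear at the relevant scales, so the canonical map $\mu^{\mathcal A}_{\max,Y,\infty}\to\mu^{\mathcal A}_{Y,\infty}$ is an isomorphism on both sides.

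\emph{Step 3 (Bott and Dirac maps, and the sandwich).} The uniformly flat Bott generators give a Bott map
$\beta\colon C^*_{L,Y,\infty}(P_d(X))\to C^*_{L,Y,\infty}(P_d(X),\mathcal A)$ (and on the Roe-algebra level), uniform flatness being exactly what is needed for $\beta$ to have finite/small propagation and hence to land in the relative algebras; the fibrewise Dirac operators on $H$ give a Dirac map $\alpha$ in the other direction. One checks $\alpha\circ\beta$ induces the identity on $K$-theory (this is the uniform Bott periodicity computation, localized and made uniform via flatness), giving a commuting square
\[
\begin{tikzcd}
\lim_d K_*(C^*_{L,Y,\infty}(P_d(X))) \arrow[r,"\beta_*"] \arrow[d,"\mu_{Y,\infty}"'] & \lim_d K_*(C^*_{L,Y,\infty}(P_d(X),\mathcal A)) \arrow[d,"\mu^{\mathcal A}_{Y,\infty}"] \\
K_*(C^*_{Y,\infty}(X)) \arrow[r,"\beta_*"] & K_*(C^*_{Y,\infty}(X,\mathcal A))
\end{tikzcd}
\]
in which $\beta_*$ is split injective (split by $\alpha_*$) and the right vertical map is an isomorphism by Step~2; hence $\mu_{Y,\infty}$ is injective (the relative coarse Novikov conjecture, which also yields Theorem~F). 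For surjectivity/isomorphism one uses that, for the Hilbert-space model, the Bott generators admit a $K$-theory inverse (full Bott periodicity, not just the one-sided statement), so $\beta_*$ is an isomorphism and then $\mu_{Y,\infty}$, sandwiched between two isomorphisms, is an isomorphism; the identical diagram in the maximal norm gives the maximal relative coarse Baum--Connes conjecture. Since the reduced and maximal twisted assembly maps agree (Step~2) and the Bott/Dirac maps are defined uniformly in both norms, we get the final assertion: the commuting square
\[
\begin{tikzcd}
\lim_d K_*(C^*_{L,\max,Y,\infty}(P_d(X))) \arrow[r,"\mu_{\max,Y,\infty}","\cong"'] \arrow[d,"\pi_*"'] & K_*(C^*_{\max,Y,\infty}(X)) \arrow[d,"\pi_*"] \\
\lim_d K_*(C^*_{L,Y,\infty}(P_d(X))) \arrow[r,"\mu_{Y,\infty}","\cong"'] & K_*(C^*_{Y,\infty}(X))
\end{tikzcd}
\]
has both horizontal maps isomorphisms and the left vertical map an isomorphism (the localization algebras at infinity have the same $K$-theory in both norms, as recorded after Definition~\ref{def: localization algebra at infinity}), forcing the right vertical $\pi_*\colon K_*(C^*_{\max,Y,\infty}(X))\to K_*(C^*_{Y,\infty}(X))$ to be an isomorphism.

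\emph{Main obstacle.} I expect the genuinely hard part to be Step~1--2 done \emph{uniformly and relatively at once}: constructing the coarsely proper coefficient algebra so that (a) the Bott generators are uniformly flat with propagation controllable by the embedding's parameters $S=S(R)$, and (b) the twisted assembly map is an isomorphism in \emph{both} norms by a single Mayer--Vietoris argument over the Rips-complex skeleta, with all estimates surviving the passage $d\to\infty$. The subtlety specific to the relative setting is ensuring that the ambiguity of the fibred structure inside $\mathrm{Pen}(Y,S)$ genuinely disappears after quotienting by the ghostly ideal (reduced side) and the geometric ideal (maximal side) \emph{compatibly}, so that the two quotient pictures can be compared on the nose; everything else is a relativization of the now-standard arguments of \cite{CWY2013, GLWZ2024}.
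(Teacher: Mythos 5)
Your proposal is correct and follows essentially the same Dirac--dual-Dirac strategy as the paper: reduce to the sparse case, construct a coarsely proper algebra $\A$ associated to the relative fibred coarse embedding, prove the twisted relative assembly maps (in both norms) are isomorphisms, use the uniformly flat Bott generators to build Bott/Dirac maps $\beta$ and $\alpha$ with $\alpha\circ\beta=\mathrm{id}$ on $K$-theory, and conclude by diagram chasing; the final deduction that $\pi_*$ is an isomorphism via the commuting square with the untwisted localization algebras is exactly the intended argument, using that $C^*_{L,\max,Y,\infty}$ and $C^*_{L,Y,\infty}$ share the same $K$-theory. The only small deviation is your appeal to nuclearity in Step 2 to identify the maximal and reduced twisted pictures, which is a valid heuristic from the standard FCE literature but is not actually needed: the paper simply proves both $\mu_{\max,\A}$ and $\mu_\A$ are isomorphisms outright (its Theorem \ref{thm: twisted partial assembly map}), and the comparison of norms is only invoked at the untwisted localization level at the very end.
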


To prove Theorem \ref{thm: RCBC for RFCE spaces}, we shall first reduce the theorem to a special case.

\begin{Def}\label{def: sparse with respect to Y}
Let $(X,d)$ be a metric space with bounded geometry and $Y$ be a subspace of $X$. Then $X$ is said to be \emph{sparse with respect to $Y$} if $X=\bigsqcup_{n\in\mathbb{N}} X_{n}$ and 
\begin{itemize}
\item[(1)] $Y$ is a a subset of $X_0$ and the canonical inclusion $Y\hookrightarrow X_0$ is a coarse equivalence;
\item[(2)] for each $n\in\mathbb{N}$, there exists $S_{n}>0$ such that $X_{n}\subseteq \text{Pen}(Y,S_{n})$;
\item[(3)]$d(X_{n},X_{m})\rightarrow\infty$ as $m+n\rightarrow\infty$ and $m\neq n$.
\end{itemize} 
\end{Def}

When $Y$ is bounded, then $X$ is sparse with respect to $Y$ is just a coarse disjoint union of a sequence of finite spaces, i.e., $X$ is a sparse space.

\begin{Exa}\label{exa: relative sparse}
Let $(X_i)_{i=1}^N$ be a finite sequence of sparse spaces. Then $\prod_{i=1}^NX_i$ is sparse with respect to its $1$-boundary. As Figure \ref{fig: products of two FCE spaces}, in the direction away from the 1-boundary along the diagonal (represented by the red arrows), the space becomes increasingly dispersed.
\end{Exa}

\begin{Pro}\label{pro: reduction to sparse spaces}
To prove Theorem \ref{thm: RCBC for RFCE spaces}, it suffices to prove it for $X$, which is sparse with respect to $Y$.
\end{Pro}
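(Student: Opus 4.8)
The plan is to pass from a general pair $(X,Y)$ with a relative fibred coarse embedding to a pair $(\wt X, \wt Y)$ that is sparse with respect to $\wt Y$ and has the same relative coarse assembly map, in a manner analogous to the standard reduction of the coarse Baum–Connes conjecture to coarse disjoint unions (cf.\ the ``space of probability measures / sparsification'' trick of Yu and the analogous reduction for box spaces). First I would produce, out of the data of Definition \ref{def: partial FCE}, an exhausting sequence of scales $R_1<R_2<\cdots$ with associated thickenings $\text{Pen}(Y,S_k)$ such that on $X\setminus\text{Pen}(Y,S_k)$ the field $(M_x)$ is trivialized over every ball of radius $R_k$. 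Using these, I would build a new space $\wt X = \bigsqcup_{k} \wt X_k$ where $\wt X_0$ is a net of (a bounded neighbourhood of) $Y$ coarsely equivalent to $Y$, set $\wt Y=\wt X_0$, and each $\wt X_k$ ($k\geq 1$) is an isometric copy of the annular region $\text{Pen}(Y,S_{k+1})\setminus\text{Pen}(Y,S_k)$ (or a suitable net thereof), re-metrized so that $d(\wt X_k,\wt X_m)\to\infty$ as $k+m\to\infty$, $k\neq m$. By construction $\wt X$ is sparse with respect to $\wt Y$ in the sense of Definition \ref{def: sparse with respect to Y}, and the relative fibred coarse embedding of $X$ w.r.t.\ $Y$ restricts/descends to one of $\wt X$ w.r.t.\ $\wt Y$, since fibred coarse embeddability only sees bounded-radius balls outside a thickening of $Y$ and these are unchanged by the re-metrization.

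The second step is to show that this modification does not change the relevant $K$-theory, i.e.\ that the relative coarse assembly map for $(X,Y)$ is an isomorphism if and only if it is for $(\wt X,\wt Y)$. Here I would argue that $\wt X$ and $X$ have the same ``coarse structure at infinity relative to $Y$'': there is a bijection between the underlying sets identifying $Y$ with $\wt Y$, and an entourage of $X$ not meeting (a thickening of) $Y$ maps to an entourage of $\wt X$, and conversely any entourage of $\wt X$ comes from one of $X$ since we only spread the annuli apart. Consequently the relative Roe algebras agree: $C^*_{Y,\infty}(X)\cong C^*_{\wt Y,\infty}(\wt X)$ and likewise in the maximal category and for the localization algebras $C^*_{L,Y,\infty}(P_d(X))$, compatibly with the evaluation maps defining $\mu_{Y,\infty}$ and $\mu_{\max,Y,\infty}$. (One must be slightly careful that the geometric and ghostly ideals $C^*(X,Y)$, $I_G(Y)$ are likewise identified — this uses that the ideal of a space generated by $Y$ and the invariant open set $U_Y$ depend only on the collection of bounded thickenings of $Y$, which is preserved.) Therefore the two relative coarse assembly maps are literally conjugate, proving the reduction.

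The main obstacle I anticipate is the bookkeeping in the re-metrization: one must choose the inflated metric on $\wt X$ so that (a) each $\wt X_k$ is coarsely equivalent (indeed isometric on the net) to the original annulus, (b) $\wt Y$ is a genuine net of $\wt X_0$ with $\wt X_0$ coarsely equivalent to $Y$, (c) the $\wt X_k$ go to infinity from each other, and simultaneously (d) the relative fibred coarse embedding survives with controlling functions independent of $k$. Balancing (a)–(d) requires choosing the scales $S_k$ growing fast enough relative to the controlling functions $\rho_\pm$ and the gaps between annuli growing fast enough relative to the $R_k$; this is routine but needs to be stated carefully. A secondary, more conceptual point to check is that passing to $P_d(\cdot)$ commutes with this construction in the limit $d\to\infty$, so that the left-hand sides of the assembly maps also match — this follows because Rips complexes are functorial for coarse maps and the identification above is a coarse equivalence of pairs. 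Once the sparse model is in place, Theorem \ref{thm: RCBC for RFCE spaces} for sparse pairs yields the general case.
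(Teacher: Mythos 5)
Your proposed re-metrization approach has a genuine gap at the crucial identification step. When you build $\widetilde{X}=\bigsqcup_k \widetilde{X}_k$ by taking isometric copies of the annular regions $\text{Pen}(Y,S_{k+1})\setminus\text{Pen}(Y,S_k)$ and spreading them apart so that $d(\widetilde{X}_k,\widetilde{X}_m)\to\infty$, you strictly shrink the coarse structure: an entourage of $X$ that straddles the common boundary of two consecutive annuli, lying far from $Y$, is destroyed by the re-metrization and does not correspond to any entourage of $\widetilde{X}$. So the claim that ``an entourage of $X$ not meeting a thickening of $Y$ maps to an entourage of $\widetilde{X}$'' is false, and with it the assertion $C^*_{Y,\infty}(X)\cong C^*_{\widetilde{Y},\infty}(\widetilde{X})$. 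The ghostly ideal $I_G(Y)$ also sees these boundary-crossing entries, so it is not preserved either. Without this identification, the assembly maps for $(X,Y)$ and $(\widetilde{X},\widetilde{Y})$ are not conjugate, and the reduction does not go through.

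The paper avoids re-metrization entirely. It defines annuli $X_n=\{x\mid n^3-n<d(x,Y)<(n+1)^3+n+1\}$ inside the original metric space $X$, sets $X^{(0)}=\bigcup_n X_{2n}$ and $X^{(1)}=\bigcup_n X_{2n+1}$, and observes that each of $X^{(0)}$, $X^{(1)}$, and $X^{(0)}\cap X^{(1)}$ (with the subspace metric, unchanged) is already sparse with respect to $Y$ --- the cubic growth of the inner radii forces the even-indexed (respectively odd-indexed) annuli to drift apart, while the overlap of consecutive annuli makes $\{X^{(0)},X^{(1)}\}$ an $\omega$-excisive cover of $X$. A Mayer--Vietoris argument then reduces the conjecture for $(X,Y)$ to the three sparse pairs. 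No change of metric or identification of Roe algebras across metrics is ever required. If you want to keep a sparsification idea, the lesson is to sparsify by decomposing rather than by spreading: stay inside $X$ and let Mayer--Vietoris handle the gluing along the overlaps.
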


\begin{proof}
For any metric space $X$ with bounded geometry and $Y\subseteq X$, define
$$X_n=\{x\in M\mid n^3-n<d(x,Y)<(n+1)^3+n+1\}$$
for each $n\in\IN$. Then $X=\bigcup_{n\in\IN} X_n$. Denote by
$$ X^{(0)}=\bigsqcup_{n\in\mathbb{N}}X_{2n}\quad \text{and}\quad X^{(1)}=\bigsqcup_{n\in\mathbb{N}}X_{2n+1}.$$
Then $X^{(0)}$, $X^{(1)}$ and $X^{(0)}\cap X^{(1)}$ are all disjoint unions of subspaces of $X$. It is direct to check that $\{X^{(0)},X^{(1)}\}$ forms an $\omega$-excisive cover of $X$. Moreover, it is direct to check that the three spaces \(X^{(0)}\), \(X^{(1)}\), and \(X^{(0)} \cap X^{(1)}\) all contains the subset $Y$ and are all sparse with respect to $Y$. Since $X$ admits a relative fibred coarse embedding into Hilbert space w.r.t. $Y$, so does \(X^{(0)}\), \(X^{(1)}\), and \(X^{(0)} \cap X^{(1)}\). Then the proposition follows from a Mayer-Vietoris argument.
\end{proof}

\subsection{Coarsely proper algebras}

In this subsection, we will introduce a relative version of coarsely proper algebra that is associated with our relative fibred coarse embedding into Hilbert space, as defined in Section \ref{section 3.1}. Subsequently, several examples will be provided to help understand the concepts discussed.

By Proposition \ref{pro: reduction to sparse spaces}, we will always assume that the metric space $X=\bigsqcup_{n\in\IN}X_n$ is sparse with respect to the subspace $Y$ in the following. We define for each $k\in\IN$,
$$Y_k=\bigsqcup_{n=0}^{k}X_{n}.$$

\begin{Def}\label{definition of N-C-algebra}
Let $N$ be a second countable, locally compact, Hausdorff space. A $C^*$-algebra $\A$ is called a $N$-$C^*$-algebra if there exists a $*$-homomorphism $C_0(N)\to Z(M(\A))$ such that $C_0(N)\cdot \A$ is dense in $\A$, where $M(\A)$ is the multiplier algebra of $\A$ and $Z(M(\A))$ is the center of $M(\A)$.
\end{Def}

\begin{Def}[\cite{GLWZ2023}]\label{def: 1coarsely proper algebra}
A $C^*$-algebra $\A$ is said to be a \emph{coarsely proper algebra} associated with a coarse embedding $f: X\to M$ (comes with a pair of control functions $\rho_{\pm}$) if $\A$ is a $N$-$C^*$-algebra for a separable locally compact Hausdorff space $N$ such that
\begin{enumerate}
    \item[(1)]for any $R>0$, there exists a nest of open sets $O_{x,R}\subseteq N$ such that $\bigcup_{R>0} O_{x,R}$\ is dense in $N$ for each $x\in X$;
    \item[(2)] there exists a map $\varphi: M\to N$ such that  $\varphi(M)\cap O_{x,R}=\varphi(B_M(f(x),R))$;
    \item[(3)] $O_{x,\rho_-(R)}\cap O_{x', \rho_-(R)}=\emptyset$ whenever $d(f(x),f(x'))\geq 2R$;
    \item[(4)] $O_{x,R}\subseteq O_{x',R+\rho_+(d(x,x'))}$ for any $x,x'\in X$.
\end{enumerate}
\end{Def}

\begin{Rem} The requirement that the open subset satisfies $O_{x_{1},r}\subseteq O_{x_{2},\rho_{+}(d(x_{1},x_{2}))+r}$ rather than
$O_{x_{1},r}\subseteq O_{x_{2},d(x_{1},x_{2})+r}$ for any $x_{1},x_{2}\in X$ and $r>0$ in Definition \ref{def: 1coarsely proper algebra} arises from the fact that all open subsets $\{O_{x,r}\}_{x\in X}$ are chosen within the metric space $M$ , which possesses a distinct metric compared to that of $X$. So we need a control function $\rho_{+}$ to ensure consistency with the coarse embedding. We also note that this requirement guarantees the open sets  $\{O_{x,r}\}_{x\in X}$ we identify do not shrink excessively. The reason for asking $O_{x_{1},\ \rho_{-}(r)}\cap O_{x_{2},\ \rho_{-}(r)}=\emptyset$ if $d(x_{1},x_{2})>2r$ for any $x_{1},x_{2}\in X$ instead of $O_{x_{1},\rho_-(r)}\cap O_{x_{2},\rho_{-}(r)}=\emptyset$ is quite similar. 
\end{Rem}

As defined in Definition \ref{def: partial FCE}, when approaching infinity along pre-selected directions away from $Y$, for each $k$ and $x\in Y_k^c$, there exists a trivialization
$$t_x: (M_{z})_{z\in B(x,l_k)}\to M\times B(x,l_k).$$
The map $f_{x,k}: z\mapsto t_x(z)(s(z))$ actually gives a coarse embedding of $B(x,l_k)$ into $M$ with controlling functions $\rho_{\pm}$. If $B(x,l_k)\cap B(y,l_k)\ne\emptyset$, then there exists a isometry $t_{xy}: M\to M$ such that $t_{xy}=t_x(z)\circ t^{-1}_y(z)$ for any $z$ in the intersection part.

\begin{Def}\label{def: coarsely proper algebra}
A $N$-$C^*$-algebra $\A$ is called a \emph{coarsely proper algebra} associated with the relative fibred coarse embedding for $X$ into $M$ w.r.t. $Y$ if there exists\begin{itemize}
\item a field of $C^*$-algebras $(\A_x)_{x\in X}$ over $X$ such that each $\A_x$ is isomorphic to $\A$;
\item a nest of open sets $\{O_{x,R}\}_{R\geq 0}$ in $N_x$ such that $\bigcup_{R\geq 0}O_{x,R}$ is dense in $N_x$ for any $x\in X$, where $\A_x$ is a $N_x$-$C^*$-algebra.
\end{itemize}
such that for any $k\in\IN$ and $x\in X\backslash Y_k$, the trivialization for fibred coarse embedding induces a trivialization for the field of $C^*$-algebras (It is important to note that, although there is a slight abuse of notation, we will continue to refer to this trivialization as \( t \).)
$$t_x:\bigcup_{z\in B(x,l_k)}\A_z\to B(x,l_k)\times\A$$
such that
\begin{enumerate}
\item [(1)] The mapping $t_x(z):\A_z\to\A$ is a $C^*$-isomorphism that induces a homeomorphism $f_{t_x(z)}: N_x\to N$ such that there exists a pair of non-decreasing proper functions $\rho_{\pm}$ from $\IR_+$ to $\IR_+$ such that $f_{t_x(z)}(O_{z,\rho_-(R)})\cap f_{t_x(z')}(O_{z', \rho_-(R)})=\emptyset$ whenever $$d(t_x(z)(s(z)),t_x(z')(s(z')))\geq 2R,$$ and $f_{t_x(z)}(O_{z,R})\subseteq f_{t_x(z')}(O_{z', {R+\rho_{+}(d(z,z'))}})$ for any $z,z'\in B(x,l_k)$;
\item [(2)] for any $x,y\subset X\backslash Y_k$ with $B(x,R)\cap B(y,R)\ne \emptyset$, there exists isomorphism $t_{xy}:\A\to \A$ such that $t_{x,R}(z)\circ t^{-1}_{y,R}(z)=t_{xy}$ for all $z\in B(x,R)\cap B(y,R)$.
\end{enumerate}\end{Def}

\begin{Exa}\label{exa: coarsely proper algebra}
If $X$ admits a relative fibred coarse embedding into an $\ell^p$-space $B$ w.r.t. a fixed subspace $Y$, then the algebra $\A(B)$, as defined in \cite{GLWZ2024}, satisfies Definition \ref{def: coarsely proper algebra}. We will not restate the definition here and directly use the notation established in \cite{GLWZ2024}. It is clear that $\A(B)$ is a $B\times\IR_+$-algebra. Since $X$ admits a relative fibred coarse embedding into $B=\ell^p(\IN,\IR)$ w.r.t. $Y$, we assign $\A(B_x)$ to each $x$. For any $R>0$ and $x\in X$, define
$$O_{x,R}=\{(v,t)\in B_x\times\IR_+\mid \|v-s(x)\|^p_p+t^2< R^2\}\subseteq B_x\times\IR_+.$$
For the isometry $t_x(z): B_x\to B$, it induces an isomorphism $\A(B_x)\to\A(B)$ by mapping the Clifford generator to the Clifford generator, i.e.
$$C_{B_x,v}\mapsto C_{B,t_x(z)(v)}.$$
Then the family $\{t_{x}(z)(O_{z,R})\}_{z\in B(x,l_k), R\geq 0}$ makes $\A(B)$ a coarsely proper algebra for the coarse embedding $B(x,l_k)\to B$ defined by $z\mapsto t_x(z)(s(z))$. This proves that $\A(B)$ is a coarsely proper algebra associated with the relative fibred coarse embedding.
Especially if we take $p=2$, then $\A(B)$ is isomorphic to $\lim_{V_a\subseteq\H}\mathcal{S}\C(V_a)$, as defined in \cite{HKT1998, Yu2000}. Analogous to the explanation above, a proper algebra $\mathcal{A}(M)$ can be similarly constructed for relative fibred coarse embeddings into a Hadamard manifold $M$, as detailed in \cite{SW2007}.
\end{Exa}

Combining \cite[Definition 3.7]{GLWZ2023} and \cite[Definition 3.4]{DGWY2025}, one can similarly define the \emph{(maximal) twisted Roe algebra at infinity of $X$ relative to $Y$}, $C^*_{Y,\infty}(P_d(X),\A)$ and the \emph{(maximal) twisted localization algebra at infinity of $X$ relative to $Y$}, $C^*_{L,Y,\infty}(P_d(X),\A)$ with coefficient in a coarsely proper algebra $\A$ for a metric spaces $X$ \emph{which is sparse with respect to $Y$}. Since the constructions are quite similar, we shall omit the details here.

By the universal property of the maximal norm, there is a canonical quotient map
\[\pi: \Cau(P_d(X),\A)\to C^*_{Y,\infty}(P_d(X),\A).\]
Moreover, the evaluation maps $g\mapsto g(0)$ for both maximal and reduced cases induce the following \emph{twisted relative assembly maps}: 
%$$\ind_{\max,\A}: K_*(\CauL((P_d(X_n),\A)_{n\in\IN}))\to K_*(\Cau((P_d(X_n),\A)_{n\in\IN}));$$
%$$\ind_{\A}: K_*(C^*_{u,L,Y,\infty}((P_d(X_n),\A)_{n\in\IN}))\to K_*(C^*_{u,Y,\infty}((P_d(X_n),\A)_{n\in\IN})).$$
%Passing $d$ to infinity, we then have the \emph{twisted relative assembly map at infinity}.
$$\mu_{\max,\A}: \lim_{d\to\infty}K_*(\CauL(P_d(X),\A))\to \lim_{d\to\infty}K_*(\Cau(P_d(X),\A));$$
$$\mu_{\A}: \lim_{d\to\infty}K_*(C^*_{L,Y,\infty}(P_d(X),\A))\to \lim_{d\to\infty}K_*(C^*_{Y,\infty}(P_d(X),\A)).$$

\begin{Thm}\label{thm: twisted partial assembly map}
If $\A$ is a {coarsely proper algebra} associated with the relative fibred coarse embedding of $X$ w.r.t. $Y$, then the twisted relative assembly map $\mu_{\max,\A}$ and $\mu_{\A}$ are isomorphisms.\qed
\end{Thm}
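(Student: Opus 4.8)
The plan is to prove Theorem~\ref{thm: twisted partial assembly map} by a cutting‑and‑pasting (Mayer--Vietoris) argument that reduces the statement, through the five lemma, to an elementary piece on which the evaluation‑at‑$0$ map is manifestly a $K$‑theory isomorphism. Throughout, the standing hypothesis (coming from Proposition~\ref{pro: reduction to sparse spaces}) is that $X=\bigsqcup_{n\in\IN}X_n$ is sparse with respect to $Y$, so that $Y_k=\bigsqcup_{n=0}^kX_n$ exhausts $X$ and, for each $k$, the relative fibred coarse embedding genuinely trivializes at scale $l_k$ over the balls $B(x,l_k)$ with $x\notin Y_k$. The first observation is that passing to the quotient ``at infinity'' relative to $Y$ discards exactly the contribution of the bounded blocks: an element of $C^*_{u,Y,\infty}((P_d(X_n),\A)_{n\in\IN})$ is represented by a family whose supports eventually avoid any fixed $Y_k$, so for the purpose of computing $K$‑theory only the balls $B(x,l_k)$ with $x\notin Y_k$ matter. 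Over such a ball the trivialization $t_x$ of Definition~\ref{def: coarsely proper algebra} turns $\A$ into an honest coarsely proper algebra, in the sense of \cite{GLWZ2023}, for the genuine coarse embedding $z\mapsto t_x(z)(s(z))$ of $B(x,l_k)$ into $M$, together with the compatible nest $\{t_x(z)(O_{z,R})\}$ of open sets in $N$ (cf.\ Example~\ref{exa: coarsely proper algebra}). Hence locally one is exactly in the framework already analysed in \cite{GLWZ2023, GLWZ2024, DGWY2025}, and the task is to globalize in a manner compatible with the sparse bookkeeping.

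Next I would reduce to finite‑dimensional coefficients and then run the Mayer--Vietoris machine. Since Roe‑type and localization‑type algebras are continuous with respect to inductive limits of the coefficient algebra and $\A\cong\lim_a\S\C(V_a)$ over finite‑dimensional affine subspaces $V_a\subseteq\H$ (Example~\ref{exa: coarsely proper algebra}), continuity of $K$‑theory gives $\mu_\A=\lim_a\mu_{\S\C(V_a)}$, and likewise for the maximal version; so it suffices to treat a fixed $\S\C(V_a)$ with $\dim V_a<\infty$. With finite‑dimensional coefficients the relevant base space (essentially $V_a\times\IR_+$) has finite asymptotic dimension, and the open cover $\{O_{x,R}\}_{x\in X}$ of the $N$‑direction has finite multiplicity at each scale by conditions (3)--(4) of Definition~\ref{def: coarsely proper algebra} together with the bounded geometry of $X$. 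One then writes both the twisted Roe algebra at infinity and the twisted localization algebra at infinity as iterated extensions of ``$O$‑supported'' ideals; the evaluation maps defining $\mu_{\S\C(V_a)}$ and $\mu_{\max,\S\C(V_a)}$ respect these ideals and the associated boundary maps, so a repeated application of the five lemma over the finite‑dimensional nerve of the cover reduces the claim to the $0$‑skeleton, i.e.\ to a single cover element $O_{x,R}$ sitting over a uniformly bounded block $Z\subseteq X$. The inductive limits in $d$ (Rips scale), in $k$ (so that $l_k\to\infty$), in $R$, and over the $V_a$ are then taken using continuity of $K$‑theory, keeping the control functions $\rho_\pm$ uniform throughout.

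On a single elementary piece the twisted Roe algebra is, up to stabilization, of the form $C^*(Z)\otimes\B_{x,R}$ for a uniformly bounded block $Z\subseteq X$ and a $C^*$‑algebra $\B_{x,R}$ attached to the cover element $O_{x,R}$, while the corresponding twisted localization algebra differs from it only by an additional $[0,\infty)$‑parameter; contracting that parameter exhibits evaluation at $0$ as a homotopy equivalence of these models, hence a $K$‑theory isomorphism (here one uses that $Z$ is bounded, so that both sides compute $K_*(\B_{x,R})$ and the evaluation realizes this identification, uniformly over the blocks). For the maximal versions $\CauL$ and $\Cau$ exactly the same scheme applies: the finite multiplicity of $\{O_{x,R}\}$ and the boundedness of the blocks make the local groupoids (locally) amenable, so the maximal and reduced completions of each elementary piece coincide, and the maximal Mayer--Vietoris sequences exist by the universal property of the maximal norm, just as in \cite{CWY2013, GLWZ2024, Yu2000, HKT1998}. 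The comparison map $\pi$ between the maximal and reduced twisted algebras is compatible with all of these decompositions, which is precisely what feeds later into the proof of Theorem~\ref{thm: RCBC for RFCE spaces}.

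I expect the main obstacle to be organizational rather than conceptual: one must run the Mayer--Vietoris/five‑lemma machinery over the a priori infinite‑dimensional base $N$ in a way that is simultaneously compatible with the sparse decomposition $\bigsqcup_nX_n$, which forces uniform/product‑type bookkeeping across the blocks; with the direct limit over the Rips scale $d$; and with the ``at infinity'' truncation, which changes, as $k$ grows, the set of basepoints over which the trivialization is available. Keeping the pair of control functions $\rho_\pm$ uniform across all of these limits, and verifying that the maximal completions behave well at every stage of the induction, is where the care is needed; none of this, however, goes beyond adapting the arguments of \cite{GLWZ2023, GLWZ2024, DGWY2025} to the relative setting, the passage through the nerve of the cover being handled by the usual cutting‑and‑pasting techniques as in \cite{WY2020}.
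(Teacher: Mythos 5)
The paper states Theorem~\ref{thm: twisted partial assembly map} with a terminal \qed and supplies no explicit proof, implicitly deferring to the cutting-and-pasting arguments for twisted assembly maps developed in \cite{Yu2000, CWY2013, GLWZ2023, GLWZ2024, DGWY2025}. Your sketch follows exactly that standard strategy — use of the nest $\{O_{x,R}\}$ and conditions (3)--(4) of the coarse-properness definition to set up a Mayer--Vietoris/five-lemma induction, reduction via continuity of $K$-theory (over the finite-dimensional pieces $V_a$, the Rips scale $d$, and the truncation parameter $k$) down to elementary pieces on which evaluation at $0$ is a homotopy equivalence, and observation that passing to the ``at infinity'' quotient relative to $Y$ discards the contribution of the bounded blocks $Y_k$ — so it is essentially the same approach the paper refers to.
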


%As a direct corollary, we also have the following result.

%\begin{Cor}\label{cor: geometric K-amenable}
%The canonical quotient map \eqref{eq: canonical quotient map} induces an isomorphism on the level of $K$-theory.\qed
%\end{Cor}

\subsection{A conceptual description of uniformly almost flat Bott generators}

In this subsection, we provide a conceptual description of \emph{uniformly flat Bott generators} for coarsely proper algebras and present the proof of Theorem~\ref{thm: RCBC for RFCE spaces}.

\begin{Def}
A coarsely proper algebra $\A$ is said to admit a family of \emph{uniformly almost flat Bott generators} if,\begin{itemize}
\item for any $x\in X$ and $\t\geq 1$, there exists a generator $[b_{x,\t}]\in K_*(\A_x)$ such that for any $\varepsilon>0$ and $r\geq 0$, there exists $T>1$ such that
$$\|t_x(x)(b_{x,\t})-t_x(y)(b_{y,\t})\|\leq \varepsilon$$
for all $x,y$ with $d(x,y)\leq r$ and $x,y$ in the same trivialization, and $\t\geq T$, where $\A_x$ and $t_x$ is defined as in Definition \ref{def: coarsely proper algebra};
\item for any $\t\geq 1$, there exists $R>0$ such that $\supp(b_{x,\t})\subseteq O_{x,R}$ for any $x\in X$, where $O_{x,R}$ is defined as in Definition \ref{def: coarsely proper algebra}.
\end{itemize}\end{Def}

A typical example of a uniformly flat Bott generator in the $K_0$-group comes from Hadamard manifolds; see the discussion of Bott generators in \cite[Section 4]{SW2007}. For the $K_1$-case, we illustrate the notion of a uniformly flat Bott generator with the following example.

\begin{Exa}[$\ell^p$-spaces]\label{exa: Bott generator for lp}
Let $B_x=\ell^p(\IN,\IR)$ for each $x\in X$ and $H=\ell^2(\IN,\IR)$. For any $\t\in\IR_+$ and $x\in X$, denote $\beta_{x,\t}:\S\to\A(B_x)\ox\K$ by the map
$$\beta_{x,\t}(f)=f_\t(C_{s(x)})\ox p,$$
where $C: B_x\to\Cl(H)$ is the Clifford generator defined as in \cite[Section 3]{GLWZ2024}  by $p/2$-H\"{o}lder extension of the Mazur map, and $f_\t(r)=f(r/\t)$. By \cite[Remark 2.9.13]{WY2020}, each element $\beta_{x,\t}$ determines an element in $[C_0(\IR),\A(B_x)\ox\K]$, thus determines an element in $K_1(\A(B_x))$. Moreover, it is proved in \cite[Theorem 3.15]{GLWZ2024} that $\beta_{x,\t}$ induces an isomorphism $(\beta_{x,\t})_*: K_*(\S)\to K_*(\A(B_x))$. Thus, $[\beta_{x,\t}]$ is a generator of $K_1(\A(B_x))$ for each $x\in X$ and $\t\in\IR_+$.

We now prove that $\{\beta_{x,\t}\}_{x\in X,\t\in\IR_+}$ is a family of uniformly almost flat Bott generators. Since the $K_1$-class determined by $\beta_{x,\t}$ is only dependent on its value at the generator $\frac{1}{x\pm i}$ of $\S$, it suffices to prove for fixed $f\in \S$ and any $\varepsilon>0$, $R>0$, there exists $T>1$ such that
\[\|t_x(x)(\beta_{x,\t}(f))-t_x(y)(\beta_{y,\t}(f))\|\leq\varepsilon\]
for all $x,y\in X$ with $d(x,y)\leq R$ and $x,y$ in a the same trivialization, and $\t\geq T$. This has been proved in \cite[Lemma 3.8]{GLWZ2024}.
\end{Exa}

A sequence of almost flat Bott generators induces Bott maps
$$\beta: K_*(C^*_{Y,\infty}(P_d(X))) \to K_{*}(C^*_{Y,\infty}(P_d(X),\A)).$$
If the Bott generator belongs to $K_1(\mathcal{A})$, as shown in Example \ref{exa: Bott generator for lp}, then a method analogous to \cite[Definition 3.14]{DGWY2025} can be applied to construct $\beta$. If, instead, the Bott generator lies in $K_0(\mathcal{A})$, the construction of $\beta$ follows from \cite[Section 4]{SW2007}. Similar considerations apply to the localization algebra setting. Since the constructions are largely similar, we omit the details here.

For both cases, there always exists the following commuting diagram
\begin{equation}\label{eq: dual-Dirac}\begin{tikzcd}
K_{*}(C^*_{L, Y,\infty}(P_d(X))) \arrow[d,"\beta_L"'] \arrow[r,"ev_*"] & K_*(C^*_{Y,\infty}(P_d(X))) \arrow[d,"\beta"] \\
K_*(C^*_{L, Y,\infty}(P_d(X),\A)) \arrow[r,"ev_*"]           & K_*(C^*_{Y,\infty}(P_d(X),\A))          
\end{tikzcd}\end{equation}
The map $\beta_L$ and $\beta$ will change the degree on $K$-theory if the family of uniformly almost flat Bott generators we choose is in $K_1(\A)$.

\begin{Lem}\label{lem: isomorphism for Bott localization}
The Bott map for localization algebras associated with a family of uniformly almost flat Bott generators is an isomorphism.
\end{Lem}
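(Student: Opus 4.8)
\textbf{Proof plan for Lemma \ref{lem: isomorphism for Bott localization}.}

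The plan is to exploit the locality of the $K$-theory of localization algebras via a Mayer--Vietoris / cutting-and-pasting argument, reducing the claim to the $0$-skeleton, where the Bott map becomes (a product of) a single honest Bott isomorphism on a coarsely proper algebra. First I would observe that both $K_*(C^*_{u,L,Y,\infty}((P_d(X_n))_{n\in\IN}))$ and $K_*(C^*_{u,L,Y,\infty}((P_d(X_n),\A)_{n\in\IN}))$ admit Mayer--Vietoris sequences with respect to $\omega$-excisive decompositions of the $P_d(X_n)$, and that the Bott map $\beta_L$ is compatible with these sequences (it is induced by a tensor-type construction that does not move support and is natural for the inclusions defining the decomposition). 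By the standard decomposition of a simplicial complex into its skeleta and a five-lemma induction on dimension, it suffices to prove that $\beta_L$ is an isomorphism on the $0$-skeleton, i.e.\ for the discrete space $X$ itself with the sparse-relative-to-$Y$ structure.

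On the $0$-skeleton the localization algebra at infinity decomposes as an inductive limit over $k$ of products indexed by points $x\in X\setminus Y_k$ of local pieces, each piece being a localization algebra (with propagation tending to $0$) built from the fiber $C^*$-algebra at $x$; crucially, the trivialization $t_x$ identifies each fiber algebra $\A_x$ with the fixed coarsely proper algebra $\A$, and the uniform almost-flatness of the Bott generators $[b_{x,\t}]$ guarantees that the induced Bott maps on the local pieces, while defined point-by-point, assemble to a bounded operator on the product that is \emph{uniformly} close (as $\t\to\infty$, which is absorbed into the localization parameter) to the Bott map coming from a single generator $[b_\t]\in K_*(\A)$. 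Thus on each local piece the relevant map is the localized Bott map $\beta_L:K_*(C^*_{u,L}(B(x,l_k)))\to K_{*+1}(C^*_{u,L}(B(x,l_k),\A))$ for the honest coarse embedding $z\mapsto t_x(z)(s(z))$ into the model space, and one invokes the known fact (the localized version of the Dirac--dual-Dirac statement, as in the proof of Theorem \ref{thm: twisted partial assembly map}, cf.\ \cite[Section 4]{SW2007} for the $K_0$-case and \cite[Lemma 3.8, Theorem 3.15]{GLWZ2024} together with \cite{DGWY2025} for the $K_1$-case) that this localized Bott map is an isomorphism because the generator $[b_\t]$ is a genuine generator of $K_*(\A)$ with a $K$-theory inverse and the support condition $\supp(b_{x,\t})\subseteq O_{x,R}$ makes the construction compatible with the nest of open sets defining the $\A$-valued localization algebra. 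Passing to the product over $x\in X\setminus Y_k$ and then to the inductive limit over $k$ and over $d$, the isomorphisms are preserved since $K$-theory commutes with the relevant inductive limits and with the countable products appearing here (the algebras are quasi-stable, so \cite[Lemma 12.4.3]{WY2020} applies as in Lemma \ref{lem: representable K-homology}).

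The main obstacle I expect is the bookkeeping needed to make the ``assemble to a uniform Bott map'' step precise: one must show that the collection $\{\beta_{L,x}\}$ defined locally on the pieces $B(x,l_k)$ using the generators $[b_{x,\t}]$ genuinely defines a $*$-homomorphism (or at least a $K$-theory map represented by an asymptotic morphism) on the whole twisted localization algebra at infinity, and that the cocycle condition (2) in Definition \ref{def: coarsely proper algebra} for the transition isomorphisms $t_{xy}$ makes these local maps agree on overlaps so that they glue. This is exactly where the hypothesis of \emph{uniform} almost-flatness (rather than mere flatness at each scale) is used: it forces the propagation of the difference terms to zero so that the glued map lands in the localization algebra and not merely in its quotient by the propagation-zero ideal. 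Once this compatibility is established, the five-lemma / Mayer--Vietoris reduction and the $0$-skeleton computation are routine, and the proof concludes.
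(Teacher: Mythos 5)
Your approach matches the paper's: a Mayer--Vietoris / cutting-and-pasting reduction to the zero-skeleton $\Delta^{(0)}$, followed by the observation that on $\Delta^{(0)}$ the $K$-theory of the relative localization algebra at infinity is a quotient of products over singletons $\{x\}$, and on each singleton the Bott map is an exterior tensor product with a genuine $K$-theory generator of $\A$, hence an isomorphism. Two small misalignments with the paper's proof, worth fixing: after reducing to $\Delta^{(0)}$ you switch to balls $B(x,l_k)$ and ``honest coarse embeddings,'' but on the zero-skeleton the local pieces are single points with $C^*_L(\{x\})\cong C_{ub}(\IR_+,\K)$, so the map $K_*(C^*_L(\{x\}))\to K_*(C^*_L(\{x\},\A))$ is literally exterior multiplication by $[b_{x,\t}]$ and no further coarse-geometric input is needed; and the gluing/cocycle issue you flag as the main obstacle is really part of \emph{constructing} $\beta_L$, which the paper has already done (via \cite{DGWY2025} and \cite{SW2007}) before stating the lemma, so it does not need to be re-established inside this proof.
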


\begin{proof}
By using a cutting and pasting argument as in \cite[Theorem 7.4]{GLWZ2024} for the localization algebra, it suffices to prove the Bott map for the zero-skeleton $\Delta^{(0)}$ of the Rips complex. In this case, we have that
$$K_*(C^*_{L,Y,\infty}(\Delta^{(0)}))\cong\frac{\prod_{x\in X}K_*(C^*_L(\{x\}))}{\prod_{y\in Y}K_*(C^*_L(\{y\}))}\quad\text{and}\quad K_*(C^*_{L,Y,\infty}(\Delta^{(0)},\A))\cong\frac{\prod_{x\in X}K_*(C^*_L(\{x\},\A))}{\prod_{y\in Y}K_*(C^*_L(\{y\},\A))}.$$
The map from $K_*(C^*_L(\{x\}))$ to $K_*(C^*_L(\{x\},\A))$ is given by the Bott map which is essentially an exterior tensor product with the Bott generator, thus is an isomorphism.
\end{proof}

Now, we are ready to prove Theorem \ref{thm: RCBC for RFCE spaces}.

\begin{proof}[Proof of Theorem \ref{thm: RCBC for RFCE spaces}]
One can define the inverse map of the Bott map as in \cite[Section 7.2]{CWY2013}, which leads to the following commuting diagram:
\[\begin{tikzcd}
K_{*}(C^*_{L, Y,\infty}(P_d(X))) \arrow[d,"\beta_L"'] \arrow[r,"ev_*"] & K_*(C^*_{Y,\infty}((P_d(X))) \arrow[d,"\beta"] \\
K_{*}(C^*_{L, Y,\infty}(P_d(X),\A)) \arrow[d,"\alpha_L"'] \arrow[r,"ev_*","\cong"'] & K_*(C^*_{Y,\infty}(P_d(X),\A)) \arrow[d,"\alpha"] \\
K_*(C^*_{L, Y,\infty}(P_d(X))) \arrow[r,"ev_*"]           & K_*(C^*_{Y,\infty}(P_d(X))).          
\end{tikzcd}\]
One can similarly check that $\alpha\circ\beta$ and $\alpha_L\circ\beta_L$ are both identity maps. Then Theorem \ref{thm: RCBC for RFCE spaces} follows from a diagram chasing argument.
\end{proof}

As a direct corollary, we have the following result:

\begin{Cor}\label{cor: RCNC for FCE into lp and Hadamard}
Let $X$ be a metric space that is sparse relative to $Y$. If $X$ admits a fibred coarse embedding into an $\ell^p$-space (with $p\in[1,\infty)$) or a Hadamard manifold, then the relative coarse Novikov conjecture for $(X,Y)$ holds.
\end{Cor}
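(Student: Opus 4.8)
The plan is to reuse the Dirac--dual-Dirac scheme that underlies Theorem~\ref{thm: RCBC for RFCE spaces}, but to keep only the ``Dirac'' (Bott) half of it: that already delivers injectivity of the relative coarse assembly map, which is precisely what the relative coarse Novikov conjecture asserts. Since $X$ is assumed sparse relative to $Y$, no preliminary reduction via Proposition~\ref{pro: reduction to sparse spaces} is needed.

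First I would apply Example~\ref{exa: coarsely proper algebra} to the given relative fibred coarse embedding of $X$ w.r.t.\ $Y$ into an $\ell^p$-space $B$ (resp.\ a Hadamard manifold $M$), obtaining a coarsely proper algebra $\A=\A(B)$ (resp.\ $\A(M)$) associated with it. Next, by Example~\ref{exa: Bott generator for lp} in the $\ell^p$ case and by the discussion of Bott generators in \cite{SW2007} in the Hadamard case, this algebra carries a family of uniformly almost flat Bott generators --- lying in $K_1(\A)$ when the model space is an $\ell^p$-space and in $K_0(\A)$ when it is a Hadamard manifold. These generators induce the Bott maps $\beta$ and $\beta_L$ (with a degree shift of $1$ in the $K_1$ situation), and by the construction of the twisted algebras over sparse spaces the top row of the commuting square~\eqref{eq: dual-Dirac} computes the relative coarse assembly map $\mu_{Y,\infty}$ (with trivial coefficients).

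Now Theorem~\ref{thm: twisted partial assembly map} gives that the bottom arrow $\mu_{\A}=ev_*$ of \eqref{eq: dual-Dirac} is an isomorphism, and Lemma~\ref{lem: isomorphism for Bott localization} gives that the left arrow $\beta_L$ is an isomorphism. Commutativity then forces $\beta\circ\mu_{Y,\infty}=\mu_{\A}\circ\beta_L$; the right-hand side is a composition of two isomorphisms, hence injective, so $\mu_{Y,\infty}$ is injective, i.e.\ the relative coarse Novikov conjecture holds for $(X,Y)$ --- and running the same argument with the maximal twisted algebras and $\mu_{\max,\A}$ yields its maximal version as well. Note that one does \emph{not} get an isomorphism here: the surjectivity in Theorem~\ref{thm: RCBC for RFCE spaces} came from the left inverse $\alpha$ of $\beta$ furnished by the Hilbert-space dual-Dirac construction of \cite[Section~7.2]{CWY2013}, and no such dual-Dirac class is available for $\ell^p$-spaces with $p\neq 2$ or for general Hadamard manifolds. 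The step I expect to require the most care is the second one --- constructing $\beta$ on the quotient (relative) algebras compatibly with evaluation, bookkeeping the $K_1$ degree shift, and justifying that the top row of \eqref{eq: dual-Dirac} is genuinely $\mu_{Y,\infty}$ --- but this parallels the constructions in \cite[Definition~3.14]{DGWY2025} and \cite[Section~4]{SW2007} closely enough to be invoked rather than redone.
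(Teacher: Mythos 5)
Your proposal is correct and follows essentially the same route as the paper: obtain the coarsely proper algebra from Example~\ref{exa: coarsely proper algebra}, pull in the uniformly almost flat Bott generators (Example~\ref{exa: Bott generator for lp} for $\ell^p$, \cite{SW2007} for Hadamard), invoke Theorem~\ref{thm: twisted partial assembly map} for the bottom arrow of~\eqref{eq: dual-Dirac} and Lemma~\ref{lem: isomorphism for Bott localization} for the left arrow, and conclude injectivity of $\mu_{Y,\infty}$ by a diagram chase. The paper's proof is literally a one-line citation of these same ingredients, so your write-up is just a more explicit version of the intended argument.
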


\begin{proof}
It is a direct corollary of Theorem \ref{thm: twisted partial assembly map}, Example \ref{exa: coarsely proper algebra}, Lemma \ref{lem: isomorphism for Bott localization}, Example \ref{exa: Bott generator for lp} and a diagram chasing argument on \eqref{eq: dual-Dirac}.
\end{proof}

%One can also study relative fibred coarse embedding via coarse groupoid. We have the following result. The proof is similar with \cite[Theorem 1]{FS2014}, thus omited.

%\begin{Thm}\label{thm: groupoid approach for PFCE}
%Let $X$ be a metric space with bounded geometry that is sparse with respect to a subset $Y\subseteq X$. If $X$ admits a relative fibred coarse embedding into Hilbert space w.r.t. $Y$, then the groupoid $G(X)_{\partial_{\beta}X\backslash U_Y}$ is a-T-menable.\qed
%\end{Thm}

%The above theorem provides another approach to Theorem \ref{thm: RCBC for RFCE spaces} via Tu's result \cite{Tu1999}. Combining this result with Corollary \ref{cor: scalar curvature problem}, to obtain an obstruction to uniformly positive scalar curvature of a uniformly contractible manifold, it suffices to have good behavior on only a portion of the space rather than requiring amenability conditions on the entire manifold.

\section{Applications: from ``relative'' to ``global''}

In this section, we present several applications of the relative coarse Baum-Connes conjecture. We first prove that under certain conditions, the relative conjecture can be used to establish the global coarse Baum-Connes conjecture. In fact, the relationship between the coarse Baum-Connes conjecture for a product space $X\times Y$ and the conjectures for the individual factors $X$ and $Y$ is not well understood in general; a precise relationship is only known in the twisted (i.e., coefficient) case \cite{DG2024}. As a concrete application, we prove that the maximal coarse Baum-Connes conjecture holds for products of finitely many FCE (fibred coarsely embeddable) spaces. This result provides explicit examples of spaces where the maximal coarse Baum-Connes conjecture holds while fibred coarse embeddability (FCE) may fail.

\subsection{From relative to global}

The following theorem is the main result of this section.

\begin{Thm}\label{thm: reduction to infinity for maximal}
Let $X$ be a sparse metric space relative to a subspace $Y\subseteq X$.\begin{itemize}
\item[(1)] If $Y$ satisfies the maximal coarse Baum-Connes conjecture. Then the maximal relative coarse Baum-Connes conjecture for $(X,Y)$ is equivalent to the maximal coarse Baum-Connes conjecture for $X$.
\item[(2)] If the relative coarse Novikov conjecture holds for $(X,Y)$ and the coarse Novikov conjecture holds for $Y$, then the coarse Novikov conjecture holds for $X$.
\end{itemize}\end{Thm}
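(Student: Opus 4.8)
The plan is to exploit the fact that $X$ sparse relative to $Y$ gives a decomposition $X = \bigsqcup_{n\in\IN} X_n$ with $Y$ coarsely equivalent to $X_0$, and to run everything through the two six-term exact sequences that connect the ``global'' objects to the ``relative at infinity'' objects and the ``localized near $Y$'' objects. Concretely, on the Roe-algebra side one has the short exact sequence $0 \to I_G(Y) \to C^*(X) \to C^*_{Y,\infty}(X) \to 0$ (and its maximal analogue $0 \to C^*_{\max}(X,Y) \to C^*_{\max}(X) \to C^*_{\max,Y,\infty}(X) \to 0$), while on the localization side Definition \ref{def: localization algebra at infinity} gives $0 \to C^*_{L,Y}(M,I_G(Y)) \to C^*_L(M) \to C^*_{L,Y,\infty}(M) \to 0$. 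Taking $K$-theory and passing to the limit over $d$, the assembly maps $\mu$ (for $X$), $\mu_{Y,\infty}$ (relative) and the assembly map for $Y$ (which is, up to Lemma \ref{lem: representable K-homology}, the assembly map for the ideal $I_G(Y)$, resp. $C^*_{\max}(X,Y)$) fit into a morphism of six-term exact sequences. The key point is that $\lim_d K_*(C^*_{L,Y}(P_d(X),I_G(Y))) \cong \lim_d R_YK_*(P_d(X)) \cong \lim_{R} K_*(P_d(\text{Pen}(Y,R)))$ by Lemma \ref{lem: representable K-homology}, and since $Y$ is coarsely equivalent to each $\text{Pen}(Y,R)$ is coarsely equivalent to $Y$ up to the net, the assembly map on this column is precisely the (maximal) coarse assembly map for $Y$.

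The main steps, in order: (i) Set up the morphism of six-term Mayer–Vietoris/ideal-quotient exact sequences relating the assembly map for $X$, the relative assembly map $\mu_{Y,\infty}$ (resp.\ $\mu_{\max,Y,\infty}$), and the assembly map for $I_G(Y)$ (resp.\ $C^*_{\max}(X,Y)$); one must check the diagrams commute, which follows from functoriality of the evaluation-at-$0$ maps $C^*_{L}(\cdot) \to C^*(\cdot)$ with respect to the ideal/quotient structure. (ii) Identify the assembly map for the ideal with the coarse assembly map for $Y$ via Lemma \ref{lem: representable K-homology} together with the identification $C^*(P_d(X),Y) \cong \lim_R C^*(P_d(\text{Pen}(Y,R)))$; in the maximal case use that the maximal geometric ideal of $Y$ is $C^*_{\max}$ of a space coarsely equivalent to $Y$ and its assembly map is $\mu_{\max}$ for $Y$. (iii) For part (1): assume $Y$ satisfies the maximal coarse Baum–Connes conjecture, so the ideal column is an isomorphism; then by the five lemma applied to the morphism of six-term exact sequences, $\mu_{\max,Y,\infty}$ is an isomorphism if and only if $\mu_{\max}$ for $X$ is an isomorphism. (iv) For part (2): assume $\mu_{Y,\infty}$ is injective and $\mu$ for $Y$ is injective; a diagram chase (the injective analogue of the five lemma — exactness at the relevant spots plus injectivity on the two outer columns forces injectivity in the middle) gives injectivity of $\mu$ for $X$.

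The subtle point — and the reason part (2) is weaker than part (1) — is that the reduced six-term sequence $0 \to I_G(Y) \to C^*(X) \to C^*_{Y,\infty}(X) \to 0$ on the $C^*$-level is exact, but the assembly map for the ideal $I_G(Y)$ is \emph{not} literally the reduced coarse assembly map for $Y$: the reduced sequence \eqref{the reduced sequence} has kernel $I_G(Y)$, which is strictly larger than the geometric ideal $C^*(X,Y) \cong C^*(Y)$ in general (this is exactly the failure of exactness of \eqref{the reduced sequence} at the middle term). So the ideal column computes $K_*$ of the ghostly ideal, not $K_*(C^*(Y))$, and one only has a comparison map, not an isomorphism, even assuming the coarse Baum–Connes conjecture for $Y$. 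One therefore cannot run the five lemma in both directions: injectivity can still be propagated (the ghost part only adds potential kernel further along the sequence, and a careful chase shows it does not obstruct injectivity in the middle when the localization side is handled via the identification of finite-propagation parts), but surjectivity cannot. In the maximal case this problem disappears precisely because \eqref{crossed product sequence} \emph{is} exact, so the ideal is genuinely $C^*_{\max}(X,Y)$ and its assembly map is genuinely $\mu_{\max}$ for $Y$; hence the full equivalence in part (1).

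I expect step (ii) — pinning down exactly what the assembly map on the ideal column is, in each of the reduced and maximal cases, and honestly tracking how the ghostly-vs-geometric discrepancy interacts with the localization-algebra side — to be the main technical obstacle; once that bookkeeping is done, steps (iii) and (iv) are standard five-lemma and diagram-chasing arguments.
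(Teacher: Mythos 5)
Your part (1) is correct and is essentially the paper's argument in a slightly different packaging: the paper first proves (Lemma \ref{lem: maximal Roe exact sequence} and Lemma \ref{lem: maximal localization exact sequence}) that the six-term sequences actually degenerate into short exact sequences on $K$-theory, and then applies the five lemma to the resulting ladder of short exact sequences, while you apply the five lemma directly to the ladder of six-term exact sequences. Both work; yours is marginally more economical for (1) since it does not need the vanishing of connecting maps. You also correctly pin down the identification of the ideal column with the (maximal) assembly map for $Y$ via Lemma \ref{lem: representable K-homology} and the coarse equivalence $Y_k \simeq Y$.

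However, part (2) has a genuine gap, precisely at the point you flag as ``a careful chase shows it does not obstruct injectivity in the middle.'' The chase forces you, at the bottom of the diagram, to conclude from $i_*\bigl(\mu_Y(b)\bigr) = 0$ in $K_*(C^*(X))$ that $\mu_Y(b) = 0$, and for that you need the inclusion $K_*(C^*(X,Y)) \to K_*(C^*(X))$ to be injective. This is \emph{not} an automatic consequence of the six-term exact sequence for the pair $(I_G(Y), C^*(X))$ — the connecting map out of $K_{*+1}(C^*_{Y,\infty}(X))$ is in general nonzero, so the inclusion of the ghostly ideal need not be injective on $K$-theory, and a priori the same worry applies to the geometric ideal. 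The paper establishes this injectivity separately (Lemma \ref{lem: maximal Roe exact sequence}, Approach 1, and Remark \ref{rem: injection of geometric ideal}) by a genuinely nontrivial argument that crucially exploits the sparseness hypothesis: given a homotopy of almost-unitaries trivializing a class from $C^*(X,Y)$ inside $C^*(X)$, one approximates by finite-propagation operators, uses the coarse separation of the $X_n$'s to cut the homotopy down to $Y_k$, and then polar-decomposes to recover a trivializing homotopy inside $C^*(X,Y)$ itself. Your phrases about ``the ghost part adding kernel further along'' and ``identification of finite-propagation parts'' gesture at this but do not constitute the argument, and the diagram you would be chasing through (with $I_G(Y)$ in the bottom-left corner) does not supply it. The paper sidesteps the ghostly ideal entirely in (2): the right-hand column of the diagram is \emph{not} the six-term exact sequence but merely the complex $K_*(C^*(X,Y)) \xrightarrow{\iota_*} K_*(C^*(X)) \to K_*(C^*_{Y,\infty}(X))$ with $\iota_*$ injective and the composite zero, which together with the short exactness of the localization column (Lemma \ref{lem: maximal localization exact sequence}) is exactly enough for the injectivity chase. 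You should isolate and prove the injectivity of $\iota_*$ as a standalone lemma; without it your argument for (2) does not close.
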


Before we can prove Theorem \ref{thm: reduction to infinity for maximal}, we shall need the following Lemma.

\begin{Lem}\label{lem: maximal Roe exact sequence}
For $X$ sparse relative to $Y$, we have the following exact sequence on $K$-theory
$$0\to K_*(C^*_{\max}(X,Y))\to K_*(C^*_{\max}(X))\to K_*(C^*_{\max, Y,\infty}(X))\to 0.$$
\end{Lem}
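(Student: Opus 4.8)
The plan is to exhibit a short exact sequence of $C^*$-algebras
\[
0 \to C^*_{\max}(X,Y) \to C^*_{\max}(X) \to C^*_{u,\max,Y,\infty}((X_n)_{n\in\IN}) \to 0,
\]
and then argue that the associated six-term exact sequence in $K$-theory splits into the claimed short exact sequences, i.e.\ that the connecting maps vanish. The first task is to identify the quotient $C^*_{\max}(X)/C^*_{\max}(X,Y)$ with the twisted (coefficient-free, $\A = \IC$) maximal Roe algebra at infinity $C^*_{u,\max,Y,\infty}((X_n)_{n\in\IN})$. Here I would use the groupoid picture recalled in Section~2: by Proposition~\ref{Roe algebra isomorphic to groupoid} and Proposition~\ref{geometric ideal and reduced groupoid algebra}, $C^*_{\max}(X) \cong \ell^\infty(X,\K)\rtimes G(X)$ and $C^*_{\max}(X,Y) \cong A_Y\rtimes G(X)|_{U_Y}$, and the exactness of the maximal crossed-product sequence \eqref{crossed product sequence} (Case~1) identifies the quotient with $(\ell^\infty(X,\K)/A_Y)\rtimes G(X)|_{U_Y^c} = C^*_{\max,Y,\infty}(X)$. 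Since $X$ is sparse relative to $Y$, i.e.\ $X = \bigsqcup_{n\in\IN}X_n$ with $Y$ a net of $X_0$ and each $X_n \subseteq \text{Pen}(Y,S_n)$, one checks that the geometric ideal $C^*_{\max}(X,Y)$ agrees (up to the stabilization built into the definitions) with the block-diagonal-at-infinity structure, so that $C^*_{\max,Y,\infty}(X)$ is precisely the algebra $C^*_{u,\max,Y,\infty}((X_n)_{n\in\IN})$ with trivial coefficients; this is the same bookkeeping used to set up the twisted algebras earlier in Section~\ref{sec: coarsely proper algebra for FCE}.

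The second, and main, task is to show the connecting homomorphisms in the six-term sequence are zero, equivalently that $K_*(C^*_{\max}(X)) \to K_*(C^*_{u,\max,Y,\infty}((X_n)_{n\in\IN}))$ is surjective and $K_*(C^*_{\max}(X,Y)) \to K_*(C^*_{\max}(X))$ is injective. The key structural fact I would exploit is the sparseness decomposition: writing $Y_k = \bigsqcup_{n=0}^k X_n$, we have $C^*_{\max}(X,Y) = \varinjlim_k C^*_{\max}(Y_k)$, while $X$ decomposes coarsely into $Y_k$ together with a tail $\bigsqcup_{n>k}X_n$ that is ``far away'' (distances tending to $\infty$). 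Because the components are a coarse \emph{disjoint} union, $C^*_{\max}(X)$ splits as a (maximal) direct-sum-type assembly of $C^*_{\max}(X_n)$'s plus coupling terms, and in the limit the quotient by $C^*_{\max}(X,Y)$ only sees the tail. The cleanest way to get the splitting in $K$-theory is to produce, at each finite stage, a $*$-homomorphism section (or an asymptotic/$K$-theoretic section) $C^*_{u,\max,Y,\infty}((X_n)_{n\in\IN}) \dashrightarrow C^*_{\max}(X)$ realized by a block-diagonal inclusion supported on the tail components; since the tail is coarsely disjoint from $Y_k$, such a block-diagonal operator genuinely lies in $C^*_{\max}(X)$ and maps onto the given class in the quotient. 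This gives surjectivity of $\pi_*$; injectivity of $i_*$ then follows from exactness of the six-term sequence together with the vanishing of one connecting map, or symmetrically by the same splitting argument applied in the other degree.

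The step I expect to be the genuine obstacle is the surjectivity of $\pi_*$ on $K$-theory — equivalently constructing the section at the level of $K$-theory. The subtlety is that an arbitrary $K$-theory class in the quotient $C^*_{u,\max,Y,\infty}$ is represented by elements with a priori unbounded propagation that may not split cleanly across the block decomposition; one must first approximate by finite-propagation representatives and then use that, for each fixed propagation $R$, operators of propagation $\le R$ supported outside $\text{Pen}(Y, R + S_k)$ lie in the tail $\bigsqcup_{n>k}X_n$ and hence lift. Carrying this through requires a careful commutation of the inductive limit $\varinjlim_k$ with $K$-theory (continuity of $K$-theory under inductive limits) and with the maximal completion — this is where the maximal norm's universal property is essential and where the argument genuinely differs from the reduced case (in which the analogous surjectivity fails, matching Theorem~H's asymmetry). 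Once surjectivity is established, the rest of the six-term sequence collapses formally and the stated short exact sequence follows.
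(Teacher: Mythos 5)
Your overall architecture is right — identify the quotient via the groupoid picture, then argue the six-term sequence degenerates by producing either a surjection of $K_*(C^*_{\max}(X))$ onto $K_*(C^*_{u,\max,Y,\infty})$ or an injection of $K_*(C^*_{\max}(X,Y))$ into $K_*(C^*_{\max}(X))$. The difficulty you flag is exactly where your argument is incomplete, and you do not close it. You propose a ``$*$-homomorphism section $C^*_{u,\max,Y,\infty}((X_n)_{n\in\IN}) \dashrightarrow C^*_{\max}(X)$ realized by a block-diagonal inclusion supported on the tail components.'' But for the coarse disjoint union $X$ the block cut-down $T \mapsto \chi_{X\setminus Y_k} T\, \chi_{X\setminus Y_k}$ is \emph{not} a $*$-homomorphism: although the components $X_n$ are far apart, an operator of finite propagation can still couple adjacent components, so $\chi_{X\setminus Y_k}(ST)\chi_{X\setminus Y_k} \neq \big(\chi_{X\setminus Y_k}S\chi_{X\setminus Y_k}\big)\big(\chi_{X\setminus Y_k}T\chi_{X\setminus Y_k}\big)$ in general. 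You hedge with ``asymptotic/$K$-theoretic section,'' but you never say what that object is or why it exists; approximating a class in the quotient by a finite-propagation representative does not by itself produce a projection or a unitary upstairs, and this is precisely the lifting problem your argument has to solve.

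The paper closes this gap in two genuinely different ways. In the first, it bypasses the section entirely and instead proves injectivity of $K_*(C^*_{\max}(Y))\to K_*(C^*_{\max}(X))$ directly: if a unitary $u$ over $Y$ becomes trivial in $K_1(C^*_{\max}(X))$, one approximates the trivializing homotopy $u_t$ by finite-propagation $\varepsilon$-unitaries $w_t$; because $X$ is sparse relative to $Y$, a finite-propagation operator splits as $w_t = w_t' \oplus w_t''$ with $w_t' \in \IC[Y_k]$ supported on a penumbra $Y_k$ of $Y$ and $w_t''$ on the tail, and polar decomposition of $w_t'$ produces a path of unitaries in $C^*_{\max}(Y_k)^+$ trivializing $u$ there; no lift of quotient classes is needed. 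In the second approach, the paper replaces $X$ by the \emph{separated} disjoint union $X'$, where distances between distinct components are infinite; there the cut-down $T \mapsto \chi_{Y_k'}T\chi_{Y_k'}$ \emph{is} a $*$-homomorphism (no cross terms survive at all), yielding a genuine splitting and hence exactness for $X'$; the result is then transferred back to $X$ using the comparison of maximal Roe algebras of $X$ and $X'$ from \cite[Lemma 12.5.4]{WY2020}. Either the polar-decomposition confinement argument or the passage to the separated disjoint union is the missing ingredient in your proposal; without one of them (or an equivalent device), the claim that the quotient lifts on $K$-theory is unjustified.
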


\begin{proof}
We provide two approaches to this lemma here.

\noindent{\bf Approach 1.} For the first one, the idea of the proof is similar to \cite[Proposition 2.10]{OY2009}. It is proved in \cite{HRY1993} that $C^*_{\max}(X, Y)=\lim\limits_{R\to\infty}C^*_{\max}(\text{Pen}(Y, R))=\lim\limits_{k\to\infty}C^*_{\max}(Y_k)$. Since $Y_k$ is coarsely equivalent with $Y$ for any $k$, we conclude that $K_*(C^*_{\max}(X, Y))\cong K_*(C^*_{\max}(Y))$. Moreover, the canonical inclusion map $C^*(Y)\to C^*(X, Y)$ induces this isomorphism. Thus, by the six-term exact sequence in $K$-theory, it suffices to prove that the inclusion $i: C^*(Y)\to C^*(X)$ induces an injection on $K$-theory.

For any unitary $u\in C^*_{\max}(Y)^+$, if $i_*([u])=0\in K_1(C^*_{\max}(X))$, then there exists a homotopy of unitary $u_t\in C^*_{\max}(X)^+$ such that $u_0=u$ and $u_1=I$. Here we do not need to tensor both sides with matrix algebra when taking this homotopy since $C^*_{\max}(X)$ is quasi-stable. Take a sufficiently small $\varepsilon$, we can take a homotopy $w_t\in\IC[X]$ such that $\|w_t-u_t\|\leq\varepsilon/3$ and $\prop(w_t)\leq r$ for all $t\in [0,1]$. Then, $w_t$ is a $\varepsilon$-unitary for each $t\in[0,1]$. Since $w_t$ has propagation less than $r$, there exists $k>0$ such that $w_t$ is split into a direct sum $w'_t\in\IC[Y_k]$ and $w''_t\in\IC[X\backslash Y_k]$ by ($2_b$) in Definition \ref{def: sparse with respect to Y}. Then $w'_t$ forms a homotopy of $\varepsilon$-unitary. Take a polar decomposition to $w'_t$ pointwise, we then have $w_t=a_tv_t$ where $v_t$ is a continuous path of unitaries in $C^*_{\max}(Y_k)$. Since $v_0$ is close to $u$ and $v_1$ is close to $I$, thus there are continuous paths linking $u$ and $v_0$, $v_1$ and $I$ in $C^*_{\max}(Y_k)^+$, respectively (see for example \cite[Lemma 2.1.3]{RLL2000}). Combining these two paths and $v_t$, we then get a continuous path of unitaries connecting $u$ and $I$ in $C^*_{\max}(X,Y)$. This shows that
$$K_1(C^*(Y))\xrightarrow{\cong} K_1(C^*(X, Y))\to K_1(C^*(X))$$
is an injection. The analog proof also holds for $K_0$, see \cite[Proposition 2.10]{OY2009} for details for $K_0$. This finishes the proof.

\noindent{\bf Approach 2.}  We can also prove this lemma by showing the quotient map
$$q: C^*_{\max}(X)\to C^*_{\max, Y,\infty}(X)$$
induces a surjection on $K$-theory. By \cite[Lemma 12.5.4]{WY2020}, the maximal coarse Baum-Connes conjecture of the coarse disjoint union of $(X_{n})_{n\in\IN}$ is equivalent to the separated disjoint union of $(X_{n})_{n\in\IN}$. By separated disjoint union, we mean that the metric between two different components is infinite, i.e., for any $x\in X_{n}$ and $x'\in X_{n'}$,
\[d\left(x,x'\right)=\left\{\begin{aligned}&\infty,&&\text{if }n\ne n'\\&d_{X_n}(x,x'),&&\text{otherwise.}\end{aligned}\right.\]
Define $X'$ to be the separated disjoint union of $(X_{n})_{n\in\IN}$. Then the algebraic Roe algebra of $X'$ is a subalgebra of $\prod_{n\in\IN}\IC[X_{n}]$ consisting of all sequences of operators with uniformly finite propagations. Denoted by $Y'_k$ the same set as $Y_k$ but equipped with the separated disjoint union metric. We then have the following commuting diagram
\[\begin{tikzcd}
0 \arrow[r] & \lim\limits_{k\to\infty}K_*(C^*_{\max}(Y_k')) \arrow[r] \arrow[d] & K_*(C^*_{\max}(X')) \arrow[r, "p_*"] \arrow[d] &  K_*(C^*_{\max, Y,\infty}(X)) \arrow[r] \arrow[d, "\cong"', "id_*"]  &0\\
 & K_*(C^*_{\max}(X,Y)) \arrow[r]           & K_*(C^*_{\max}(X)) \arrow[r, "q_*"]      & K_*(C^*_{\max, Y,\infty}(X))                   & 
\end{tikzcd}\]
Here are some explanations for the diagram above. One can check that there is also an exact sequence of $*$-algebra
\[0\to\lim_{k\to\infty}\IC[Y_k']\to\IC[X']\to\ICu[X]\to 0,\]
which induces an isomorphism after taking the maximal completion
\[0\to\lim_{k\to\infty}C^*_{\max}(Y_k')\to C^*_{\max}(X')\to\Cau(X)\to 0.\]
The reader is also referred to \cite[Lemma 12.5.4]{WY2020} for relevant discussion. It is easy to check $C^*_{\max}(Y_k')\to C^*_{\max}(X')$ induces an injection on the level of $K$-theory since $\IC[X']\to\IC[Y_k']$ defined by
$$T\mapsto \chi_{Y_k'}T\chi_{Y_k'},$$
where $\chi_{Y_k'}$ is the characteristic function on $Y_k'$, extends to a map $C^*_{\max}(X')\to C^*_{\max}(Y_k')$ after taking $C^*$-completion under the maximal norm. This map is a right inverse to the canonical inclusion $C^*_{\max}(Y_k')\to C^*_{\max}(X')$. This means that $K_*(C^*_{\max}(Y_k'))\to K_*(C^*_{\max}(X'))$ is injective for each $k$. Passing $k$ to infinity, we then have that $\lim_{k\to\infty}K_*(C^*_{\max}(Y_k'))\to K_*(C^*_{\max}(X'))$ is injective. The upper horizontal sequence is induced by the above exact sequence of $C^*$-algebras. All the vertical maps are induced by the inclusion. Since $id_*\circ p_*$ is surjective, by diagram chasing, $q_*$ must be surjective. This finishes the proof.
\end{proof}

\begin{Rem}\label{rem: injection of geometric ideal}
We should mention here $K_*(C^*(X, Y))\to K_*(C^*(X))$ is still an injection when taking the reduced norm, and the proof is the same with the first approach we given in Lemma \ref{lem: maximal Roe exact sequence}.
\end{Rem}

One can similarly prove the localization algebra counterpart of Lemma \ref{lem: maximal Roe exact sequence}. The proof shares the same idea with the second approach as above, we leave the details to the readers.

\begin{Lem}\label{lem: maximal localization exact sequence}
With the notation as above, there is an exact sequence
$$0\to \lim\limits_{d\to\infty}\lim\limits_{k\to\infty}K_*(P_d(Y_k))\to\lim\limits_{d\to\infty}K_*(P_d(X))\to \lim\limits_{d\to\infty}K_*(\CauL(P_d(X)))\to 0.\qed$$
\end{Lem}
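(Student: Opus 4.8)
The plan is to transcribe the second approach to Lemma~\ref{lem: maximal Roe exact sequence}, with Roe algebras replaced by localization algebras throughout. Fix $d\ge 0$. From Definition~\ref{def: localization algebra at infinity} one has the short exact sequence of $C^*$-algebras
$$0\to C^*_{L,\max,Y}(P_d(X))\to C^*_{L,\max}(P_d(X))\to \CauL((P_d(X_n))_{n\in\IN})\to 0,$$
where we use that, for $X$ sparse relative to $Y$, the twisted localization algebra at infinity built from the decomposition $X=\bigsqcup_n X_n$ is canonically identified with $C^*_{L,\max}(P_d(X))/C^*_{L,\max,Y}(P_d(X))$; this is the localization-algebra analogue of \cite[Lemma~12.5.4]{WY2020}, the point being that, modulo operators supported on finitely many of the $X_n$, an operator of bounded propagation on $P_d(X)$ decomposes along the components. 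By the six-term exact sequence of this extension it is enough to prove, for each $d$, that the quotient map
$$q_*\colon K_*(C^*_{L,\max}(P_d(X)))\longrightarrow K_*(\CauL((P_d(X_n))_{n\in\IN}))$$
is surjective: then the connecting homomorphism vanishes, the map $K_*(C^*_{L,\max,Y}(P_d(X)))\to K_*(C^*_{L,\max}(P_d(X)))$ is injective, and taking the direct limit over $d$ (a filtered colimit, hence exact) while invoking Lemma~\ref{lem: representable K-homology} to identify $\lim_k K_*(C^*_{L,\max}(Y_k))$ with $\lim_k K_*(P_d(Y_k))$ produces the asserted sequence.

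To prove surjectivity of $q_*$ I would, exactly as in Approach~2 of Lemma~\ref{lem: maximal Roe exact sequence}, pass to the separated disjoint union $X'$ of the family $(P_d(X_n))_{n\in\IN}$ --- the space with the same underlying set in which distinct components lie at infinite distance. There is an exact sequence of algebraic localization algebras
$$0\to \lim_{k\to\infty}\IC_L[Y'_k]\to \IC_L[X']\to \ICuL((P_d(X_n))_{n\in\IN})\to 0,$$
with $Y'_k=\bigsqcup_{n=0}^k P_d(X_n)\subseteq X'$, and completing in the maximal norm keeps it exact (same reasoning as in the Roe algebra case, via \cite[Lemma~12.5.4]{WY2020}):
$$0\to \lim_{k\to\infty}C^*_{L,\max}(Y'_k)\to C^*_{L,\max}(X')\xrightarrow{\;p\;} \CauL((P_d(X_n))_{n\in\IN})\to 0.$$
Since $Y'_k$ is a union of \emph{whole} components of $X'$, the characteristic function $\chi_{Y'_k}$ is a central projection in $C^*_{L,\max}(X')$ and one has a direct sum decomposition $C^*_{L,\max}(X')\cong C^*_{L,\max}(Y'_k)\oplus C^*_{L,\max}\bigl(\bigsqcup_{n>k}P_d(X_n)\bigr)$; hence $K_*(C^*_{L,\max}(Y'_k))\to K_*(C^*_{L,\max}(X'))$ is split injective for every $k$, and so is its colimit over $k$. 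Consequently the six-term sequence of the last extension collapses to a short exact sequence on $K$-theory, and in particular $p_*$ is surjective. The identity map of underlying sets induces a $*$-homomorphism $C^*_{L,\max}(X')\to C^*_{L,\max}(P_d(X))$, and the resulting commuting square
$$\begin{tikzcd}
K_*(C^*_{L,\max}(X')) \arrow[r] \arrow[d] & K_*(\CauL((P_d(X_n))_{n\in\IN})) \arrow[d,"\cong"] \\
K_*(C^*_{L,\max}(P_d(X))) \arrow[r,"q_*"] & K_*(\CauL((P_d(X_n))_{n\in\IN}))
\end{tikzcd}$$
--- whose right-hand vertical arrow is the identification from the first paragraph, an isomorphism by the localization-algebra analogue of \cite[Lemma~12.5.4]{WY2020} --- shows that $q_*$ is surjective because the top horizontal arrow is.

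The only delicate point, and the reason the argument is run in the maximal category, is that passing to the maximal completion must preserve exactness of the algebraic sequences --- the ideal $\lim_k \IC_L[Y'_k]$ must keep its relative maximal norm inside $C^*_{L,\max}(X')$, and the quotient norm on $\ICuL((P_d(X_n))_{n\in\IN})$ must coincide with the maximal norm of the quotient algebra. This is precisely the phenomenon already handled in Approach~2 of Lemma~\ref{lem: maximal Roe exact sequence} and in \cite[Lemma~12.5.4]{WY2020}, so no genuinely new difficulty arises; I expect it to be the step requiring the most care when the details are written out, everything else being a routine transcription of the Roe algebra argument (with the localization-algebra counterpart of Lemma~\ref{lem: representable K-homology} taking care of the comparison of the subalgebra terms).
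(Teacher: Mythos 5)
Your proposal is correct and follows exactly the route the paper has in mind: the paper states (just below Remark~5.4) that ``the proof shares the same idea with the second approach'' to Lemma~\ref{lem: maximal Roe exact sequence}, and you carry out precisely that transcription --- passing to the separated disjoint union, using that $\chi_{Y'_k}$ is a central projection so that the subalgebra terms split off, deducing surjectivity of the quotient map on $K$-theory, and then invoking the six-term sequence and Lemma~\ref{lem: representable K-homology}. Your direct-sum formulation of the splitting is a mild rephrasing of the paper's ``right inverse to the inclusion'' argument, but it is the same mechanism.
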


Now, we are ready to prove Theorem \ref{thm: reduction to infinity for maximal}.

\begin{proof}[Proof of Theorem \ref{thm: reduction to infinity for maximal} (1)]
By Lemma \ref{lem: maximal Roe exact sequence} and Lemma \ref{lem: maximal localization exact sequence}, we have the following commuting diagram on $K$-theory:
\[\begin{tikzcd}
0 \arrow[d]                    & 0 \arrow[d]            \\
\lim\limits_{d\to\infty}\lim\limits_{k\to\infty}K_*(P_d(Y_k)) \arrow[d] \arrow[r, "\mu_{\max,Y}"] & K_*(C^*_{\max}(X,Y)) \arrow[d, "\iota_*"] \\
\lim\limits_{d\to\infty}K_*(P_d(X)) \arrow[d] \arrow[r,"\mu_{\max,X}"]          & K_*(C^*_{\max}(X)) \arrow[d]            \\
\lim\limits_{d\to\infty}K_*(\CauL(P_d(X)))\arrow[d] \arrow[r,"\mu_{\max,Y,\infty}"]          &\lim\limits_{d\to\infty} K_*(\Cau(P_d(X))) \arrow[d]            \\
0                              & 0                     
\end{tikzcd}\]
All the horizontal maps are induced by the evaluation maps. To prove Theorem \ref{thm: reduction to infinity for maximal}, it suffices to prove the first horizontal map $\mu_{Y}$ is an isomorphism. We claim that $\mu_{Y}$ is equivalent to the maximal coarse assembly map for $Y$. Indeed, for $k\leq k'$ and $d\leq d'$, the canonical inclusion maps satisfy the following commuting diagram
\[\begin{tikzcd}
P_d(Y_k) \arrow[d] \arrow[r] &  P_d(Y_{k'})\arrow[d] \\
P_{d'}(Y_k) \arrow[r]           & P_{d'}(Y_{k'}) .         
\end{tikzcd}\]
The order of $\lim_{d\to\infty}$ and $\lim_{k\to\infty}$ on the first row can be exchanged. Thus, the $\mu_{Y}$ can be rewritten as
$$\mu_{Y}: \lim_{k\to\infty}\lim_{d\to\infty}K_*(P_d(Y_k))\to \lim_{k\to\infty}K_*(C^*_{\max}(Y_k))$$
which is the inductive limit of the assembly map of $Y_k$ as $k$ passes to infinity. Since $Y_k$ is coarsely equivalent to $Y$ and the (maximal) coarse Baum-Connes conjecture is invariant under coarse equivalence. Thus, to prove $\mu_{Y}$ is an isomorphism, it suffices to prove the maximal coarse Baum-Connes conjecture for $Y$. This finishes the proof.
\end{proof}

For the reduced case, Lemma \ref{lem: maximal Roe exact sequence} does not hold in this case. However, we have the following sequence
$$0\to K_*(C^*(X,Y))\xrightarrow{i_*} K_*(C^*(X))\xrightarrow{\pi_*} K_*(C^*_{u, Y,\infty}((X_{n})_{n\in\IN})).$$
Note that $\pi_*\circ i_*=0$, while this sequence is not exact at $K_*(C^*(X))$. The injectivity of $i_*$ is guaranteed by Lemma \ref{lem: maximal Roe exact sequence} and Remark \ref{rem: injection of geometric ideal}

\begin{proof}[Proof of Theorem \ref{thm: reduction to infinity for maximal} (2)]
Consider the following commuting diagram:
\[\begin{tikzcd}
0 \arrow[d]                    & 0 \arrow[d]            \\
\lim\limits_{d\to\infty}\lim\limits_{k\to\infty}K_*(P_d(Y_k)) \arrow[d] \arrow[r, "\mu_{Y}"] & K_*(C^*(X,Y)) \arrow[d, "\iota_*"] \\
\lim\limits_{d\to\infty}K_*(P_d(X)) \arrow[d] \arrow[r,"\mu_X"]          & K_*(C^*(X)) \arrow[d]            \\
\lim\limits_{d\to\infty}K_*(C^*_{L,Y,\infty}(P_d(X)))\arrow[d] \arrow[r,"\mu_{Y,\infty}"]          &\lim\limits_{d\to\infty} K_*(C^*_{Y,\infty}(P_d(X)))           \\
0                              &                      
\end{tikzcd}\]
As we discussed in the part (1) of Theorem \ref{thm: reduction to infinity for maximal}, the map $\mu_Y$ on the upper row is equivalent to the reduced assembly map for $Y$. Then the theorem follows by using a diagram chasing.
\end{proof}

As a corollary of Theorem \ref{thm: RCBC for RFCE spaces}, Corollary \ref{cor: RCNC for FCE into lp and Hadamard} and Theorem \ref{thm: reduction to infinity for maximal}, we have the following result.

\begin{Thm}\label{thm: CBC for RFCE}
Let $X$ be a metric space with bounded geometry which is sparse with respect to $Y\subseteq X$. \begin{itemize}
\item[(1)] If $Y$ satisfies the maximal coarse Baum-Connes conjecture and $X$ admits a relative fibred coarse embedding into Hilbert space w.r.t. $Y$, then the maximal coarse Baum-Connes conjecture holds for $X$.
\item[(2)] If $Y$ satisfies the coarse Novikov conjecture and $X$ admits a relative fibred coarse embedding into an $\ell^p$-space (with $p\in[1,\infty)$) or a Hadamard manifold w.r.t. $Y$, then the coarse Novikov conjecture holds for $X$.
\end{itemize}\end{Thm}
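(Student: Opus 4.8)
The plan is to assemble this statement from three results already established in the paper: Theorem \ref{thm: RCBC for RFCE spaces}, Corollary \ref{cor: RCNC for FCE into lp and Hadamard}, and Theorem \ref{thm: reduction to infinity for maximal}. The mechanism is that the relative fibred coarse embedding hypothesis on $X$ produces the \emph{relative} conjecture for the pair $(X,Y)$, the hypothesis on $Y$ supplies the ``local'' input, and Theorem \ref{thm: reduction to infinity for maximal} is exactly the bridge that turns the former into the global conjecture for $X$. Note that $X$ is assumed sparse with respect to $Y$ from the outset, so no appeal to the reduction of Proposition \ref{pro: reduction to sparse spaces} is needed here.

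For part (1): first I would apply Theorem \ref{thm: RCBC for RFCE spaces}. Since $X$ admits a relative fibred coarse embedding into Hilbert space w.r.t. $Y$, the maximal relative coarse assembly map $\mu_{\max,Y,\infty}$ is an isomorphism, i.e. the maximal relative coarse Baum-Connes conjecture holds for $(X,Y)$. Since moreover $X$ is sparse with respect to $Y$ and $Y$ satisfies the maximal coarse Baum-Connes conjecture, Theorem \ref{thm: reduction to infinity for maximal}(1) applies and identifies the maximal relative coarse Baum-Connes conjecture for $(X,Y)$ with the maximal coarse Baum-Connes conjecture for $X$; combining the two gives the conclusion. (It is at this step that one genuinely uses that the model space is Hilbert space, since Theorem \ref{thm: RCBC for RFCE spaces} is proved only in that case.) For part (2): analogously, Corollary \ref{cor: RCNC for FCE into lp and Hadamard} shows that a relative fibred coarse embedding of $X$ into an $\ell^p$-space ($1\le p<\infty$) or into a Hadamard manifold w.r.t. $Y$ forces $\mu_{Y,\infty}$ to be injective, i.e. the relative coarse Novikov conjecture for $(X,Y)$ holds. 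Feeding this together with the coarse Novikov conjecture for $Y$ into Theorem \ref{thm: reduction to infinity for maximal}(2) then yields the coarse Novikov conjecture for $X$.

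The only point that genuinely requires care — and hence the ``hard part,'' such as it is — is a compatibility check: one must verify that the relative coarse assembly maps $\mu_{\max,Y,\infty}$ and $\mu_{Y,\infty}$ produced by Theorem \ref{thm: RCBC for RFCE spaces} and Corollary \ref{cor: RCNC for FCE into lp and Hadamard} are literally the maps occupying the bottom rows of the commuting diagrams used in the proof of Theorem \ref{thm: reduction to infinity for maximal}. This is immediate once the definitions are unwound, since on both sides these maps are induced by the evaluation-at-$0$ homomorphisms $C^*_{L,Y,\infty}(P_d(X))\to C^*_{Y,\infty}(X)$ (resp. their maximal analogues) followed by passing to the limit over $d$; with this identification in hand, the diagram chase of Theorem \ref{thm: reduction to infinity for maximal} closes and delivers the statement. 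So, modulo this bookkeeping, the theorem is a formal corollary, and I would present it as such.
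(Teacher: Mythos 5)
Your proposal is correct and matches the paper's approach exactly: the paper presents Theorem \ref{thm: CBC for RFCE} as a direct corollary of Theorem \ref{thm: RCBC for RFCE spaces}, Corollary \ref{cor: RCNC for FCE into lp and Hadamard}, and Theorem \ref{thm: reduction to infinity for maximal}, which is precisely the chain of implications you spell out. Your remark that the sparsity hypothesis renders Proposition \ref{pro: reduction to sparse spaces} unnecessary here, and the compatibility check on the assembly maps, are the right points of care and are consistent with the paper's (implicit) reasoning.
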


\subsection{Product of FCE spaces}

As a byproduct of Theorem \ref{thm: CBC for RFCE}, we have the following corollary:

\begin{Cor}\label{cor: FCEH times FCEH}
Let $(X_i)_{i=1}^N$ be a finite family of bounded geometry metric spaces that admit a fibred coarse embedding into Hilbert space. Then the maximal coarse Baum-Connes conjecture holds for $\prod_{i=1}^NX_i$.
\end{Cor}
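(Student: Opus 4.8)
The plan is to prove Corollary~\ref{cor: FCEH times FCEH} by induction on $N$, reducing the product of $N$ fibred coarsely embeddable spaces to a relative statement about the product of the last $N-1$ factors with the first one, and then applying Theorem~\ref{thm: CBC for RFCE}(1). First I would dispose of the case $N=1$, which is exactly the statement that the maximal coarse Baum-Connes conjecture holds for an FCE space; this is already known from \cite{CWY2013, GLWZ2024} (and also follows from our Theorem~\ref{thm: RCBC for RFCE spaces} taking $Y=\emptyset$, since then the relative conjecture is the global one). For the inductive step, I would set $X = \prod_{i=1}^N X_i = X_1 \times \left(\prod_{i=2}^N X_i\right)$ and choose a base point $x_2$ in $\prod_{i=2}^N X_i$. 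The subspace $Y = X_1 \times \{x_2\}$ is coarsely equivalent to $X_1$, hence satisfies the maximal coarse Baum-Connes conjecture by the induction hypothesis (or the $N=1$ case).

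The next step is to verify that $X$ admits a relative fibred coarse embedding into Hilbert space with respect to $Y = X_1 \times \{x_2\}$. This is essentially the content of the final Remark of Section~\ref{section 3.1}, which asserts that without the assumption that the factors decompose as coarse disjoint unions of finite spaces, the product admits a relative fibred coarse embedding with respect to $X_1 \times \{x_2\} \cup X_2 \times \{x_1\}$; one adapts the proof of Proposition~\ref{pro: finite product of FCE spaces} verbatim, replacing the $1$-boundary by the single ``axis'' $X_1 \times \{x_2\}$. Concretely, writing each point of $X$ as $x = (x^{(1)}, x')$ with $x^{(1)} \in X_1$ and $x' \in \prod_{i=2}^N X_i$, one observes that $\text{Pen}(Y,S)^c$ consists of points with $d(x', x_2) > S$ up to a coarse error, and on such a region the fibred coarse embedding of $\prod_{i=2}^N X_i$ (which exists by Proposition~\ref{pro: finite product of FCE spaces} applied to $X_2, \ldots, X_N$, or more simply by iterating the product construction for fibred coarse embeddings) provides the needed trivializations away from $Y$; one takes the orthogonal direct sum of a fixed section for $X_1$ with the section coming from $\prod_{i=2}^N X_i$, exactly as in the proof of Proposition~\ref{pro: finite product of FCE spaces}, and checks conditions (1) and (2) of Definition~\ref{def: partial FCE} by the same $\ell^2$-estimate and the same block-diagonal argument for the transition isometries.

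With these two inputs in hand --- namely that $Y \simeq X_1$ satisfies the maximal coarse Baum-Connes conjecture and that $X$ admits a relative fibred coarse embedding into Hilbert space with respect to $Y$ --- one must first pass to a space sparse relative to $Y$ in order to apply Theorem~\ref{thm: CBC for RFCE}; but this is handled by Proposition~\ref{pro: reduction to sparse spaces}, whose Mayer-Vietoris decomposition $\{X^{(0)}, X^{(1)}\}$ preserves both the relative fibred coarse embeddability and the coarse type of $Y$, and the maximal coarse Baum-Connes conjecture itself passes through the same Mayer-Vietoris sequence. Hence Theorem~\ref{thm: CBC for RFCE}(1) applies and yields that the maximal coarse Baum-Connes conjecture holds for $X = \prod_{i=1}^N X_i$, completing the induction.

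\textbf{Main obstacle.} The routine parts are the $\ell^2$-estimates in Definition~\ref{def: partial FCE}(1) and the block-diagonal transition isometries in (2). The genuine point of care is the bookkeeping needed to identify $\text{Pen}(Y,S)^c$ for $Y = X_1 \times \{x_2\}$ with a region on which the product metric is (coarsely) controlled by the distance to $x_2$ in the last $N-1$ coordinates alone, so that the existing fibred coarse embedding of $\prod_{i=2}^N X_i$ --- which only gives trivializations outside a \emph{bounded} set, not outside an arbitrary penetration neighborhood --- can be translated into a \emph{relative} trivialization over $X$. Once one notes that ``bounded in $\prod_{i=2}^N X_i$'' pulls back to a penetration neighborhood of $Y$ in $X$ (because the $X_1$-direction is unconstrained), this obstacle dissolves, and the rest is the verbatim repetition of Proposition~\ref{pro: finite product of FCE spaces} together with the reduction and assembly-map machinery already established. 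One should also remark, as the paper does after Corollary~\ref{cor: CNC for FCEtimesFCE}, that the resulting product space $\prod_{i=1}^N X_i$ typically fails to admit a fibred coarse embedding into Hilbert space itself, so this genuinely extends the class of spaces for which the maximal coarse Baum-Connes conjecture is known.
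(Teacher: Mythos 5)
Your plan has a genuine gap at the key geometric step. You claim that $X = X_1 \times \prod_{i=2}^N X_i$ admits a relative fibred coarse embedding into Hilbert space with respect to the \emph{single axis} $Y = X_1 \times \{x_2\}$, but this is not supported by the Remark you cite, and it appears to be false in general. The Remark after Proposition~\ref{pro: finite product of FCE spaces} asserts a relative fibred coarse embedding with respect to the \emph{cross} $X_1 \times \{x_2\} \cup \{x_1\} \times X_2$ (the union of all axes), not a single axis, and the distinction is decisive: $\text{Pen}(X_1 \times \{x_2\}, S)^c$ only constrains the $\prod_{i\ge 2} X_i$--coordinate, leaving the $X_1$--coordinate unconstrained. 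Condition (1) of Definition~\ref{def: partial FCE} demands a two-sided coarse estimate $\rho_-(d(z_1,z_2)) \le \| t(z_1)s(z_1) - t(z_2)s(z_2)\|$, so the section must coarsely separate points that differ only in the $X_1$--direction; this requires $X_1$--trivializations over balls centred at \emph{every} $a\in X_1$, including those inside the bounded set where the fibred coarse embedding of $X_1$ supplies nothing. An FCE space need not be coarsely embeddable, so there is no ``fixed section for $X_1$'' that works globally, and one cannot simply patch over the bounded set while preserving the compatibility condition (2). Along the way you also assert that ``the fibred coarse embedding of $\prod_{i=2}^N X_i$ \ldots exists by Proposition~\ref{pro: finite product of FCE spaces}, or \ldots by iterating the product construction'' --- but that proposition only produces a \emph{relative} fibred coarse embedding, and the whole point of this corollary is that products of FCE spaces need \emph{not} be FCE.

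The paper takes a structurally different route that avoids exactly this difficulty. It first reduces, via the Mayer--Vietoris decomposition of Lemma~\ref{lem: reduction to coarse disjoint union}, to the case where each $X_i$ is a coarse disjoint union of finite spaces. In that regime the relevant subspace is the $1$-boundary $F_1 = \Delta^{(N-1)}$ (the union of all hyperfaces), whose complement forces \emph{every} coordinate to have a large label, so that all $N$ FCE structures contribute trivializations simultaneously. Applying Theorem~\ref{thm: CBC for RFCE}(1) then reduces the statement to the maximal coarse Baum--Connes conjecture for $F_1$, which is handled by a second, internal induction down the skeletons $\Delta^{(k)}$: each $\Delta^{(k+1)}$ decomposes into pieces coarsely equivalent to products of only $k+1$ FCE factors, again with a relative FCE with respect to $\Delta^{(k)}$, until one reaches $\Delta^{(1)}$, which genuinely admits a fibred coarse embedding since $\Delta^{(0)}$ is bounded. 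If you wish to salvage an induction on $N$, you would need to take $Y$ to be the full cross rather than one axis (and then handle the sparsity hypothesis of Theorem~\ref{thm: CBC for RFCE} with respect to the cross), which is morally what the skeleton induction accomplishes; using a single axis does not.
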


As we noted earlier, the product of finitely many fibred coarsely embeddable (FCE) spaces may not be FCE, and it's unclear if the coarse Baum-Connes conjecture holds for such products (only the twisted case has been studied \cite{DG2024}). Therefore, these results actually provide many examples of spaces that satisfy the maximal coarse Baum-Connes conjecture but are not FCE.

Before we can prove Corollary \ref{cor: FCEH times FCEH}, we need the following lemma.

\begin{Lem}\label{lem: reduction to coarse disjoint union}
To prove the maximal coarse Baum-Connes conjecture for $\prod_{i=1}^NX_i$, it suffices to prove it for the case where each $X_i$ is a sparse metric space, where $i=1,\cdots, N$.
\end{Lem}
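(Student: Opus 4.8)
The plan is to reduce the general case to the sparse case by peeling off the factors one at a time, using in each step the annular decomposition from the proof of Proposition~\ref{pro: reduction to sparse spaces} together with a Mayer--Vietoris argument for the maximal coarse Baum--Connes assembly map.

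First I would record the one-factor decomposition. Fix a basepoint $x_i^0\in X_i$ and apply the construction in the proof of Proposition~\ref{pro: reduction to sparse spaces} with the bounded set $Y=\{x_i^0\}$: setting $A_{i,n}=\{x\in X_i\mid n^3-n< d(x,x_i^0)<(n+1)^3+n+1\}$, $X_i^{(0)}=\bigsqcup_{n}A_{i,2n}$ and $X_i^{(1)}=\bigsqcup_{n}A_{i,2n+1}$, one obtains an $\omega$-excisive cover $\{X_i^{(0)},X_i^{(1)}\}$ of $X_i$ with intersection $X_i^{(0)}\cap X_i^{(1)}$. Because $X_i$ has bounded geometry, every ball about $x_i^0$, hence every annulus $A_{i,n}$, is finite, and the cubic growth of the radii forces $d(A_{i,2n},A_{i,2n'})\to\infty$ as $n+n'\to\infty$; thus $X_i^{(0)}$, $X_i^{(1)}$ and $X_i^{(0)}\cap X_i^{(1)}$ are all coarse disjoint unions of finite metric spaces, i.e.\ sparse spaces, and being subspaces of $X_i$ they each still admit a fibred coarse embedding into Hilbert space.

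Next I would induct on the number of ``not-yet-decomposed'' factors. Assume $Z_1,\dots,Z_N$ are bounded-geometry spaces admitting fibred coarse embeddings into Hilbert space with $Z_j$ sparse for $j<k$ and $Z_j=X_j$ for $j\ge k$, and write $W=\prod_{i\ne k}Z_i$. Decomposing the $k$-th factor as above gives the cover
$$\prod_{i=1}^N Z_i=\bigl(X_k^{(0)}\times W\bigr)\cup\bigl(X_k^{(1)}\times W\bigr),$$
whose intersection is $\bigl(X_k^{(0)}\cap X_k^{(1)}\bigr)\times W$. Since $\{X_k^{(0)},X_k^{(1)}\}$ is $\omega$-excisive in $X_k$ and $\text{Pen}(A\times W,R)=\text{Pen}(A,R)\times W$ for a full product factor $W$, this is an $\omega$-excisive cover of $\prod_i Z_i$, and each of the three spaces occurring has a sparse space in the $k$-th coordinate and $X_j$ in the coordinates $j>k$ --- one more sparse factor than before, with every factor still fibred coarsely embeddable into Hilbert space. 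Comparing the Mayer--Vietoris six-term sequences for $\lim_d K_*(P_d(-))$ and for $K_*(C^*_{\max}(-))$ attached to this cover --- which are available in the maximal setting, cf.\ the Mayer--Vietoris argument used in Proposition~\ref{pro: reduction to sparse spaces} and \cite{WY2020} --- the maximal assembly maps intertwine the two sequences, so by the five lemma the maximal coarse Baum--Connes conjecture for the three cover pieces implies it for $\prod_i Z_i$. Iterating over $k=1,\dots,N$ reduces the conjecture for $\prod_{i=1}^N X_i$ to the case in which every factor is sparse, which is exactly the assertion of the lemma; note that each step triples the number of spaces to consider, so after $N$ steps one is left with finitely many products of $N$ sparse spaces.

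I expect the main obstacle to be the maximal Mayer--Vietoris sequence itself: unlike the reduced case, where $C^*(A)+C^*(B)=C^*(A\cup B)$ and $C^*(A)\cap C^*(B)=C^*(A\cap B)$ produce the six-term sequence directly, the maximal completion does not visibly respect such intersections, so one must verify --- as in \cite{WY2020} and as implicitly used in Proposition~\ref{pro: reduction to sparse spaces} and Lemma~\ref{lem: maximal Roe exact sequence} --- that the pullback square of maximal Roe algebras still yields a Mayer--Vietoris sequence and that it is compatible, under the assembly map, with the Mayer--Vietoris sequence in coarse $K$-homology. Granting this (standard but delicate) input, the rest is a routine diagram chase.
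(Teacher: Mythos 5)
Your proposal is correct and follows essentially the same route as the paper: a cubic-growth annular decomposition of each factor into even/odd pieces $X_i^{(0)}$, $X_i^{(1)}$ (which are sparse by bounded geometry), an $\omega$-excisive cover of the product obtained by decomposing one factor at a time, Mayer--Vietoris for both sides of the maximal assembly map, and iteration over the $N$ coordinates. Your write-up is slightly more explicit than the paper's on two points the paper leaves implicit -- the identity $\text{Pen}(A\times W,R)=\text{Pen}(A,R)\times W$ which gives $\omega$-excisiveness of the product cover, and the invocation of the five lemma to compare the Mayer--Vietoris sequences for $\lim_d K_*(P_d(-))$ and $K_*(C^*_{\max}(-))$ -- but these are clarifications of the same argument rather than a different one.
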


\begin{proof}
We shall prove this lemma by induction. It is proved in \cite{CWY2013} that this lemma holds when $N=1$. Fix the base point $x_i\in X_i$ for each $i$, define
$$X_{i,n}=\left\{x\in X\mid n^3-n\leq d(x,x_i)\leq (n+1)^3+(n+1)\right\}.$$
Set
$$X^{(0)}_i=\bigcup_{n:\text{even}} X_{i,n}\quad\text{and}\quad X^{(1)}_i=\bigcup_{n:\text{odd}} X_{i,n}.$$
Since $X_i$ has bounded geometry, $X_i^{(0)}, X_i^{(1)}, X_i^{(0)}\cap X_i^{(1)}$ are all coarse disjoint unions of finite spaces and the pair $(X_i^{(0)}, X_i^{(1)})$ forms an $\omega$-excisive cover of $X_i$. Moreover, $X_i^{(0)}$, $X_i^{(1)}$ and $X_i^{(0)}\cap X_i^{(1)}$ all have FCE-structures for each $i$. By using a Mayer-Vietoris argument, to show the maximal coarse Baum-Connes conjecture for $\prod_{i=1}^NX_i$, it suffices to prove the maximal coarse Baum-Connes conjecture for
$$X_1^{(0)}\times\prod_{i=2}^NX_i,\quad X_1^{(1)}\times\prod_{i=2}^NX_i,\quad\text{and}\quad \left(X_1^{(0)}\cap X_1^{(1)}\right)\times\prod_{i=2}^NX_i.$$
By further induction, to show the maximal coarse Baum-Connes conjecture for $X_1^{(0)}\times\prod_{i=2}^NX_i$, it suffices to prove the maximal coarse Baum-Connes conjecture for
$$X_1^{(0)}\times X_2^{(0)}\times\prod_{i=3}^NX_n,\quad X_1^{(0)}\times X_2^{(1)}\times\prod_{i=2}^NX_i,\quad\text{and}\quad X_1^{(0)}\times \left(X_2^{(0)}\cap X_2^{(1)}\right)\times\prod_{i=2}^NX_i.$$
To sum up, it suffices to prove the maximal coarse Baum-Connes conjecture for the case that each $X_i$ is a coarse disjoint union of finite spaces.
\end{proof}

\begin{proof}[Proof of Corollary \ref{cor: FCEH times FCEH}]
As discussed in Lemma \ref{lem: reduction to coarse disjoint union}, it is sufficient to prove the case where each \(X_i\) is a coarse disjoint union of finite spaces. According to Theorem \ref{thm: CBC for RFCE}, Proposition \ref{pro: finite product of FCE spaces}, and Example \ref{exa: relative sparse}, it is sufficient to prove that the maximal coarse Baum-Connes conjecture holds for \(F_1\).

For $k=\{0,\cdots,N-1\}$, we define the \emph{$k$-skeleton of $X$}, denoted by $\Delta^{(k)}$, to be
\[\Delta^{(k)}=\bigcup_{1\leq i_1<\cdots<i_{N-k}\leq N}\left(F^{(i_1)}_1\cap\cdots\cap F^{(i_{N-k})}_1\right).\]
We take the product of three sparse spaces as an example, as shown in Figure \ref{Products of three FCE spaces}, where the elements in the product space resemble discrete cuboids. The three red lines represent three different spaces: $X_{1}$, $X_{2}$, and $X_{3}$. In this case, the 2-skeleton $\Delta^{(2)}$ represents the region on the three faces, the 1-skeleton $\Delta^{(1)}$ represents the region where the square intersects the three red lines, and the 0-skeleton $\Delta^{(0)}$ represents the origin where the three red lines intersect.

\begin{figure}[h]
    \centering
    \includegraphics[width=0.75\linewidth]{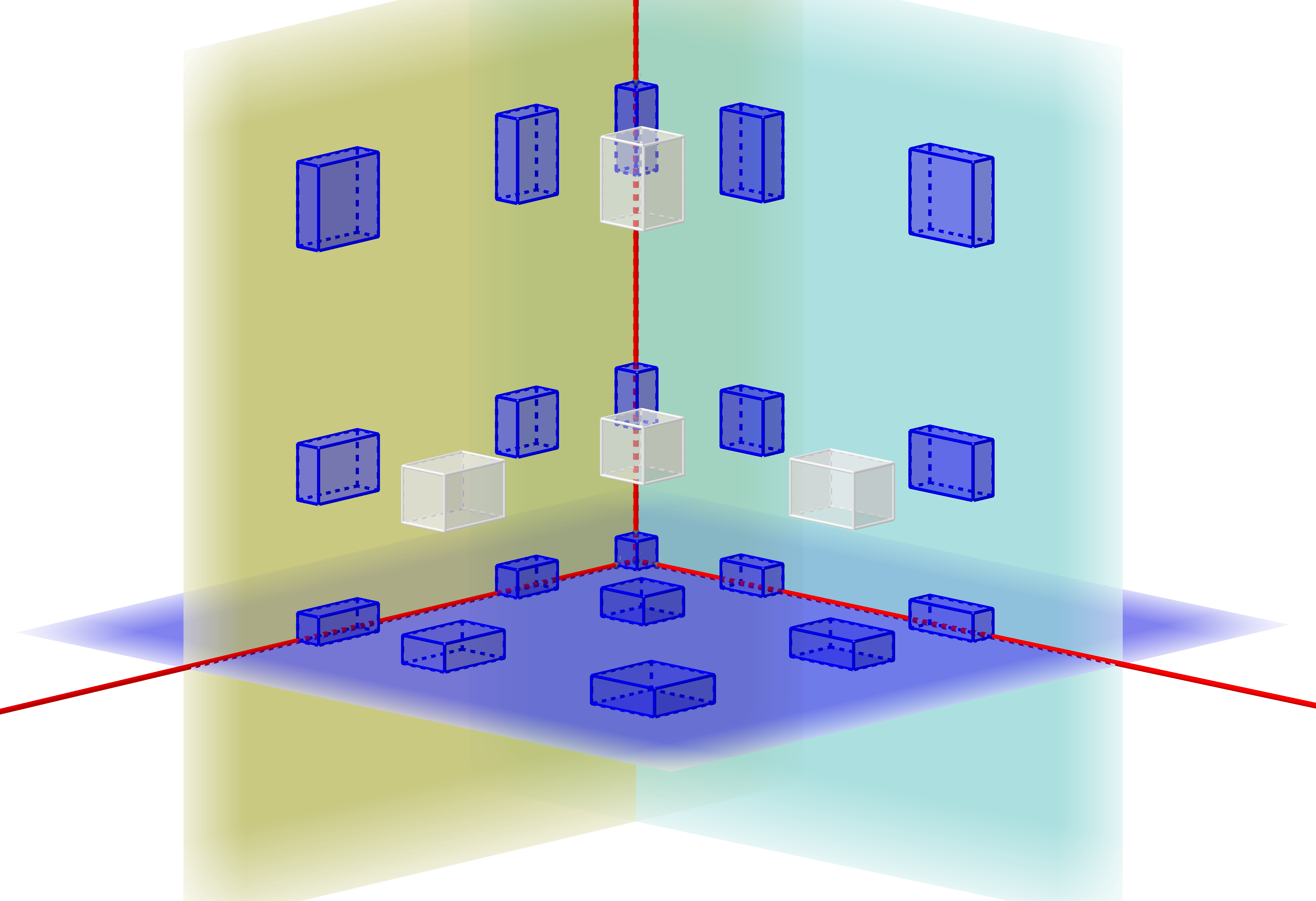}
    \caption{Products of three FCE spaces}
    \label{Products of three FCE spaces}
\end{figure}

The $(N-1)$-skeleton of $X$ is precisely the $1$-boundary of $X$, i.e., $\Delta^{(N-1)}=F_1$. Therefore, we now only need to prove the maximal coarse Baum-Connes conjecture for $\Delta^{(N-1)}$. We will accomplish this through induction. We claim that $\Delta^{(k+1)}$ admits a relative fibred coarse embedding into Hilbert space with respect to $\Delta^{(k)}$ for all $k\in\{0,\cdots,N-2\}$. Indeed, one can verify that $\Delta^{(k+1)}$ is a finite union of
\begin{equation}\label{eq: decomposition of skeleton}\prod_{j=1}^{N-(k+1)}X_{i_j,1}\times\prod_{i\in\{1,\cdots,N\}\backslash\{i_1,\cdots,i_{k+1}\}}X_i\end{equation}
where $\{i_1,\cdots,i_{k+1}\}$ should be taken as distinct from one another. The first part $\prod_{j=1}^{N-(k+1)}X_{i_j,1}$ is bounded, thus, the space defined as in \eqref{eq: decomposition of skeleton} is coarsely equivalent to its second part, which is a finite product of spaces with FCE structures. By Proposition \ref{pro: finite product of FCE spaces}, the space in \eqref{eq: decomposition of skeleton} admits a relative fibred coarse embedding into Hilbert space with respect to its boundary, which is a subspace of $\Delta^{(k)}$. Merging all these spaces as described in \eqref{eq: decomposition of skeleton} together, one can check that $\Delta^{(k+1)}$ admits a relative fibred coarse embedding into Hilbert space with respect to $\Delta^{(k)}$. By Theorem \ref{thm: reduction to infinity for maximal}, we have that the maximal coarse Baum-Connes conjecture for $\Delta^{(k+1)}$ is equivalent to the maximal coarse Baum-Connes conjecture for $\Delta^{(k)}$. By induction, it suffices to prove the maximal coarse Baum-Connes conjecture for $\Delta^{(1)}$. Note that $\Delta^{(1)}$ admits a relative fibred coarse embedding into Hilbert space with respect to $\Delta^{(0)}$, where
$$\Delta^{(0)}=\prod_{i=1}^NX_{i,1}$$
is bounded. Therefore, $\Delta^{(1)}$ admits a fibred coarse embedding into Hilbert space, whose maximal coarse Baum-Connes conjecture holds by \cite[Theorem 1.1]{CWY2013}. This finishes the proof.
\end{proof}

For the coarse Novikov conjecture, we also have the following corollary.

\begin{Cor}\label{cor: CNC for FCEtimesFCE}
For each $i\in\{1,\cdots, N\}$, assume $X_i$ is a sparse space with bounded geometry, i.e., $X_i=\bigsqcup_{n\in\IN}X_{i,n}$. If $X_i$ admits a fibred coarse embedding into Hilbert space (or a Hadamard manifold, or an $\ell^p$-space with $1\leq p<\infty$). Then the coarse Novikov conjecture holds for $\prod_{i=1}^NX_i$.
\end{Cor}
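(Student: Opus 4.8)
The plan is to repeat, essentially verbatim, the inductive argument that proves Corollary~\ref{cor: FCEH times FCEH}, with Theorem~\ref{thm: CBC for RFCE}(2) used in place of Theorem~\ref{thm: CBC for RFCE}(1) and with the coarse Novikov conjecture for fibred coarsely embeddable spaces serving as the base case. Write $X=\prod_{i=1}^NX_i$. Since each $X_i$ is already a coarse disjoint union of finite metric spaces, no preliminary reduction (Lemma~\ref{lem: reduction to coarse disjoint union}) is needed here. After replacing, if necessary, the $\ell^2$ product metric on $X$ by the $\ell^p$ product metric --- these are coarsely equivalent, there being only finitely many factors, and the coarse Novikov conjecture is a coarse invariant --- we may assume all the $X_i$ fibred coarsely embed into one fixed model space $E$, with $E$ a Hilbert space, a Hadamard manifold, or an $\ell^p$-space ($1\le p<\infty$); in the Hilbert case we regard $E=\ell^2$ as an $\ell^p$-space ($p=2$), so that Theorem~\ref{thm: CBC for RFCE}(2) applies.

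First I would set up the skeleton filtration of $X$ exactly as in the proof of Corollary~\ref{cor: FCEH times FCEH}: for $0\le k\le N-1$,
$$\Delta^{(k)}=\bigcup_{1\le i_1<\cdots<i_{N-k}\le N}\big(F^{(i_1)}_1\cap\cdots\cap F^{(i_{N-k})}_1\big),$$
so that $\Delta^{(0)}=\prod_{i=1}^NX_{i,1}$ is bounded, $\Delta^{(N-1)}=F_1$ is the $1$-boundary of $X$, and each $\Delta^{(k+1)}$ is a finite union of pieces, every piece being the product of $N-(k+1)$ bounded ``frozen'' factors of the form $X_{i,1}$ with $k+1$ ``free'' sparse factors $X_i$. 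As in Example~\ref{exa: relative sparse}, this shows $\Delta^{(k+1)}$ is sparse with respect to $\Delta^{(k)}$, and Proposition~\ref{pro: finite product of FCE spaces} --- together with the Remark following it, which covers Hadamard manifolds and $\ell^p$-spaces --- applied piecewise and glued along the common faces shows that $\Delta^{(k+1)}$ admits a relative fibred coarse embedding into $E$ with respect to $\Delta^{(k)}$; the same argument shows $X$ admits one with respect to $F_1$.

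Next I would run the induction on $k$. By Theorem~\ref{thm: CBC for RFCE}(2) (which packages Corollary~\ref{cor: RCNC for FCE into lp and Hadamard} with Theorem~\ref{thm: reduction to infinity for maximal}(2)), if the coarse Novikov conjecture holds for $\Delta^{(k)}$ then it holds for $\Delta^{(k+1)}$. The base case $\Delta^{(1)}$ admits an \emph{ordinary} fibred coarse embedding into $E$, because $\Delta^{(0)}$ is bounded and a relative fibred coarse embedding with respect to a bounded set is an ordinary one (see the remark after Definition~\ref{def: partial FCE}); hence the coarse Novikov conjecture holds for $\Delta^{(1)}$ by the known results for fibred coarsely embeddable spaces (\cite{CWY2013} for Hilbert space, \cite{GLWZ2024} for $\ell^p$-spaces, and \cite{SW2007} for Hadamard manifolds). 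Iterating over $k=0,1,\dots,N-2$ yields the coarse Novikov conjecture for $\Delta^{(N-1)}=F_1$. Finally, $X$ is sparse with respect to $F_1$ (Example~\ref{exa: relative sparse}) and admits a relative fibred coarse embedding into $E$ with respect to $F_1$, so one last application of Theorem~\ref{thm: CBC for RFCE}(2) delivers the coarse Novikov conjecture for $X$.

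Because this is a transcription of the proof of Corollary~\ref{cor: FCEH times FCEH}, the only genuinely new bookkeeping is tracking which model space $E$ is used (and the harmless change of product metric in the $\ell^p$ case) and invoking the reduced implication Theorem~\ref{thm: reduction to infinity for maximal}(2) rather than the maximal equivalence Theorem~\ref{thm: reduction to infinity for maximal}(1). The step requiring the most care --- but already carried out in Corollary~\ref{cor: FCEH times FCEH} --- is verifying that the fibre-wise trivializations produced by Proposition~\ref{pro: finite product of FCE spaces} on the individual pieces of $\Delta^{(k+1)}$ glue to a bona fide relative fibred coarse embedding of the whole skeleton with respect to $\Delta^{(k)}$. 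One should also keep in mind that, unlike the maximal case, Theorem~\ref{thm: reduction to infinity for maximal}(2) supplies only the forward implication (relative coarse Novikov $\Rightarrow$ global coarse Novikov), with no converse or surjectivity statement; but the induction uses precisely this forward implication at every step, so the asymmetry is harmless.
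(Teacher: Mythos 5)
Your proposal matches the paper's own proof: reduce to $F_1$ via Theorem~\ref{thm: CBC for RFCE}(2), Proposition~\ref{pro: finite product of FCE spaces}, and Example~\ref{exa: relative sparse}, then run the same skeleton induction from the proof of Corollary~\ref{cor: FCEH times FCEH} down to $\Delta^{(1)}$, which is fibred coarsely embeddable and hence satisfies the coarse Novikov conjecture. The paper is terser (it just says ``similar to Corollary~\ref{cor: FCEH times FCEH}'' and cites \cite{DGWY2025, GLWZ2024} for the base case), but the argument is the same.
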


\begin{proof}[Proof of Corollary \ref{cor: CNC for FCEtimesFCE}]
By Theorem \ref{thm: CBC for RFCE}, Proposition \ref{pro: finite product of FCE spaces}, and Example \ref{exa: relative sparse}, it suffices to prove the coarse Novikov conjecture holds for $F_1$. Similar to Corollary \ref{cor: FCEH times FCEH}, it finally suffices to prove the coarse Novikov conjecture for $\Delta^{(1)}$, which admits a fibred coarse embedding into Hilbert space (Hadamard space, or an $\ell^p$-space with $1\leq p<\infty$). The Novikov conjecture for $\Delta^{(1)}$ holds by \cite{DGWY2025, GLWZ2024}, this finishes the proof.
\end{proof}

\subsection{On the sensitivity of the relative coarse Baum-Connes conjecture to subset selection}

In the last part of this paper, we shall exhibit an example that shows the sensitivity of the relative coarse Baum-Connes conjecture to subspace selection.

Let $Y=\bigsqcup_{m\in\IN}Y_m$ be a box space of $\mathbb F_2$ that is also a sequence of expander graphs. By \cite{CWY2013}, we know that $Y$ admits a fibred coarse embedding into Hilbert space. Since $Y$ is an expander, the coarse Baum-Connes assembly map is not surjective. Let $Z=\{n^2\mid n\in\IN\}$ be the coarse disjoint union of a sequence of single points. Define $X=Y\times Z$ to be the product space equipped with the $\ell^2$-product metric. We shall view $Y$ as a subset of $X$ via the canonical inclusion $Y= Y\times\{0\}\hookrightarrow X$. Fix a base point $y_0\in Y_0\subseteq Y$, we also view $Z$ as a subset of $X$ via $Z=\{y_0\}\times Z\hookrightarrow X$.

%We then have the following result. Since the proof is quite technical, we will not show every detail and only sketch the proof.

\begin{Pro}
The relative coarse Baum-Connes conjecture holds for $(X, Z)$. However, the relative coarse Baum-Connes assembly map is not surjective for $(X, Y)$.
\end{Pro}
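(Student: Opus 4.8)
The plan is to establish the two assertions by different means: for $(X,Z)$ we verify the hypothesis of Theorem~\ref{thm: RCBC for RFCE spaces} directly, while for $(X,Y)$ we identify the relative Roe algebra at infinity as one that still ``remembers'' the expander $Y$ and transfer the classical failure of surjectivity for expanders.

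For $(X,Z)$: using the $\ell^2$-product metric and the fact that the $Z$-coordinate of every point of $X$ already lies in $Z$, one computes $\text{Pen}(Z,S)=\{(y,t)\in X:d_Y(y,y_0)\le S\}$, so $X\setminus\text{Pen}(Z,S)=\{y\in Y:d_Y(y,y_0)>S\}\times Z$; being far from $Z$ in $X$ is thus exactly being far from the base point $y_0$ inside $Y$. I would then build a relative fibred coarse embedding of $X$ into Hilbert space with respect to $Z$ out of the fibred coarse embedding of $Y$: given $R$, the latter yields a bounded $K\subseteq Y$ (contained in some ball $B_Y(y_0,S_1)$, as $Y$ is proper) outside which all $R$-balls of $Y$ trivialize; put $S=S_1+R$. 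For $x=(y,t)\in X\setminus\text{Pen}(Z,S)$ the ball $B_X(x,R)$ lies in $B_Y(y,R)\times(Z\cap[t-R,t+R])$; since $Z=\{n^2\}$ is quadratically spaced the second factor is a singleton once $t$ is large and a fixed bounded finite set otherwise, while $B_Y(y,R)\cap K=\emptyset$. Trivializing the $Y$-factor by the fibred coarse embedding of $Y$ and the $Z$-factor by the isometric inclusion $t\mapsto t\in\IR$ yields a trivialization into $H\oplus\IR$ satisfying Definition~\ref{def: partial FCE}(1) with $\rho_-(r)=\min(\rho_1(r/\sqrt2),r/\sqrt2)$ and $\rho_+(r)=(\rho_2(r)^2+r^2)^{1/2}$, and condition (2) follows from that of $Y$ together with the globally fixed choice on the $Z$-factor. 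By Theorem~\ref{thm: RCBC for RFCE spaces}, the relative coarse Baum--Connes conjecture (reduced and maximal) then holds for $(X,Z)$.

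For $(X,Y)$: here $d((y,t),(y,0))=t$, so $\text{Pen}(Y,S)=Y\times(Z\cap[0,S])$ is a union of finitely many slabs, and $X$ is sparse with respect to $Y$ in the sense of Definition~\ref{def: sparse with respect to Y}, with $X_n=Y\times\{n^2\}$ each isometric to the expander $Y$ and $d(X_n,X_m)\to\infty$. Let $p_Y\in C^*(Y)$ be the ghost (kernel-of-the-graph-Laplacian) projection produced by the uniform spectral gap; as $Y$ is a box space over a filtration its girths tend to infinity, so by the theorem of Higson and of Willett--Yu the class $[p_Y]\in K_0(C^*(Y))$ does not lie in the image of the coarse assembly map $\mu_Y$. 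The slab-diagonal projection $q=(p_Y)_n$ lies in $C^*(X)$ (because $p_Y$ is a norm-limit of uniformly finite-propagation locally compact operators --- e.g.\ of Chebyshev polynomials in the Laplacian, which converge uniformly over the blocks thanks to the common spectral gap) but $q\notin I_G(Y)$, since for every $\varepsilon>0$ the set $\supp_\varepsilon(q)$ meets every slab $X_n$ and so its range is not contained in any $\text{Pen}(Y,R)=\bigsqcup_{n^2\le R}X_n$; hence $[\pi(q)]\in K_0(C^{*}_{Y,\infty}(X))$ is a nonzero class. I would then run the reduced version of the ladder from the proof of Theorem~\ref{thm: reduction to infinity for maximal}(2): combining Lemma~\ref{lem: maximal localization exact sequence} with the sequence $K_*(C^*(X,Y))\to K_*(C^*(X))\xrightarrow{\pi_*}K_*(C^{*}_{Y,\infty}(X))$, the left vertical map $\lim_d K_*(P_d(X))\to\lim_d K_*(C^*_{L,Y,\infty}(P_d(X)))$ is onto, whence $\operatorname{im}\mu_{Y,\infty}=\pi_*(\operatorname{im}\mu_X)$; it therefore suffices to show $[q]\notin\operatorname{im}\mu_X+\ker\pi_*$. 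For this I would compose with the $*$-homomorphism $C^*(X)\to\prod_n C^*(X_n)/\bigoplus_n C^*(X_n)$, $T\mapsto[(\chi_{X_n}T\chi_{X_n})_n]$ (well defined, with kernel $C^*(X,Y)=\bigoplus_n C^*(X_n)$, because the slabs spread out), evaluate along a nonprincipal ultrafilter $\mathcal U$ into the ultrapower $(C^*(Y))_{\mathcal U}$, observe that $q\mapsto$ the diagonal image of $p_Y$ while $\operatorname{im}\mu_X$ maps into the classes represented by sequences $(\mu_Y(\eta_n))_n$ and $\ker\pi_*$ into (a controlled subgroup attached to) the ghost ideal of $(C^*(Y))_{\mathcal U}$, and conclude by a diagram chase using $[p_Y]\notin\operatorname{im}\mu_Y$.

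The step I expect to be the main obstacle is exactly this last transfer. Because $C^{*}_{Y,\infty}(X)=C^*(X)/I_G(Y)$ is formed with the \emph{ghostly} ideal $I_G(Y)$, which is genuinely larger than the geometric ideal $C^*(X,Y)$ even in this example, the natural maps out of $C^*(X)$ that detect $q$ do not factor through $C^{*}_{Y,\infty}(X)$, so one must argue at the level of $C^*(X)$ while keeping track of $\ker\pi_*=\operatorname{im}(K_*(I_G(Y)))$ and of the failure of $K$-theory to commute with the infinite products and ultrapowers involved. Making rigorous either that $[p_Y]$ avoids $\operatorname{im}\mu_Y$ together with all ghost-ideal contributions, or a Willett--Yu-type pairing localized at a single slab at infinity, is where the real work lies; by contrast the sparse-relative-to-$Y$ reduction, the construction of $q$, and the identity $\operatorname{im}\mu_{Y,\infty}=\pi_*(\operatorname{im}\mu_X)$ are routine.
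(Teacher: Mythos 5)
Your treatment of $(X,Z)$ is essentially the paper's: you build a relative fibred coarse embedding of $X$ into Hilbert space with respect to $Z$ by tensoring the FCE structure of $Y$ with the trivial isometric embedding of $Z\subseteq\mathbb{R}$ (the paper takes $\widetilde H_{(y,n^2)}=H_y\oplus\mathbb{R}$ with $\widetilde s(y,n^2)=(s(y),n^2)$), then invokes Theorem~\ref{thm: RCBC for RFCE spaces}. This part is correct.

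For $(X,Y)$ you have the right obstruction element --- your $q$ is, up to the stabilization by a rank-one projection, the paper's $P=\oplus_{m,n}p_{mn}\otimes q$ --- and your observations that $q\notin I_G(Y)$, that $X$ is sparse with respect to $Y$, and that $\operatorname{im}\mu_{Y,\infty}=\pi_*(\operatorname{im}\mu_X)$ by Lemma~\ref{lem: maximal localization exact sequence} are all correct. But you have explicitly left open the detection step, namely that $[q]\notin\operatorname{im}\mu_X+\ker\pi_*$, and you are right that this is exactly where the difficulty lies. Your proposed ultrapower argument, which cuts along the slabs $X_n=Y\times\{n^2\}$ and passes to $(C^*(Y))_{\mathcal U}$, does not obviously control $\ker\pi_*=\operatorname{im} K_*(I_G(Y))$: the ghostly ideal does not coarsen to anything manageable in $\prod_n C^*(X_n)/\oplus_n C^*(X_n)$, and you would still need to verify that $[p_Y]$ avoids both $\operatorname{im}\mu_Y$ and the ghost-ideal contribution simultaneously, which is precisely the issue one is trying to resolve.

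The paper closes this gap with two devices that your sketch does not use. First, instead of indexing by the slab direction $n$, it cuts along the box-space direction $m$, putting $W_m=Y_m\times Z$; each $W_m$ is coarsely equivalent to $Z$, which has Property~A, so $C^*(W_m)\cap I_G(Y)=\K$, and one gets a genuine $*$-homomorphism $tr$ from $C^*_{Y,\infty}(X)/\pi_{Y,\infty}(C^*(X,Z))$ to $\mathcal D=\prod_m C^*(W_m)/\K\,\big/\,\oplus_m C^*(W_m)/\K$. This trace detects $[P]\ne 0$ (its image is $[[1],[1],\dots]$) and annihilates anything coming from $C^*(X,Z)$, because $\operatorname{Pen}(Z,R)\cap W_m=\emptyset$ for $m$ large. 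Second, the paper exploits that $P\in I_G(Z)$, so $\pi_Z([P])=0$ in $K_*(C^*_{u,Y\cup Z,\infty}(X))$, and that the relative assembly map $\mu_{Y\cup Z,\infty}$ \emph{is} an isomorphism (again by Theorem~\ref{thm: RCBC for RFCE spaces}, since $X$ also admits a relative FCE with respect to $Y\cup Z$). A diagram chase through the ladder comparing the pairs $(X,Y)$ and $(X,Y\cup Z)$ then shows that a preimage of $[P]$ under $\mu_{Y,\infty}$ would be forced to come from $\lim_d K_*(\pi_{Y,\infty}(C^*_{L,Z}(P_d(X))))$, hence would produce an index landing in $K_*(C^*(X,Z))$, which $tr$ kills --- contradiction. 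Your argument needs some such replacement for the ultrapower step; in particular the crucial extra input is the relative conjecture for the \emph{larger} subset $Y\cup Z$, not just for $Z$.
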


\begin{proof}
First of all, it is not hard to show that $X$ is sparse with respect to both $Y$ and $Z$. By assumption, since $Y$ admits a fibred coarse embedding into Hilbert space, we shall show that $X$ admits a relative fibred coarse embedding into Hilbert space w.r.t. $Z$.

Indeed, take $(\H_y)_{y\in Y}$ to be the field of Hilbert space as in Definition \ref{def: FCE}. For any $R>0$, there exists $M\in\IN$ such that $(\H_w)_{w\in B(y,R)}$ admits a trivialization for all $y\in (\bigsqcup_{m=0}^NY_m)^c$. We define a field of real Hilbert spaces on $X$ by setting $\wt\H_{(y,n^2)}=\H_y\oplus\IR$ for any $(y,n^2)\in X$. The section $\wt s:X\to\bigsqcup \wt\H_x$ is defined to be
$$\wt s(y,n^2)=(s(y),n^2)\in\H_y\oplus\IR=\H_{(y,n^2)}.$$
For $R>0$, we take the same $N\in\IN$ as above, then for any $(y,n^2)\in (\bigsqcup_{m=0}^NY_m\times Z)^c$, one can define a trivialization of $(\wt\H_{(w,k^2)})_{(w,k^2)\in B((y,n^2),R)}$
$$t_{(y,n^2)}((w,k^2)):\wt\H_{(w,k^2)}\to\wt\H\quad\text{by }(v,t)\mapsto (t_y(w)(v),t).$$
From this construction, it is direct to check that this trivialization satisfies the two conditions in Definition \ref{def: partial FCE}. This shows that $X$ admits a relative fibred coarse embedding into Hilbert space w.r.t. $Z$. By Theorem \ref{thm: RCBC for RFCE spaces}, we prove the first part of this proposition.

For the second part, we denote $Y_{mn}=Y_m\times\{n^2\}$. Take $p_{mn}$ to be the average projection on $Y_{mn}$. Fix a rank one projection $q$ on $\ell^2(\IN)$. For $n=0$, it is known that $\oplus_{m\in\IN}p_{m0}\ox q$ determines an element in $K_0(C^*(Y))$ and this element can not be detected in $\lim_{d\to\infty}K_*(P_d(Y))$. One is referred to \cite[Section 13.2 \& Section 13.3]{WY2020} for detailed discussion.

Define $P=\oplus_{m,n\in\IN}p_{mn}\ox q$. With a similar argument with \cite[Lemma 13.2.10]{WY2020}, $P$ is a projection in $C^*(X)$. The matrix entries of $p_{mn}$ are all equal to $\frac{1}{\# Y_{m}}$. For fixed $m$, the matrix entries in $p_{mn}$ do not tend to $0$ as $n$ tends to $\infty$. Thus, $P$ is not in the ghostly ideal of $C^*(X)$ generated by $Y$. However, by definition, $P$ is in the ghostly ideal of $C^*(X)$ generated by $Z$. As a result, $[P]$ determines a non-zero element in $C^*_{Y,\infty}(X)$.

To see that the $K$-theory class of $[P]$ is also non-zero, we shall need some preparations. Set $W_m=Y_{m}\times Z$. The Roe algebra $C^*(W_m)$ can be seen as a subalgebra of $C^*(X)$. Moreover, $C^*(W_m)\cap I_G(Y)=\K(\ell^2(W_m),\H)$ since $Z$ has Property A. We define the algebra
$$\D=\frac{\prod_{m\in\IN}C^*(W_m)/\K}{\oplus_{m\in\IN}C^*(W_m)/\K}.$$
Set $C^*(X,Z)$ the geometric ideal generated by $Z$, then $C^*(X,Z)$ descends to an ideal of $C^*_{Y,\infty}(X)$ under the quotient map
$$\pi_{Y,\infty}: C^*(X)\to C^*_{Y,\infty}(X).$$
Then there exists a canonical $*$-homomorphism
$$\iota: \frac{C^*_{Y,\infty}(X)}{\pi_{Y,\infty}(C^*(X,Z))}\to \D$$
which is descent from the map $C^*(X)\to\prod_{m\in\IN}C^*(W_m)$ defined by $T\mapsto\oplus_{m\in\IN}\chi_{W_m}T\chi_{W_m}$. It is straightforward to check that this map is well-defined and induces a map on the level of $K$-theory
$$tr:=\iota_*\circ(\pi_{Y,\infty})_*: K_*(C^*_{Y,\infty}(X))\to K_*(\D)\cong \frac{\prod_{m\in\IN}K_*(C^*(W_m)/\K)}{\oplus_{m\in\IN}K_*(C^*(W_m)/\K)},$$
where the composition of these two maps is denoted by $tr$ as the ``trace'' map.
Since each $W_m$ is coarsely equivalent to $Z$, we can compute that $K_*(C^*(Z)/\K)\cong \frac{\prod_{n\in\IN}\IZ}{\oplus_{n\in\IN}\IZ}$. Denote by $[1]\in K_*(C^*(Z)/\K)$ the element $(1,1,1,\cdots)$. It is direct to check that $tr([P])=[[1],[1],\cdots]$ which is non-zero, thus $[P]\in K_0(C^*_{Y,\infty}(X))$ is never zero. Moreover, the element $[P]$ can never come from $K_*(\pi_{Y,\infty}(C^*(X,Z)))$ since the map $tr$ will take any element in this group to $0$. One can compare the process above with \cite[Lemma 13.3.8]{WY2020} and \cite[Theorem 5.5]{FSN2014}.

We have the following commuting diagram:
\[\begin{tikzcd}
\lim\limits_{d\to\infty}K_*(\pi_{Y,\infty}(C^*_{L,Z}(P_d(X)))) \arrow[d,"\mu"] \arrow[r,"i"] & \lim\limits_{d\to\infty}K_*(C^*_{L,Y,\infty}(P_d(X))) \arrow[r]\arrow[d,"\mu_{Y,\infty}"] & \lim\limits_{d\to\infty}K_*(C^*_{L,Y\cup Z,\infty}(P_d(X))) \arrow[d,"\mu_{Y\cup Z,\infty}","\cong"'] \\
K_*(\pi_{Y,\infty}(I_G(Z))) \arrow[r]           & K_*(C^*_{Y,\infty}(X))  \arrow[r,"\pi_Z"]     &   K_*(C^*_{Y\cup Z,\infty}(X))
\end{tikzcd}\]
Both the upper and the lower rows are exact; one should compare this with \cite{HLS2002}. The upper row is essentially an exact sequence on the classifying space for groupoids. The map $\mu_{Y\cup Z,\infty}$ is an isomorphism by Theorem \ref{thm: RCBC for RFCE spaces}. If $\mu_{Y,\infty}$ is surjective, then there should exist $a\in\lim_{d\to\infty}K_*(C^*_{L,Y,\infty}(P_d(X)))$ such that $\pi_{Y,\infty}(a)=[P]$. Notice that $\pi_Z([P])=0$, by diagram chasing, there should be some element $b\in \lim_{d\to\infty}K_*(\pi_{Y,\infty}(C^*_{L,Z}(P_d(X))))$ such that $i(b)=a$. However, as we discussed above, an element that comes from $C^*_{L,Z}(P_d(X))$ has to take an index in $K_*(C^*(X,Z))$, whose image under the map $tr$ should be $0$. This leads to a contradiction that $tr([P])\ne 0$.
\end{proof}

\bibliographystyle{alpha}
\bibliography{ref}

\end{document}